\theoremstyle{plain}
\newtheorem{thm}{Theorem}
\newtheorem{lem}{Lemma}
\newtheorem{prop}{Proposition}
\newtheorem{cor}{Corollary}
\newtheorem{fact}{Fact}
\theoremstyle{definition}
\theoremstyle{plain}
\newtheorem{Ass}{Assumption}
\newtheorem{rmq}{Remark}
\newcommand{\eps}{\ensuremath{\varepsilon}} 
\newcommand{\Op}{\ensuremath{\mathcal{L}}}
\newcommand{\Hyp}{\ensuremath{\mathbb{H}}}
\newcommand{\R}{\ensuremath{\mathbb{R}}}
\newcommand{\Prob}{\ensuremath{\mathbb{P}}}
\newcommand{\M}{\ensuremath{\mathcal{M}^{n}}}
\newcommand{\B}{\ensuremath{\mathcal{B}}}
\newcommand{\ind}{\ensuremath{\mathbf{1}}}
\newcommand{\E}{\ensuremath{\mathbb{E}}}
\begin{document}

\title{A Poincar\'e cone condition in the Poincar\'e group}
\author{Tardif Camille \footnote{\texttt{ctardif@math.unistra.fr}. Institut de Recherche Math\'ematique Avanc\'ee. Universit\'e de Strasbourg.} \\  }
\date{}

\maketitle

\abstract{ In \cite{B-A_Grad}, Ben Arous and Gradinaru described the singularity of the Green function of a general sub-elliptic diffusion. In this article we first adapt their proof to the more general context of a hypoelliptic diffusion. In a second time, we deduce a Wiener criterion and a Poincar\'e cone condition for Dudley's diffusion.}

Key words: Green function. Wiener test. Poincar\'e cone condition. Relativistic diffusion. Hypoelliptic operator.  
\tableofcontents
\section{Introduction}
In \cite{Dud66}, Dudley introduced a relativistic Brownian motion whose trajectories are time-like and which is invariant in law under Lorentz transformations. The space of states of this relativistic Brownian motion is the unit tangent bundle of the Minkowski spacetime, and the diffusion consists in a Brownian motion in $\Hyp^{d}$ (the hyperboloid model of the hyperbolic space) and its time integral in Minkowski spacetime $\R^{1,d}$. In the spirit of the Eells-Elworthy construction of the Brownian motion on $\Hyp^{d}$, Dudley's diffusion can be obtained by projecting a diffusion $(g_{t},\xi_{t})$ in the orthonormal frame bundle of $\R^{1,d}$. The point $\xi_{t}$ belongs to $\R^{1,d}$, while $g_{t}$ is an orthonormal frame at point $\xi_{t}$ for Minkowski metric. It is fortunate that this bundle has a group structure and the latter diffusion can be chosen to be left invariant in that group, with a generator of the form 
\begin{align}
\Op= \frac{\sigma^{2}}{2} \sum_{i=1}^{d} V_{i}^{2} + H_{0}, \label{operateur}
\end{align}
 satisfying the weak H\"ormander hypoellipticity condition (this means, in particular, that the drift $H_{0}$ is needed to generate a Lie algebra of maximal rank).

Intuitively, $\{ (g_{t}, \xi_{t}) \}_{t\geq 0}$ describes the timelike trajectory of a small rigid object in Minkowski spacetime and consists in a stochastic perturbation of a geodesic trajectory by random perturbation of its velocity. 
This Lorentzian analogue to the Euclidian Brownian motion, was studied by Bailleul in \cite{Bailleul-Raugi} where he determined its Poisson boundary by providing a comprehensive description of asymptotic behaviour. This study was completed in \cite{Tard12} where the Lyapunov spectrum of its flow was described. In the present work we are interested in describing, by means of a (local) Wiener criterion, the thinness of sets with respect to this diffusion. This kind of result is based on the knowledge of the geometry of the level sets of the Green function, and more generally on the knowledge of its behaviour on the diagonal.

For the case of a generic sub-elliptic diffusion with H\"ormander type generator 
\[
\frac{1}{2} \sum_{i=1}^{m} X_{i}^{2}
\]
Ben Arous and Gradinaru (\cite{B-A_Grad}) showed, following Nagel, Stein and Wainger (\cite{N-S-W}) but using stochastic tools, that the Green function is equivalent on the diagonal to the Green function of a Brownian motion on a free nilpotent Lie group. The estimates of the Green function they obtained allowed them to write a general Wiener criterion for such diffusions. 

The diffusion studied in this article, generated by \eqref{operateur}, is of parabolic type. The Green function is not explicit and we don't have estimates which allow us to describe the geometry of its level sets. Contrary to the previous sub-elliptic situation, results about estimates of general parabolic Green functions are not  known. We find in the literature some studies of particular examples. In \cite{Uchi89}, Uchiyama obtained a probabilistic proof of a Wiener criterion for the heat operator based on a fine study of the level sets of the Green function (which has an explicit expression in that case). More general results have been obtained by Garofalo \& al. ( \cite{GaroLanco88}, \cite{GaroSega90}, \cite{GaroLanco90}) in different parabolic  cases ( heat operator with variable coefficients in $\R^{n}$, heat operator in the Heisenberg group and Kolmogorov parabolic operator with constant coefficients). The proofs are based on either explicit expression of Green functions or strong Gaussian estimates. Recently, Menozzi \& al. (\cite{DelMen10}, \cite{KonMenMolch11}, \cite{Men11}) obtained, by a parametrix method, some Gaussian estimates of the Green function for general Kolmogorov parabolic type operators. Unfortunately their results cannot be used directly for Dudley's diffusion.  

In this article, we provide a weak Wiener criterion for $\{(g_{t},\xi_{t})\}_{t \geq 0}$, which concerns sets into some homogeneous cones and we deduce a Poincar\'e cone sufficient condition for thinness. For this we first refine the result of Ben Arous and Gradinaru \cite{B-A_Grad} to the case of generic diffusions with H\"ormander type generator
\[
\frac{1}{2}\sum_{i=1}^{m} X_{i}^{2} +X_{0}
\]
under the condition that the vector space spanned by the Lie brackets of at most $r$ of the vectors $X_{1}, \dots, X_{m}$ and $X_{0}$ have full rank everywhere. We prove in Section \ref{sec2} (Theorem \ref{thm1}) that the Green function of a such hypoelliptic diffusion is equivalent on the diagonal to the Green function a Brownian motion with a drift on a $r$-step free nilpotent Lie group. Some natural dilations on that group make it possible to express the Green function of this model situation in simple term. Theorem \ref{thm1} is far from giving a full description of the behaviour of the Green function on the diagonal for a general hypoelliptic diffusion, nevertheless we use it to obtain in Section \ref{sec3} a weak Wiener criterion and a Poincar\'e cone condition for $\{(g_{t},\xi_{t})\}_{t\geq0}$ in the Poincar\'e group.

\section{Estimates of the Green function for a general hypoelliptic diffusion}\label{sec2}

Let $X_{0}, X_{1}, \dots, X_{m}$ be smooth vector fields on a smooth connected manifold $\mathcal{M}$ of dimension $n \geq 3$ such that the Lie algebra generated by $X_{0}, X_{1}, \dots, X_{m}$has full rank everywhere:
\[
\forall x \in \mathcal{M}, \quad Lie(X_{0}, X_{1}, \dots, X_{m}) \vert_{x} = T_{x} \mathcal{M}. \label{H} \tag{H}
\]
Under this condition, H\"ormander \cite{Hor63} showed the hypoellipticity of the operator 
\[
\Op:= \frac{1}{2} \sum_{i=1}^{m} X_{i}^{2} + X_{0},
\]
and Bony (\cite{Bony69}) proved the existence of the Green function $G_{U}(\cdot, \cdot)$ for smooth bounded domain $U \subset \mathcal{M}$. Given $f\in C^{0}(U)$, the unique function $\phi \in C^{0}( \bar{U})$ such that 
\begin{align*}
\Op \phi &=f \quad \mathrm{on } \  U\\
 \phi&= 0 \quad \mathrm{on } \ \partial U,
\end{align*}
holds in the sense of distributions is given by
\[
\phi(x) = \int_{U} G_{U}(x,y) f(y)dy,
\]
for any $x \in U$.
Moreover, $G_{U}$ is a fundamental solution for $\Op^{*}= \frac{1}{2} \sum_{i=1}^{m} X_{i}^{*}X_{i}^{*} + X_{0}^{*}$; ( $\Op^{*} G_{U}(x,\cdot) = \delta_{x}$ in the sense of distribution). Thus $G_{U}$ is smooth off the diagonal and the singularity at $(x,x)$ does not depend of the domain $U$ (indeed, if $U' \supset U$ $G_{U'}(x,\cdot)-G_{U}(x,\cdot)$ is $\Op^{*}$-harmonic in $U$ and, by hypoellipticity, smooth at $x$). Fix $U$.
Let now $(B^{1}, \dots, B^{m})$ be a m-dimensional Brownian motion and consider the solution $(x_{t})$  of the Stratonovich equation
\begin{align*}
dx_{t}= \sum_{i=1}^{m} X_{i}(x_{t})\circ dB^{i}_{t} + X_{0}(x_{t})dt, \ \ x_{0}=x, \label{SDE}
\end{align*}
killed at the first exit time $\tau$ from $U$, \[ \tau:= \inf \{t >0, \ x_{t} \in U^{c} \}\]. The Green function $G$ is also the density of occupation time measure of $(x_{t})_{0\leq t \leq \tau}$: for every test function $f \in C^{\infty}_{c}(U)$,
\[
\E \left[ \int_{0}^{\tau} f(x_{t}) dt \right ] = \int_{U} f(y) G_{U}(x,y) dy. 
\] 

Under some assumptions concerning the geometry induced by the Lie brackets of the vector fields we prove the following result, which generalize to a general hypoelliptic diffusion a result of Ben Arous and Gradinaru (\cite{B-A_Grad}) proved under a stronger geometric assumption. 
\begin{thm}\label{thm1}
Given $x\in \mathcal{M}$ we have 
\[
\lim_{\eps \to 0} \sup_{\vert y \vert_{x} < \eps} \left \vert G_{U}(x,y) \vert y \vert_{x}^{Q(x)-2} - \frac{1}{J_{x}(0)}g^{(x)}\bigl (0, \theta_{x}(y) \bigr ) \right \vert=0 .
\]
\end{thm}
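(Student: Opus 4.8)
The plan is to adapt the strategy of Ben Arous and Gradinaru \cite{B-A_Grad}, replacing their equiregular sub-Riemannian hypothesis by the assumption that the brackets of length at most $r$ of $X_0,\dots,X_m$ (with $X_0$ counted with weight $2$) span the tangent space everywhere, and carrying the drift $X_0$ through the whole argument. Fix $x$ and work in privileged coordinates at $x$, in the sense of Nagel--Stein--Wainger \cite{N-S-W}; after a Rothschild--Stein lifting one may further arrange that the nilpotent approximation at $x$ is the $r$-step free nilpotent group $G^{(x)}$, with $\theta_x$, $g^{(x)}$ and $J_x$ obtained from the lifted picture by projecting back. In these coordinates the weights $w_1,\dots,w_n$ of the filtration give the anisotropic dilations $\delta_\lambda$, the quasi-norm $|\cdot|_x$, the homogeneous dimension $Q(x)=\sum_i w_i$, and a decomposition $X_i=\hat X_i+(\text{terms of higher weighted degree})$ with $\hat X_i$ homogeneous of degree $-1$ for $1\le i\le m$ and $\hat X_0$ of degree $-2$. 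The operator $\hat\Op:=\tfrac12\sum_{i=1}^m\hat X_i^2+\hat X_0$ generates the announced Brownian motion with drift on $G^{(x)}$; it is homogeneous of degree $-2$ under $\delta_\lambda$, and since $Q(x)\ge n\ge3$ the associated diffusion is transient, so its full-space Green function $g^{(x)}$ exists and satisfies $g^{(x)}(0,\delta_\lambda z)=\lambda^{2-Q(x)}g^{(x)}(0,z)$.

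Next I would rescale the diffusion. Let $(x_t)$ be the diffusion generated by $\Op$, started at $x$ and killed at the exit time $\tau$ from $U$ — which, since the singularity does not depend on $U$, we may take comparable to a quasi-ball centred at $x$ — and set $N^\lambda_s:=\delta_{1/\lambda}(x_{\lambda^2 s})$, killed at $\tau^\lambda:=\lambda^{-2}\tau$; it is generated by $\Op^\lambda:=\lambda^2(\delta_{1/\lambda})_*\Op=\tfrac12\sum_i(\lambda(\delta_{1/\lambda})_*X_i)^2+\lambda^2(\delta_{1/\lambda})_*X_0$. Combining the time change $t=\lambda^2s$ with the change of variables $y=\delta_\lambda z$ (of Jacobian $\prod_i\lambda^{w_i}=\lambda^{Q(x)}$) in the occupation-time identity for $G_U$, the occupation density of $N^\lambda$ equals $\lambda^{Q(x)-2}G_U(x,\delta_\lambda\,\cdot)$ up to the smooth factor $J_x(\delta_\lambda\,\cdot)$ coming from the density of the reference measure $dy$ with respect to the transported Haar measure of $G^{(x)}$, a factor that tends to $J_x(0)$ as $\lambda\to0$. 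Writing $\lambda=|y|_x$ and $\theta_x(y)=\delta_{1/|y|_x}(y)$, the statement of Theorem~\ref{thm1} becomes equivalent to the assertion that $\lambda^{Q(x)-2}G_U(x,\delta_\lambda z)\to J_x(0)^{-1}g^{(x)}(0,z)$ uniformly as $z$ ranges over the compact unit quasi-sphere $\{|z|_x=1\}$ and $\lambda\to0$; equivalently, that the occupation densities of $N^\lambda$ converge, uniformly on compact subsets of $G^{(x)}\setminus\{0\}$, to $J_x(0)^{-1}g^{(x)}(0,\cdot)$.

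This convergence rests on two ingredients. First, as $\lambda\to0$ the coefficients of $\Op^\lambda$ converge in $C^\infty_{\mathrm{loc}}$ to those of $\hat\Op$ — the defining property of the nilpotent approximation — and $\tau^\lambda\to\infty$ because $\delta_{1/\lambda}(U)$ invades $G^{(x)}$; hence $N^\lambda$ converges in law, on compact time intervals, to the non-explosive model diffusion $\hat N$, and on any fixed compact set the effect of the killing becomes asymptotically negligible. Second — and this is where the real difficulty lies — one must upgrade this weak convergence of paths to uniform convergence of occupation densities on punctured neighbourhoods of the pole. For this I would prove bounds uniform in $\lambda\in(0,1]$: a uniform upper estimate $G^\lambda(0,z)\le C\,\rho^\lambda(0,z)^{2-Q(x)}$ in terms of the control distance $\rho^\lambda$ of $\Op^\lambda$, a uniform lower estimate near the pole, and a uniform Harnack inequality and H\"older bound for $\Op^\lambda$-harmonic functions away from the pole. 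These follow from the uniform form of the Rothschild--Stein / Nagel--Stein--Wainger sub-elliptic estimates for the family $\{\Op^\lambda\}_{0<\lambda\le1}$: the constants there depend only on $r$, on finitely many $C^k$-norms of the $X_i$, and on a lower bound for the step-$r$ bracket-generating map, and it is precisely the bounded-step spanning assumption that keeps these quantities under control — locally uniformly in $x$ and, crucially, stable under the anisotropic rescaling. Granting them, an Arzel\`a--Ascoli argument on compacta of $G^{(x)}\setminus\{0\}$ shows that every subsequential limit of the occupation densities of $N^\lambda$ is $\hat\Op^{*}$-harmonic off $0$ with the correct behaviour at the pole (using the weak convergence of $N^\lambda$ to $\hat N$ and the vanishing of the killing), hence equals $J_x(0)^{-1}g^{(x)}(0,\cdot)$; this gives the claimed uniform convergence, and with it Theorem~\ref{thm1}. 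The principal obstacle is thus the second ingredient: establishing the sub-elliptic Green-function bounds and Harnack estimates uniformly in the scaling parameter $\lambda$, which is exactly the point where the strengthened, bounded-step geometric hypothesis is needed.
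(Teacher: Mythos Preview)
Your overall architecture matches the paper's: rescale the diffusion by the anisotropic dilations, identify the limit of the rescaled Green functions with the Green function $g^{(x)}$ of a model process on (a projection of) the $r$-step free nilpotent group, and conclude by restricting to the unit quasi-sphere. The scaling computation you sketch is exactly formula~\eqref{bidule}, and the reduction of Theorem~\ref{thm1} to a uniform convergence of $G^{(x,\eps)}(0,\cdot)$ on compacta of $\R^{n}\setminus\{0\}$ is the paper's Proposition~\ref{prop1}.

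Where you diverge is in how that uniform convergence is obtained. The paper does not argue via convergence of the coefficients of $\Op^{\lambda}$ and a uniform Harnack inequality; instead it uses Castell's stochastic Taylor formula (Proposition~\ref{taylor}) to write the rescaled diffusion as $v^{(x,\eps)}_{t}=u^{(x)}_{t}+\eps R^{(x)}(\eps,t)$, with an explicit tangent process $u^{(x)}_{t}$ built from iterated Stratonovich integrals and shown to be the projection of a left-invariant diffusion on $G_{(m,r)}$ (Proposition~\ref{tangent}). Equicontinuity and boundedness of the family $\{G^{(x,\eps)}(0,\cdot)\}$ come directly from the Nagel--Stein--Wainger derivative bounds (Proposition~\ref{propestimapriori}) after establishing the ball-box equivalence of Proposition~\ref{loceq}; Ascoli then reduces matters to weak convergence, which is handled by splitting the occupation-time integral at a large time $T$ (Facts~\ref{fact1}--\ref{fact3}). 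This route avoids any appeal to a uniform Harnack principle or to lower Green bounds.

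Two cautions about your variant. First, the ``uniform lower estimate near the pole'' you list is not available in this parabolic setting and is in fact false on part of the unit sphere: as the paper notes in Remark~1, when $X_{0}$ is needed for spanning, $g^{(x)}(0,\cdot)$ can vanish on an open set, so no two-sided estimate holds; fortunately you do not need a lower bound for the compactness/identification argument, only the upper bound and a derivative (or H\"older) estimate. Second, your identification of subsequential limits as ``$\hat\Op^{*}$-harmonic off $0$ with the correct behaviour at the pole'' is the delicate step; in the paper this is precisely what the probabilistic Taylor expansion buys, since it gives pathwise closeness of $v^{(x,\eps)}$ to the explicit model $u^{(x)}$ and hence convergence of the occupation measures themselves, not merely of the generators. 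If you stay on the PDE side you will need to argue carefully that weak-$*$ limits of the occupation densities have total singular mass exactly $\delta_{0}$, which is less immediate than in the symmetric sub-elliptic case.
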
 
The precise notations are explained later and the proof, which follows the pattern of Ben Arous and Gradinaru's work, is done in the appendix. Nevertheless, let us briefly present the underlying idea. The way how the $\{X_{i}\}_{i=0, \dots, m}$ generate $T_{x}\M$  yields a family of dilatations of $T_{x}\M$, denoted by $T_{\lambda}$ for $\lambda >0$. Then we consider the rescaled diffusions in $T_{x}\M \simeq \R^{n}$ defined by $v^{(x,\eps)}_{t}:= T_{1/\eps}(x_{\eps^{2}t})$. Then, using the stochastic Taylor formula of Castell \cite{Castell93}, we write the  Taylor expansion $v^{(x,\eps)}_{t}= u^{(x)}_{t}+ \eps R(\eps,t)$ where the remainder term $R(\eps,t)$ is bounded in probability as $\eps$ goes to $0$. We show that the \emph{tangent process} $u^{(x)}_{t}$ which appears in the first term of this Taylor expansion has a smooth density of occupation time measure, say $g^{(x)}$. Using estimates obtained by Nagel, Stein and Wainger in \cite{N-S-W} we prove the convergence of the Green functions $G_{U}^{(x,\eps)}$ of the rescaled diffusions $v^{(x,\eps)}$ towards $g^{(x)}$; Theorem \ref{thm1} follows. 

\begin{rmq}
In the sub-elliptic situation described in Ben Arous and Gradinaru's work, the Green function $g^{(x)}$ of the tangent process is strictly positive and, in this case, Theorem 1 provides a precise description of the singularity. In a parabolic situation, where the drift $X_{0}$ is needed to generate a Lie algebra of full rank, $g^{(x)}(0, \cdot)$ may be null in some domain. For a $y$ which tends to $x$ such that $\theta_{x}(y)$ is in this domain, Theorem 1 do not give the precise behavior of $G(x,y)$. 
\end{rmq}

As we shall exclusively work in a fixed bounded neighbourhood $U$ of $x$ we shall write $G$ for $G_{U}$, and without loss of generality we shall suppose the vector fields $X_{0},X_{1}, \dots, X_{m}$ globally bounded with bounded derivatives and there is no explosion of the solutions to the SDE considered below.

Now let introduce the notations of the theorem and the tangent process. 

\subsection{Triangular basis}

For every multi-index $J=(j_{1}, \dots, j_{l}) \in \{ 0, 1, \dots, m \}^{l}$ we denote by $X^{J}$ the vector field
\[
X^{J}:= [ X_{j_{1}}, [  X_{j_{2}}, [ \cdots  [ X_{j_{l-1}}, X_{j_{l}}]] \cdots ] ] ,
\]
and if $J=(j)$, $X^{J}:=X_{j}$.

Let denote by $\vert J \vert$ the length $l$ of $J$ and by $\Vert J \Vert$ the \emph{order} of $J$:
\[
\Vert J \Vert:= \vert J \vert + \textrm{number of} \ 0 \ \textrm{in} \ J.
\] 

For $x \in \mathcal{M}$ set
\[
C_{i}(x) := \mathrm{Vect} \left \{ X^{J}(x); \quad \Vert J \Vert \leq i  \right \}.
\]
By assumption \eqref{H} we can consider the smallest integer $r(x)$ such that $C_{r(x)}= T_{x} \mathcal{M}$.
We denote by $Q(x)$ \emph{the graded dimension} of $Lie(X_{0}, \dots, X_{m})$ at $x$:
\begin{align*}
 Q(x):= \sum_{i=1}^{r(x)} i\times(\dim C_{i}(x)-\dim C_{i-1}(x)).
\end{align*}

\begin{Ass}\label{geomconst}
We assume that the geometry of the Lie brackets is constant, this means that the $\dim C_{i}(x)$ are constant, thus $r$ and $Q$ are also constant.
\end{Ass}

Let $\B=(J_{1},\dots,J_{n})$ be a family of multi-indexes such that $(X^{J}(x))_{J\in \B}$ is \emph{a triangular basis} of $T_{x}\M$, that is, for $j\leq r$, $\{X^{J}(x); \ J \in \B, \ \Vert J \Vert \leq j\}$ is a basis of  $C_{j}(x)$. By the previous assumption we have also $(X^{J}(y))_{J\in \B}$ is a triangular basis of $T_{y}\mathcal{M}$ for $y$ in a neighbourhood of $x$. We suppose that the domain $U$ is in a such neighbourhood.

For any multi-index $L$ there exist smooth realvalued functions $a_{J}^{L}$ on $U$ such that we have
\[
X^{L}= \sum_{J \in \B} a_{J}^{L} X^{J}.
\]
Since $(X^{J})_{J \in \B}$ is a triangular basis, if $\Vert J\Vert > \Vert L \Vert$ then $a_{J}^{L}= 0$ in $U$. 

\begin{rmq}
We have chosen one particular n-tuplet $\B$ and this choice seems to be not intrinsic. Nevertheless, the following section we show that under the assumption \ref{geomconst} this choice does not affect the results. 
\end{rmq}

\subsection{Homogeneous norm and a priori estimates}

Given a vector field $X$ on $\mathcal{M}$, $\exp(X)(x)$ denote the solution at time $1$ of the differential equation:
\[ \left \{ \begin{aligned} \frac{du}{ds}&= X(u(s)) \\ u(0)&=x. \end{aligned} \right. \]

By a well-known result about the dependence of the solution of a differential equation with respect to its parameters, the map 
\[
\varphi_{x}: \ u \in \R^{n} \mapsto \exp \left ( \sum_{i=1}^{n} u_{i} X^{J_{i}} \right )(x) \in \M
\]
is smooth, and since $(X^{J}(x))_{J \in \B}$ is a basis we can find a neighbourhood $W$ of $0$ in $\R^{n}$ such that $\varphi_{x}$ is a smooth diffeomorphism from $W$ onto $\varphi_{x}(W)$. There is no loss of generality in supposing that $U \subset \varphi_{x}(W)$. We define the homogeneous norm of $y= \varphi_{x}(u) \in U$ setting
\[
\vert y \vert_{x}:= \left [ \sum_{k=1}^{r} \left (\sum_{i, \Vert J_{i} \Vert=k} u_{i}^{2} \right )^{\frac{Q}{2k}} \right ]^{\frac{1}{Q}}.
\]

During the proof of Theorem \ref{thm1} we will use homogenous norm at different points near of the reference point $x$. For this we need assumption \ref{geomconst} and, since $\varphi_{x}$ depends smoothly  on $x$ we can take $U$ small enough so that every map $\varphi_{y}^{-1}:U\to \varphi_{y}^{-1}(U)$, with $y\in U$, is a diffeomorphism (see Corollary 4.1 of \cite{N-S-W}). Thus, $\vert z\vert_{y}$ is well defined for $z, y \in U$. 

Also, for $u \in \R^{n}$, set $\vert u \vert_{n}:= \left [ \sum_{k=1}^{r} \left (\sum_{i, \Vert J_{i} \Vert=k} u_{i}^{2} \right )^{\frac{Q}{2k}} \right ]^{\frac{1}{Q}}$, and $\Vert u \Vert$ is the Euclidean norm of $u$. 

Now we introduce the \emph{dilatations} on $\R^{n}$ associated to the family of multi-index $\B$. For $\varepsilon >0$, set
\[
\begin{matrix}
T_{\eps}:&\R^{n} & \longrightarrow& \R^{n} \\
&u & \longmapsto & (\eps^{\Vert J_{i} \Vert} u_{i} )_{i=1,\dots, n}
\end{matrix}
\] 
The norm $\vert \cdot \vert_{n}$ is homogeneous with respect to the dilatations $T_{\eps}$; as  $\vert T_{\eps} u \vert_{n}= \eps \vert u \vert_{n}$.

For $y \in U$, denote by $\theta_{x}(y)$ the \emph{angular variable} ($ \vert \theta_{x}(y) \vert_{n}= 1$)
\[
\theta_{x}(y):= T_{1/ \vert y \vert_{x}} \circ \varphi_{x}^{-1} (y).
\]
\begin{rmq}
We can suppose $\B$ indexed in such a way that $(\Vert J_{i} \Vert)_{i=1,\dots,n}$ increase. If $\B'=(J'_{i})_{i=1,\dots, n}$ is an other family with this property, then $\Vert J_{i} \Vert =\Vert J'_{i} \Vert$ and the definition of the dilatations do not depend on the particular family $\B$ chosen. 
\end{rmq}

Let now introduce the results obtained by Nagel, Stein and Wainger in \cite{N-S-W} giving estimates on the Green function in terms of a geometric pseudo-metric $\rho$ associated with the vector fields $X_{0}, X_{1}, \dots, X_{m}$.

For $\delta >0$ denote by $C(\delta)$ the class of absolutely continuous maps $\phi: [0,1] \to U$ such that we can write 
\[
\phi'(t) = \sum_{ \Vert J \Vert \leq r} a_{J}(t) X^{J}(\phi(t))
\]
for almost all $t \in [0,1]$ with functions $a_{J}: [0,1] \to U$ such that $\vert a_{J}(t) \vert < \delta^{\Vert J \Vert}$. 

We define a pseudo-metric $\rho$ on $U$ setting 
\[
\rho(y,z)= \inf \left \{ \delta >0 \vert \quad \exists \phi \in C(\delta) \ \mathrm{such \ that} \ \phi(0)=y, \ \phi(1)=z \right \}
\]
for all $y,z \in U$. 
By Corollary p 114 of \cite{N-S-W}, provided $U$ is  small enough, there exists a constant $C>0$ such that 

\begin{align}
\vert G(y, z) \vert \leq C \frac{\rho^{2}(y,z)}{\mathrm{Vol} \left (B(y,\rho(y,z))\right )} \label{estimapriori} \\
\left \vert X_{j_{1}}\cdots X_{j_{k}}G(y,z) \right \vert  \leq \frac{\rho^{2-\Vert J \Vert }(y,z)}{\mathrm{Vol} \left (B(y,\rho(y,z))\right )} \label{estimapriori2}
\end{align}
holds for any $y,z \in U$, and for any multi-index  $ J=(j_{1},\dots, j_{k})$.

\begin{rmq}
In the case where $X_{0}=0$, Theorem 4 of \cite{N-S-W} ensures that $\rho$ is locally equivalent to the sub-Riemanian distance associated to $\{X_{i} \}_{i=1, \dots ,m}$.
\end{rmq}

In the following proposition we show that the homogeneous norm and the pseudo-metric are locally equivalent. The proof of this result, which is a sort of ``ball-box theorem'' and requires the assumption \ref{geomconst}, is given in appendix.

\begin{prop}\label{loceq}
Provided $U$ is small enough, there exist positive constants $C_{1}, C_{2}$ such that 
\begin{align}
\forall y,z \in U, \quad C_{1}\rho(y,z) \leq \vert z\vert_{y} \leq C_{2}\rho(y,z).
\end{align}
\end{prop}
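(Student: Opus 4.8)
The plan is to prove the two inequalities $C_1 \rho(y,z) \le |z|_y \le C_2 \rho(y,z)$ separately, treating this as a quantitative ``ball-box'' comparison. Throughout I would work in the chart $\varphi_y$ centered at a variable base point $y \in U$, using Assumption \ref{geomconst} to ensure that the triangular basis $(X^J(y))_{J\in\B}$, the diffeomorphisms $\varphi_y$, and the homogeneous norms $|\cdot|_y$ all vary smoothly and uniformly over $U$ once $U$ is taken small enough. The key point is that all the constants produced below can be made uniform in the base point $y$ by the compactness of $\bar U$ together with the smooth dependence on $y$ already invoked in the text.

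For the upper bound $|z|_y \le C_2\rho(y,z)$: given $\delta > \rho(y,z)$, pick $\phi \in C(\delta)$ with $\phi(0)=y$, $\phi(1)=z$. First I would show that $\phi$ stays in a controlled $|\cdot|_y$-ball. Writing $u(t) := \varphi_y^{-1}(\phi(t))$, the ODE $\phi'(t) = \sum_{\|J\|\le r} a_J(t) X^J(\phi(t))$ with $|a_J(t)| < \delta^{\|J\|}$ transfers, via the expansion $X^L = \sum_{J\in\B} a^L_J X^J$ and the triangularity property ($a^L_J = 0$ when $\|J\| > \|L\|$), to a differential inequality on the components $u_i$ of $u$ grouped by order. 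Concretely, if $u = (u_i)$ with $\|J_i\| = k$ collected into a block $u^{(k)}$, one gets $|\tfrac{d}{dt}u^{(k)}| \lesssim \sum_{j\ge k}\delta^{j}\cdot(\text{polynomial factors in }u)$, and a Gronwall-type argument on the rescaled quantities $u_i/\delta^{\|J_i\|}$ shows $\sum_{i,\|J_i\|=k}(u_i(t))^2 \lesssim \delta^{2k}$ for all $t$, hence $|z|_y = |u(1)|_n \lesssim \delta$. Letting $\delta \downarrow \rho(y,z)$ gives the claim. The smooth, uniform dependence of the $a^L_J$ on the base point keeps the implied constant independent of $y$.

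For the lower bound $\rho(y,z) \le C_1^{-1}|z|_y$: here I would exhibit an explicit path. Set $\lambda := |z|_y$ and let $w := \theta_y(z) = T_{1/\lambda}\circ\varphi_y^{-1}(z)$, so $|w|_n = 1$ and $\varphi_y^{-1}(z) = T_\lambda w$, i.e. the $i$-th coordinate is $\lambda^{\|J_i\|} w_i$. Define $\phi(t) := \varphi_y(T_\lambda(tw)) = \exp\bigl(\sum_i t \lambda^{\|J_i\|} w_i X^{J_i}\bigr)(y)$ — actually more carefully I would use the integral-curve/piecewise definition $\phi(t) = \varphi_y(t\lambda^{\|J_1\|}w_1,\dots,t\lambda^{\|J_n\|}w_n)$ directly from the definition of $\varphi_y$. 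Then $\phi(0) = y$, $\phi(1) = z$, and $\phi'(t) = \sum_i \lambda^{\|J_i\|} w_i \,(d\varphi_y)(\partial_i)$; since $\varphi_y$ is built from the flows of $\sum u_i X^{J_i}$, one has $(d\varphi_y)(\partial_i)|_{u=T_\lambda(tw)}$ equal to $X^{J_i}(\phi(t))$ up to terms that, when re-expressed in the triangular basis at $\phi(t)$, only bring in brackets $X^J$ with $\|J\|$ no smaller than $\|J_i\|$, with smooth bounded coefficients on $U$. Collecting, $\phi'(t) = \sum_{\|J\|\le r} a_J(t) X^J(\phi(t))$ with $|a_J(t)| \le K \lambda^{\|J\|}$ for a uniform constant $K$ (using $\lambda \le \mathrm{diam}\,U$ small to absorb cross-order terms), so $\phi \in C(K\lambda)$ and $\rho(y,z) \le K\lambda = K|z|_y$.

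The main obstacle is the second (lower-bound) direction: one must verify that the naive radial path in the $\varphi_y$-chart really lies in the class $C(\delta)$ with $\delta$ comparable to $\lambda$, which requires a careful bookkeeping of how $d\varphi_y$ acts on the coordinate vector fields and a controlled re-expansion in the triangular basis at the moving point $\phi(t)$ — this is exactly where the triangularity relation $a^L_J=0$ for $\|J\|>\|L\|$ and the smallness of $U$ are essential, and where the uniformity of all constants in the base point $y$ must be checked via Assumption \ref{geomconst} and Corollary 4.1 of \cite{N-S-W}. The Gronwall step in the upper bound is comparatively routine once the differential inequality is set up. I would also remark that estimates \eqref{estimapriori}, \eqref{estimapriori2} are not needed for this proposition itself; it is a purely geometric comparison of the two notions of ``size'' near the diagonal.
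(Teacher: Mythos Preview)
Your two directions are correct in spirit but you have their difficulty exactly reversed, and the genuinely hard step is glossed over.

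The direction $C_1\rho(y,z)\le |z|_y$ (your ``lower bound'') is in fact immediate: the curve $\phi(t)=\varphi_y\bigl(t\,\varphi_y^{-1}(z)\bigr)$ is, by the very definition of $\varphi_y$ as a time-$1$ exponential, the integral curve of the \emph{constant}-coefficient field $\sum_i (\varphi_y^{-1}(z))_i\,X^{J_i}$, so $\phi'(t)=\sum_i c_i X^{J_i}(\phi(t))$ with $|c_i|\le |z|_y^{\|J_i\|}$. There are no ``up to terms'' corrections here; your worry about re-expanding $(d\varphi_y)(\partial_i)$ at the moving point is unnecessary. This is precisely the paper's easy step: it introduces the intermediate distance $d_{\B}$ built from the class $C_3(\delta,\B)$ of exponential curves with constant coefficients, observes $d_{\B}(y,z)=\max_i|c_i|^{1/\|J_i\|}$ so that $d_{\B}\asymp|\cdot|_y$, and notes the trivial inclusion $C_3(\delta,\B)\subset C(\delta)$ to get $\rho\le d_{\B}$.

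The direction $|z|_y\le C_2\rho(y,z)$ is the substantive one, and your Gronwall sketch hides the real obstacle. After reducing (via triangularity) to coefficients along $(X^{J})_{J\in\B}$, you must control $\dot u_j=\sum_i b_i(t)\,A_{ij}(u)$ where $A_{ij}(u)$ is the $\partial_{u_j}$-component of $(\varphi_y^{-1})_*X^{J_i}$. For the rescaled Gronwall to close you need $A_{ij}(u)=O(|u|_n^{\|J_j\|-\|J_i\|})$ whenever $\|J_j\|>\|J_i\|$; this homogeneity of the pushforward fields in exponential coordinates is exactly the nontrivial content, it is not established by the triangularity relation $a^L_J=0$ alone, and even granting it the argument is a bootstrap (you need $|u(t)|_n\lesssim\delta$ to invoke the estimate you are trying to prove). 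The paper does not attempt this directly: it instead verifies the determinant hypothesis
\[
|\lambda_{\B}(y)|\,\delta^{\|\B\|}\ \ge\ t\sup_{\B'}|\lambda_{\B'}(y)|\,\delta^{\|\B'\|}
\]
(using that $(X^J)_{J\in\B}$ is a \emph{triangular} basis, so $\|\B\|\le\|\B'\|$ whenever $\lambda_{\B'}\neq 0$) and then invokes Lemma~2.16 of \cite{N-S-W}, which packages precisely the delicate comparison $B(y,\eta\varepsilon\delta)\subset B_{\B}(y,\varepsilon\delta)$ you would otherwise have to reproduce.

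In short: your exponential-path argument is fine and matches the paper; your Gronwall step needs the homogeneous structure of the pulled-back frame, which is the real work and is what the paper outsources to \cite{N-S-W}.
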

\begin{rmq}
This result shows in particular that if $\vert \cdot \vert'_{\cdot }$ is the family of homogeneous norm associated to a $n$-tuple $\B'$ such that $(X^{L})_{L \in \B'}$ is a triangular basis then $\vert \cdot \vert_{\cdot}$ is locally equivalent to $\vert \cdot \vert'_{\cdot}$. Thus, the particular choice of basis $\B$ does not matter. 
\end{rmq}

We deduce from the previous proposition and estimates \eqref{estimapriori} and \eqref{estimapriori2} of \cite{N-S-W} the following \emph{a priori} estimates. The proof is given in appendix.
\begin{prop}\label{propestimapriori}
There is a constant $C>0$ such that
\begin{align*}
\forall y \neq z \in U, \ \vert G(y,z) \vert \leq \frac{C}{\vert z \vert_{y}^{Q-2}}.
\end{align*}
Moreover for any integer $k$, for any multi-index $J=(j_{1},\dots, j_{k})\in \{0, \dots, m \}^k$, and for any $y\neq z$ in $U$ 
\begin{align*}
\vert X_{j_{1}} \cdots X_{j_{k}} G(y,z) \vert \leq \frac{C}{\vert z \vert^{Q-2+\Vert J \Vert }_{y}}.
\end{align*}
\end{prop}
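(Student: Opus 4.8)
The plan is to read both inequalities directly off the Nagel--Stein--Wainger estimates \eqref{estimapriori} and \eqref{estimapriori2}, after rewriting their right-hand sides in terms of the homogeneous norm. Two ingredients feed into this: the ``ball-box'' equivalence $\rho(y,z)\asymp\vert z\vert_{y}$ of Proposition \ref{loceq}, and a uniform estimate of the form $\mathrm{Vol}\bigl(B(y,\delta)\bigr)\asymp\delta^{Q}$ for the metric balls appearing in the denominators.

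For the volume estimate I would invoke the description of $\mathrm{Vol}\bigl(B(y,\delta)\bigr)$ in \cite{N-S-W}: for $y\in U$ and $\delta$ small it is comparable to $\Lambda(y,\delta):=\sum_{I}\bigl\vert\det\bigl(X^{I_{1}}(y),\dots,X^{I_{n}}(y)\bigr)\bigr\vert\,\delta^{\Vert I_{1}\Vert+\cdots+\Vert I_{n}\Vert}$, the sum running over $n$-tuples $I=(I_{1},\dots,I_{n})$ of multi-indexes. Under Assumption \ref{geomconst} the chosen triangular basis $\B$ realises the minimal exponent $\sum_{i}\Vert J_{i}\Vert=Q$, and (shrinking $U$ if necessary) the corresponding determinant is bounded away from $0$ on $U$, while every other $n$-tuple contributes an exponent $\geq Q$; consequently there are constants $0<c\leq C'$, \emph{independent of the base point} $y\in U$, with $c\,\delta^{Q}\leq\mathrm{Vol}\bigl(B(y,\delta)\bigr)\leq C'\,\delta^{Q}$ for all $y\in U$ and all $\delta\leq\mathrm{diam}(U)$. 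The uniformity in $y$ is exactly what the constant-geometry hypothesis provides.

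Now fix $y\neq z$ in $U$ and put $\delta=\rho(y,z)$. By Proposition \ref{loceq}, $\rho(y,z)\asymp\vert z\vert_{y}$, hence $\rho(y,z)^{\alpha}\asymp\vert z\vert_{y}^{\alpha}$ for every fixed real exponent $\alpha$ (with constants depending only on $\alpha$ and on $C_{1},C_{2}$), and by the previous paragraph $\mathrm{Vol}\bigl(B(y,\rho(y,z))\bigr)\asymp\rho(y,z)^{Q}\asymp\vert z\vert_{y}^{Q}$. Substituting into \eqref{estimapriori} gives
\[
\vert G(y,z)\vert\leq C\,\frac{\rho^{2}(y,z)}{\mathrm{Vol}\bigl(B(y,\rho(y,z))\bigr)}\leq C''\,\frac{\vert z\vert_{y}^{2}}{\vert z\vert_{y}^{Q}}=\frac{C''}{\vert z\vert_{y}^{Q-2}},
\]
and the same substitution in \eqref{estimapriori2} gives
\[
\vert X_{j_{1}}\cdots X_{j_{k}}G(y,z)\vert\leq\frac{\rho^{\,2-\Vert J\Vert}(y,z)}{\mathrm{Vol}\bigl(B(y,\rho(y,z))\bigr)}\leq C'''\,\frac{\vert z\vert_{y}^{\,2-\Vert J\Vert}}{\vert z\vert_{y}^{Q}}=\frac{C'''}{\vert z\vert_{y}^{\,Q-2+\Vert J\Vert}},
\]
which are the two asserted bounds. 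The only delicate point — and the one I would treat carefully — is the uniformity of all constants over $y\in U$: the volume comparison of \cite{N-S-W}, the constants $C_{1},C_{2}$ of Proposition \ref{loceq}, and the validity of the volume estimate in the relevant range of radii must all hold with $y$-independent constants. This is secured by Assumption \ref{geomconst} together with the earlier reduction in which $U$ is taken small enough that $\varphi_{y}^{-1}$ is a diffeomorphism for every $y\in U$ (Corollary 4.1 of \cite{N-S-W}) and $U$ is bounded; since $Q\geq n\geq 3$, the exponent $Q-2$ is positive and the estimates are genuine singularity bounds near the diagonal.
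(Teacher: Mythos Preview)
Your argument is correct and follows the same overall strategy as the paper: reduce the Nagel--Stein--Wainger estimates \eqref{estimapriori}--\eqref{estimapriori2} to the desired form by showing $\mathrm{Vol}\bigl(B(y,\rho(y,z))\bigr)\geq C\,\vert z\vert_{y}^{Q}$ uniformly in $y\in U$ and then invoking the equivalence $\rho\asymp\vert\cdot\vert_{\cdot}$ of Proposition~\ref{loceq}. The only difference lies in how the volume lower bound is obtained. You appeal to the full NSW volume theorem $\mathrm{Vol}(B(y,\delta))\asymp\Lambda(y,\delta)$ and then argue, via Assumption~\ref{geomconst}, that the minimal exponent in $\Lambda$ is $Q$ with determinant bounded away from zero. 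The paper instead stays closer to what has already been set up: from Proposition~\ref{loceq} one has the ball inclusion $B(y,\rho(y,z))\supset B_{\B}(y,C'\vert z\vert_{y})$, and the volume of the latter is computed directly by the change of variables $\tilde z=\varphi_{y}(u)$, giving $\int_{\vert u\vert_{n}\leq C'\vert z\vert_{y}}\vert\mathrm{Jac}\,\varphi_{y}(u)\vert\,du\geq C''\int_{\vert u\vert_{n}<C'\vert z\vert_{y}}du=C\,\vert z\vert_{y}^{Q}$, using $\inf_{y,u}\vert\mathrm{Jac}\,\varphi_{y}(u)\vert>0$ and the homogeneity $d(T_{\eps}u)=\eps^{Q}du$. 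Your route is perfectly valid but imports a deeper NSW result than is strictly needed; the paper's route is more self-contained once Proposition~\ref{loceq} is in hand, and also makes transparent that only the \emph{lower} volume bound is required (since $\mathrm{Vol}$ sits in the denominator).
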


Another consequence of the local equivalence between $\rho$ and $\vert \cdot \vert_{\cdot}$ is the following result needed in the proof of Theorem \ref{thm1}. It is a direct consequence of Proposition \ref{loceq} and Proposition (1.1) p 107 and (iii') p 109 in \cite{N-S-W}.
\begin{prop}[Triangular inequality and comparaison with a Euclidian norm]\label{prop4}
For any small enough compact set $U$, we can find a positive constant $c_{0}$ such that for any $ w,y,z \in U$:
\begin{align}
 \vert y \vert_{w} \leq c_{0}\left ( \vert z\vert_{w} + \vert z \vert_{y} \right ). \label{intri}\end{align}
Moreover, there exist positive constants $c',c''$ such that for any $y, z \in U$
\begin{align}
 c' \Vert \varphi^{-1}_{y}(z)\Vert \leq \vert z\vert_{y}\leq c'' \Vert \varphi^{-1}_{y}(z) \Vert^{1/r} \label{compnormeucli}.
 \end{align}
\end{prop}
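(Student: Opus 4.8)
The plan is to derive both inequalities from Proposition \ref{loceq} together with the cited estimates from \cite{N-S-W}. For the triangular inequality \eqref{intri}, I would start from the fact that $\rho$ is a genuine pseudo-metric, hence satisfies $\rho(w,y) \leq \rho(w,z) + \rho(z,y)$ for all $w,y,z \in U$ (this is immediate from the definition of $\rho$ via concatenation of paths in $C(\delta)$: if $\phi_1 \in C(\delta_1)$ joins $w$ to $z$ and $\phi_2 \in C(\delta_2)$ joins $z$ to $y$, a suitably reparametrised concatenation lies in $C(\delta_1 + \delta_2)$, or one argues with $\max$ and adjusts constants). Then I invoke Proposition \ref{loceq} three times: $\vert y \vert_w \leq C_2 \rho(w,y) \leq C_2\bigl(\rho(w,z) + \rho(z,y)\bigr) \leq \tfrac{C_2}{C_1}\bigl(\vert z \vert_w + \vert y \vert_z\bigr)$. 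Replacing $\vert y \vert_z$ by $\vert z \vert_y$ requires knowing that $\vert \cdot \vert_\cdot$ is quasi-symmetric; but this too follows from Proposition \ref{loceq} and the (quasi-)symmetry of $\rho$, which is Proposition (1.1) p.\ 107 of \cite{N-S-W}. Setting $c_0 := C_2/C_1$ times the symmetry constant gives \eqref{intri}.

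For the comparison \eqref{compnormeucli} with the Euclidean norm, the key input is statement (iii') p.\ 109 of \cite{N-S-W}, which compares the pseudo-metric balls $B(y,\delta)$ with Euclidean balls: there are constants so that $B_{\mathrm{eucl}}(y, c\,\delta^r) \subset B(y,\delta) \subset B_{\mathrm{eucl}}(y, C\delta)$ for small $\delta$, uniformly in $y \in U$. Unwinding this in terms of the coordinate chart $\varphi_y$ (note $\varphi_y^{-1}(z)$ is the Euclidean representative of $z$ in the chart at $y$, and $\varphi_y$ has derivative bounded above and below uniformly on the compact $U$ by Assumption \ref{geomconst} and smooth dependence on the base point), one gets $\rho(y,z) \leq c\,\Vert \varphi_y^{-1}(z)\Vert^{1/r}$ on one side and $\rho(y,z) \geq c\,\Vert \varphi_y^{-1}(z)\Vert$ on the other, for $z$ close enough to $y$. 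Feeding these into Proposition \ref{loceq} once more converts $\rho$ into $\vert z \vert_y$ and yields the two inequalities $c' \Vert \varphi_y^{-1}(z)\Vert \leq \vert z \vert_y \leq c'' \Vert \varphi_y^{-1}(z)\Vert^{1/r}$, after shrinking $U$ so that the estimates hold for all pairs $y,z \in U$ (not just those with $z$ near $y$ — for $z$ bounded away from $y$ the inequalities hold trivially by compactness after adjusting constants, since both sides are then bounded above and below).

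The main obstacle I anticipate is bookkeeping with the base-point dependence: all the \cite{N-S-W} estimates are stated at a fixed base point, and here one needs them uniformly over $y$ ranging in $U$. This is exactly where Assumption \ref{geomconst} (constancy of the graded dimensions, hence of $r$ and $Q$) and the smooth dependence of $\varphi_y$ on $y$ — already used, per the remark after Proposition \ref{loceq}, to make $\vert z \vert_y$ well defined for all $y,z \in U$ — do the work. One has to shrink $U$ a finite number of times so that: $\varphi_y$ is a diffeomorphism with uniformly bounded (above and below) Jacobian for every $y \in U$; the \cite{N-S-W} corollaries p.\ 114 and Proposition (1.1) p.\ 107 apply uniformly; and the volume-doubling estimates used implicitly hold with a uniform constant. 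Once this uniformity is in place, everything else is a mechanical chain of the three cited comparisons. Since the excerpt explicitly says this is "a direct consequence" of Proposition \ref{loceq} and the quoted \cite{N-S-W} statements, I would keep the write-up short, citing \cite{N-S-W} for the pseudo-metric facts and Proposition \ref{loceq} for the passage to the homogeneous norm, and only spell out the concatenation argument for the triangle inequality and the unwinding of (iii') in the chart $\varphi_y$.
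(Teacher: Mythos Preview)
Your proposal is correct and matches the paper's approach exactly: the paper gives no proof beyond the one-line remark that the proposition ``is a direct consequence of Proposition \ref{loceq} and Proposition (1.1) p 107 and (iii') p 109 in \cite{N-S-W}'', and your write-up is a faithful unpacking of precisely that chain of implications (quasi-metric properties of $\rho$ from \cite{N-S-W}, ball comparison from (iii'), and passage to $\vert\cdot\vert_\cdot$ via Proposition \ref{loceq}). Your attention to the uniformity in the base point and to the quasi-symmetry step is appropriate and is implicitly covered by the cited \cite{N-S-W} results together with Assumption \ref{geomconst}.
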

\subsection{Taylor expansion and tangent process}

The underlying idea in Theorem \ref{thm1} is to use the dilations $T_{\eps}$ to zoom on the trajectories of the diffusion $x_{t}$. For this we introduce the \emph{rescaled diffusions} 
\[
 v^{(x,\eps)}_{t}:= T_{1/\eps} \circ \varphi_{x}^{-1} (x^{\eps}_{t}),
\]
where $x^{\eps}_{t}$ is a diffusion solution of 
\[
d x^{\eps}_{t} = \eps \sum_{i=1}^{m} X_{i} ( x^{\eps}_{t}) \circ d B^{i}_{t} + \eps^{2} X_{0}(x^{\eps}_{t}) dt, \quad x^{\eps}_{0}=x,
\]
and has the same law as $x_{\eps^{2}t}$. 
These are diffusions with values in the neighbourhood $\tilde{U}^{\eps}:= T_{1/\eps} \circ \varphi_{x}^{-1}(U)$ of $0 \in \R^{n}$, defined up to the exit time $\tau_{\eps}:= \inf\{t>0, v^{(x,\eps)}_{t} \notin \tilde{U}^{\eps} \}$ of $\tilde{U}^{\eps}$. Note that $\tau_{\eps}$ has the same law as $\tau/ \eps^{2}$ where $\tau$ is the exit time for $x_{t}$ from $U$. In order to write a Taylor expansion of $v_{t}^{(x,\eps)}$ for $\eps$ near $0$, we introduce notations taken from \cite{Castell93}. For a multi-index $J=(j_{1}, \dots, j_{l}) \in \{0,\dots, m  \}^{l}$ we denote by $B^{J}_{t}$ the Stratonovitch iterated integral
\[
B_{t}^{J}:=\int_{\Delta_{t}^{l}} \circ dB_{t_{1}}^{j_{1}}\dots \circ dB_{t_{l}}^{j_{l}}, 
\]
where $\Delta_{t}^{l}=\{(t_{1}, \dots, t_{l}); \ 0 <t_{1}< \dots < t_{l}< t \}$ and $B^{0}_{t}=t$.
For $\sigma$ a permutation of $\{1, \dots, l \}$, we set $J\circ \sigma= (j_{\sigma(1)}, \dots, j_{\sigma(l)})$ and denote by $e(\sigma)= \mathrm{Card}\{ j \in \{1, \dots, l-1\}; \sigma(j) > \sigma(j+1) \}$ the number of errors in ordering $\sigma$. Then, we denote by \emph{$c_{t}^{J}$ the linear combination of Stratonovich iterated integrals}
\[
c_{t}^{J}:= \sum_{\sigma \in \mathfrak{S}_{\vert J \vert}}\frac{(-1)^{e(\sigma)}}{\vert J\vert^{2} C_{\vert J \vert -1}^{e(\sigma)}}B_{t}^{J\circ \sigma^{-1}}.
\] 
Now let introduce the \emph{tangent process} $u_{t}^{(x)}$, which is a $\R^{n}$-valued process defined in terms of the Stratonovich integrals by:
\[
u_{t}^{(x)}:= \left ( \sum_{L, \Vert L \Vert = \Vert J_{i} \Vert} a^{L}_{J_{i}}(x) c_{t}^{L} \right)_{i=1,\dots,n}.
\]
This process is not intrinsic and depends on the choice of the $n$-tuple $\B$. In general it is not a diffusion but we will show that it is the projection in $\R^{n}$ of a diffusion in some bigger space. It is the first term in the Taylor expansion of $v^{(x,\eps)}_{t}$ as $\eps$ goes  to $0$. More precisely, using the results of Castell \cite{Castell93} we obtain the following proposition proved in appendix. 
\begin{prop}\label{taylor}
Set
\[
T^{\eps}_{U}:= \inf \left \{ t > 0, \quad \exp \left ( \sum_{L, \Vert L \Vert \leq r} \eps^{\Vert L \Vert} c_{t}^{L} X^{L}\right )(x) \notin U \right \}.
\]

For all $\eps >0$ and $t \leq \tau_{\eps} \wedge T^{\eps}_{U} $ we define $R^{(x)}(\eps,t)$ by the formula
\[
v^{(x,\eps)}_{t} = u_{t}^{(x)} + \eps R^{(x)}(\eps,t).
\]
Then $R^{(x)}(\eps, t)$ is bounded in probability; more precisely for $T>0$ fixed, there exist positive constants $\alpha $ and $c$ such that 
\[ 
\forall R > c, \quad 
\lim_{\eps \to 0} \Prob \left ( \sup_{0\leq s\leq T} \Vert R^{(x)}(\eps,s)  \Vert >R, \ T < \tau_{\eps} \wedge T_{U}^{\eps}  \right ) \leq \exp \left( -\frac{R^{\alpha}}{cT}\right ).
\]
\end{prop}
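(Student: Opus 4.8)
The plan is to apply Castell's stochastic Taylor formula (\cite{Castell93}) to the flow $x^{\eps}_{t}$ and then track how the dilation $T_{1/\eps}\circ\varphi_{x}^{-1}$ interacts with the homogeneity structure carried by the orders $\Vert J\Vert$. First I would recall Castell's expansion: writing the SDE for $x^{\eps}_{t}$ with the small parameters $\eps$ in front of the $X_{i}$'s and $\eps^{2}$ in front of $X_{0}$, the solution at time $t$ (before the auxiliary exit time $T^{\eps}_{U}$) can be written exactly as
\[
x^{\eps}_{t}=\exp\!\left(\sum_{L,\ \Vert L\Vert\le r}\eps^{\Vert L\Vert}c_{t}^{L}X^{L}+ r_{\eps}(t)\right)(x),
\]
where the extra term $r_{\eps}(t)$ collects all Lie brackets of order $>r$ and, by Castell's estimates, has a coefficient $O(\eps^{r+1})$ in front of iterated integrals, with the remainder enjoying sub-exponential tail bounds uniformly on compacts $[0,T]$. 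Note the scaling introduced through $x^\eps_t$ is exactly what makes a bracket $X^L$ come with the weight $\eps^{\Vert L\Vert}$ rather than $\eps^{|L|}$: each Brownian differential contributes an $\eps$ and each $dt$ contributes an $\eps^{2}$, i.e. an $\eps$ per unit of order.

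Next I would push this through the chart. Using the expansion $X^{L}=\sum_{J\in\B}a^{L}_{J}(x)X^{J}+(\text{lower-order corrections near }x)$ and the fact (triangularity) that $a^{L}_{J}=0$ unless $\Vert J\Vert\le\Vert L\Vert$, one regroups the exponent coordinate by coordinate in the basis $(X^{J_{i}})$. Composing with $\varphi_{x}^{-1}$ (which is, to first order at $0$, the identification of $T_{x}\M$ with $\R^{n}$ via this basis) and then applying $T_{1/\eps}$, the $i$-th coordinate of $v^{(x,\eps)}_{t}$ becomes $\eps^{-\Vert J_{i}\Vert}$ times a sum of terms $\eps^{\Vert L\Vert}a^{L}_{J_{i}}(x)c^{L}_{t}$ plus higher-order garbage. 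The terms with $\Vert L\Vert=\Vert J_{i}\Vert$ survive the rescaling with weight $\eps^{0}$ and assemble precisely into $u^{(x)}_{t}$; every other contribution — those from $\Vert L\Vert>\Vert J_{i}\Vert$, the curvature corrections to $\varphi_{x}$, and Castell's remainder $r_{\eps}$ — carries at least one extra power of $\eps$ after the dilation, because the dilation amplifies by at most $\eps^{-\Vert J_{i}\Vert}\le\eps^{-r}$ while these terms start at order $\ge\Vert J_{i}\Vert+1$ or $\ge r+1$. This is what defines $R^{(x)}(\eps,t)$ and what makes the prefactor exactly $\eps$.

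The final and most delicate step is the tail estimate. Here I would invoke the quantitative part of Castell's theorem: the iterated integrals $c^{L}_{t}$ and the remainder in her formula satisfy, on $\{T<\tau_{\eps}\wedge T^{\eps}_{U}\}$, large-deviation-type bounds of the form $\Prob(\sup_{s\le T}|\cdot|>R)\le\exp(-R^{\alpha}/(cT))$ with $\alpha$ depending only on the length $r$ of the expansion (essentially $\alpha=2/r$, coming from the scaling of $\sup_{s\le T}|B^{J}_{s}|$ for $|J|$-fold iterated Brownian integrals). Since $R^{(x)}(\eps,t)$ is a finite $\R$-linear combination (with bounded, $x$-dependent coefficients, uniformly bounded as $U$ is compact) of such iterated integrals and of Castell's remainder, divided by $\eps$ and multiplied by powers $\eps^{\Vert L\Vert-\Vert J_{i}\Vert}\le 1$, the same sub-exponential bound survives, uniformly in $\eps\in(0,1]$ and in $t\le T$. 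I expect the main obstacle to be precisely the bookkeeping here: one must make sure that (i) the event $\{T<\tau_{\eps}\wedge T^{\eps}_{U}\}$ genuinely keeps all the relevant quantities in a fixed compact chart so that the coefficients $a^{L}_{J}$ and the derivatives of $\varphi_{x}$ stay bounded, and (ii) the dilation $T_{1/\eps}$ never multiplies a term by more than $\eps^{-r}$ while that term is genuinely of order $\ge r+1$ in $\eps$ (or $\ge\Vert J_{i}\Vert+1$ in the relevant coordinate), so that the net power of $\eps$ in $R^{(x)}(\eps,t)$ is nonnegative and the extra $\eps$ factored out in the statement is legitimate. Once this accounting is in place, the probabilistic estimate is a direct transcription of Castell's bound with an adjusted constant $c$, and the constant $\alpha$ depends only on $r$.
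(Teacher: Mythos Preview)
Your overall strategy matches the paper's: apply Castell's expansion to $x^{\eps}_{t}$, pass through the chart $\varphi_{x}^{-1}$, apply the dilation $T_{1/\eps}$, and sort the contributions by powers of $\eps$. The tail bound is, as you say, a direct transcription of Castell's estimates (Properties P1 and P2 in \cite{Castell93}), applied to a finite linear combination of iterated integrals and the remainder.

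There is, however, one genuine gap. You assert that ``the curvature corrections to $\varphi_{x}$ \dots\ start at order $\ge\Vert J_{i}\Vert+1$'' but never justify it, and this is precisely the non-obvious point. Concretely: the map whose Taylor expansion you need is $\psi:=\varphi_{x}^{-1}\circ\psi_{x}$, where $\psi_{x}((v_{L}))=\exp(\sum_{L}v_{L}X^{L})(x)$. A $p$-th order Taylor term of the $J$-th coordinate of $\psi$ looks like
\[
\frac{\partial^{p}\psi_{J}}{\partial v_{L_{1}}\cdots\partial v_{L_{p}}}(0)\;\prod_{j}\bigl(\eps^{\Vert L_{j}\Vert}c_{t}^{L_{j}}\bigr),
\]
so after dilation by $\eps^{-\Vert J\Vert}$ it contributes at order $\eps^{\sum_{j}\Vert L_{j}\Vert-\Vert J\Vert}$. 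Triangularity ($a^{L}_{J}=0$ for $\Vert L\Vert<\Vert J\Vert$) handles only the \emph{first} derivatives ($p=1$). For $p\ge 2$ there is no a priori reason why a mixed derivative with $\sum_{j}\Vert L_{j}\Vert\le\Vert J\Vert$ should vanish; if it did not, that term would survive the dilation with a \emph{nonpositive} power of $\eps$ and ruin the expansion.

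The paper closes this by a short but essential lemma: one computes
\[
\frac{\partial^{p}\psi_{J}}{\partial v_{L_{1}}\cdots\partial v_{L_{p}}}(0)=\sum_{\sigma\in\mathfrak{S}_{p}}X^{L_{\sigma(1)}}\cdots X^{L_{\sigma(p-1)}}\bigl(a_{J}^{L_{\sigma(p)}}\bigr)(x),
\]
and then observes that, under Assumption~\ref{geomconst} (constant geometry of the bracket filtration on $U$), triangularity holds not only at $x$ but \emph{identically on $U$}: $a^{L}_{J}\equiv 0$ whenever $\Vert L\Vert<\Vert J\Vert$. Hence every summand above vanishes as soon as $\sum_{j}\Vert L_{j}\Vert\le\Vert J\Vert$ (since then $\Vert L_{\sigma(p)}\Vert<\Vert J\Vert$ for each $\sigma$), which is exactly the order condition you need. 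You should make this step explicit; in particular, note that your argument does not go through without the constant-geometry assumption, which you never invoke.
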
 
We prove in proposition \ref{tangent} below that $u^{(x)}_{t}$ admits smooth Green function $g^{(x)}$; the first step to do that is to show that it is the projection of a diffusion taking values on a universal Lie group.

\begin{itemize}
\item Denote by $\mathcal{G}_{(m,r)}$ the formal r-step free Lie algebra generated by $(Y_{0}, Y_{1}, \dots , Y_{m})$ such that if $J$ is a multi-index with $\Vert J \Vert > r$ then $Y^{J}=0$. This r-step free algebra can be obtained by taking the quotient of the free Lie algebra generated by $(Y_{0},\dots, Y_{m})$ by the ideal $\mathrm{Vect} \{ Y^{J}, \Vert J \Vert \geq r+1 \}$. We denote by $G_{(m,r)}$ the nilpotent, connected and simply connected, Lie group with Lie algebra $\mathcal{G}_{(m,r)}$. Denoting by $\tilde{Y}_{i}$ the left-invariant vectors fields associated to the $Y_{i}$, we consider the $G_{(m,r)}$-valued diffusion $\tilde{x}_{t}$ solution, starting at $e$, of the stochastic differential equation
\[
d \tilde{x}_{t} = \sum_{i=0}^{d} \tilde{Y}_{i}(\tilde{x}_{t}) \circ dB^{i}_{t},
\]
with $B^{0}_{t}=t$.

In \cite{Castell93} Castell shows the Chen-Strichartz formula
\begin{align}
\tilde{x}_{t} = \exp \left ( \sum_{k=1}^{r} \sum_{\Vert L \Vert = k} c_{t}^{L} \tilde{Y}^{L}\right )(e). \label{CS}
\end{align}
Setting $V_{k}= \mathrm{Vect} \{ Y^{J}, \Vert J \Vert =k \}$ we have the direct sum decomposition
\[
\mathcal{G}_{(m,r)}= V_{1} \oplus \cdots \oplus V_{r}.
\]
\item We can complete $\mathcal{B}$ in $\mathcal{A}$ such that  for $k=1, \dots, r$, $(Y^{K})_{K \in \mathcal{A}, \Vert K \Vert =k}$ is a basis of $V_{k}$. 
 There are constants $(b_{K}^{L})$ such that for any multi-index $L$
\[
Y^{L}= \sum_{K \in \mathcal{A}} b^{L}_{K} Y^{K},
\]
with $b_{K}^{L}=0$ if $\Vert K \Vert \neq \Vert L \Vert$. The Lie algebra $\mathcal{G}_{(m,r)}$ is free up to order $r$ so for multi-indices $L$ of order smaller than $r$ these previous equalities are universal and hold for any family $(Y_{i})_{i=0\dots m}$ in any Lie algebra. In particular for $L$ with $\Vert L \Vert \leq r$ we have
\begin{align*}
X^{L} = \sum_{ K \in \mathcal{A}} b^{L}_{K} X^{K}= \sum_{K \in \mathcal{A}} b_{K}^{L} \left ( \sum_{J \in \mathcal{B}} a_{J}^{K} X^{J}   \right ) = \sum_{J \in \mathcal{B}} \left ( \sum_{K \in \mathcal{A}} b_{K}^{L} a_{J}^{K}\right ) X^{J},
\end{align*}
and so $a_{J}^{L}= \sum_{K\in \mathcal{A}} b_{K}^{L} a_{J}^{K} $ and if $\Vert L \Vert = \Vert J \Vert $ then 
\begin{align}
a_{J}^{L}= \sum_{\underset{\Vert K \Vert = \Vert J \Vert }{K \in \mathcal{A}}} b_{K}^{L}a_{J}^{K}. \label{trucdutsu}
\end{align}
We rewrite $\eqref{CS}$
\[
\tilde{x}_{t}= \exp \left ( \sum_{K \in \mathcal{A}} \left (  \sum_{L, \Vert L \Vert = \Vert K \Vert } b^{L}_{K} c_{t}^{L}\right )  \tilde{Y}^{K} \right )(e)
\]
and we denote by $\tilde{u}_{t}^{(e)}$ the process 
\[
\tilde{u}_{t}^{(e)}:= \left ( \sum_{L, \Vert L \Vert = \Vert K \Vert } b^{L}_{K} c_{t}^{L}\right )_{ K \in \mathcal{A}}.
\]
Set $d:= \mathrm{Card} \mathcal{A} - \mathrm{Card} \mathcal{B}= \mathrm{dim} \mathcal{G}_{(m,r)} -n$, and introduce the projection $p_{x}: \R^{d+n} \to \R^{n}$ defined by: 
\[
p_{x}(\tilde{u}):= \left (\sum_{ \underset{\Vert K \Vert = \Vert J\Vert}{ K \in \mathcal{A}}} a_{J}^{K}(x) \tilde{u}_{K} \right )_{J \in \mathcal{B}}.
\]
We obtain
\begin{align*}
p_{x}(\tilde{u}_{t}^{(e)})&= \left ( \sum_{ \underset{\Vert K \Vert = \Vert J \Vert}{K \in \mathcal{A}}} a_{J}^{K}(x) \left ( \sum_{\underset{\Vert L \Vert = \Vert K\Vert}{L}} c_{t}^{L}b_{K}^{L}\right) \right )_{J \in \mathcal{B}} 
= \left ( \sum_{ \underset{\Vert L \Vert=\Vert J \Vert }{L}} c_{t}^{L} \underset{= a_{J}^{L}  \ \mathrm{by} \  \eqref{trucdutsu}}{\underbrace{\left ( \sum_{ \underset{\Vert K \Vert =\Vert J \Vert  }{K \in \mathcal{A}}} a_{J}^{K}(x)b_{K}^{L}\right)}} \right )_{J \in \mathcal{B}}
= u_{t}^{(x)}.
\end{align*}
\end{itemize}
Now we prove 
\begin{prop}\label{tangent}
The tangent process $u_{t}^{(x)}$ admits a smooth Green function, denoted by $g^{(x)}(0, \cdot )$. 
\end{prop}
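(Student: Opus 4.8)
The plan is to deduce smoothness of the Green function of $u_t^{(x)}$ from hypoellipticity of the generator of the enlarged diffusion $\tilde x_t$ on $G_{(m,r)}$, together with the fact that $u_t^{(x)} = p_x(\tilde u_t^{(e)})$ where $\tilde u_t^{(e)}$ is the coordinate representation of $\tilde x_t$ in the basis $\mathcal{A}$. First I would observe that $\tilde x_t$ is the solution of $d\tilde x_t = \sum_{i=0}^m \tilde Y_i(\tilde x_t)\circ dB^i_t$ with $B^0_t = t$, whose generator is $\frac12\sum_{i=1}^m \tilde Y_i^2 + \tilde Y_0$; since the $\tilde Y_i$ are left-invariant and generate the Lie algebra $\mathcal{G}_{(m,r)}$, H\"ormander's condition \eqref{H} holds at every point of $G_{(m,r)}$, so this operator is hypoelliptic and, by the same Bony argument recalled in the introduction, $\tilde x_t$ killed on exiting a bounded domain has a Green function that is smooth off the diagonal. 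The point $e$ plays no special role: what matters is that the occupation density of $\tilde x_t$ (before exiting a large ball) is a smooth function on $G_{(m,r)}\setminus\{e\}$, and in fact, because the law of $\tilde x_t$ has a smooth density for each $t>0$ by hypoellipticity of the time-dependent operator, the occupation measure $\E[\int_0^{T^{\eps}} \ind_{\cdot}(\tilde x_s)\,ds]$ has a smooth density away from the start.

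Next I would push this forward through the linear projection $p_x : \R^{d+n}\to\R^n$. The subtlety here is that $p_x$ is not injective, so one cannot simply transport the density. Instead I would argue via the occupation-time characterisation: for a test function $f\in C^\infty_c(\R^n)$,
\[
\E\!\left[\int_0^{\cdot} f(u_s^{(x)})\,ds\right] = \E\!\left[\int_0^{\cdot} (f\circ p_x)(\tilde u_s^{(e)})\,ds\right] = \int_{\R^{d+n}} (f\circ p_x)(\tilde u)\,\tilde g(\tilde u)\,d\tilde u = \int_{\R^n} f(u)\Big(\int_{p_x^{-1}(u)} \tilde g\,d\mu_u\Big)du,
\]
where $\tilde g$ is the smooth occupation density of $\tilde u_t^{(e)}$ and $\mu_u$ is Lebesgue measure on the affine fibre $p_x^{-1}(u)$; this exhibits a density $g^{(x)}(0,\cdot)$ for $u_t^{(x)}$ as a partial (fibre) integral of the smooth function $\tilde g$ against a smoothly varying family of affine subspaces. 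Fibre-integration of a smooth, rapidly decaying (or compactly supported, after localisation) function over a smoothly parametrised family of affine subspaces is again smooth, so $g^{(x)}(0,\cdot)\in C^\infty$. To make the decay rigorous one uses Proposition \ref{propestimapriori}: the Green function of $\tilde x_t$ decays like $|\cdot|^{-(Q'-2)}$ in the homogeneous norm of $G_{(m,r)}$ (here $Q'$ is the homogeneous dimension of the free group, and $Q'-2>d$ by the step hypothesis since $n\geq 3$), which guarantees the fibre integrals converge and can be differentiated under the integral sign.

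The main obstacle I anticipate is making the fibre-integration step fully clean: one must verify that $p_x$ restricted to the relevant region is a submersion (which it is, being a surjective linear map) and control the integrand's behaviour at infinity along the fibres uniformly, so that differentiation under the integral is justified — this is exactly where the a priori decay estimate for the Green function on the free group is needed, and one must check that the homogeneous dimension $Q'$ of $G_{(m,r)}$ is large enough relative to the fibre dimension $d$. A secondary point to handle carefully is that $\tilde x_t$ must be run up to the (random) time $T_U^\eps$ appearing in Proposition \ref{taylor} rather than a fixed time; but since the occupation density only improves (decreases) under killing and stays smooth off $e$ by the domain-independence of the singularity noted in the introduction, this causes no real difficulty. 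Alternatively, and perhaps more directly, one can invoke hypoellipticity of $u_t^{(x)}$ itself: although $u_t^{(x)}$ is not a diffusion, its law is the image under the fixed linear map $p_x$ of the law of a hypoelliptic diffusion, hence has a smooth density for every $t$, and integrating in $t$ (using the decay estimate) yields the smooth Green function; I would present whichever of these two routes is shortest in the appendix.
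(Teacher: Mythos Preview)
Your proposal is correct and follows essentially the same route as the paper: obtain a smooth Green function $\tilde g$ for the hypoelliptic diffusion $\tilde u_t^{(e)}$ on the free nilpotent group, then express $g^{(x)}(0,u)$ as the fibre integral of $\tilde g$ over $p_x^{-1}(u)$, using decay of $\tilde g$ in the homogeneous norm to justify finiteness and differentiation under the integral. The paper makes the affine structure of $p_x$ explicit---writing $p_x((u,v))=u+M_x v$ so that $g^{(x)}(0,u)=\int_{\R^d}\tilde g\bigl(0,(u-M_x v,v)\bigr)\,dv$---and defers the integrability check to the proof of Fact~\ref{fact2}; your detour through killing at $T_U^\eps$ is unnecessary, since $\tilde x_t$ is transient and its global occupation density is what is being integrated.
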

\begin{proof}
Since $G_{(m,r)}$ is nilpotent, connected and simply connected, the map 
\[
\tilde{\psi}_{e}: \tilde{u} \mapsto \tilde{\psi}_{e}(\tilde{u})= \exp \left( \sum_{K \in \mathcal{A}} \tilde{u}_{K}\tilde{Y}^{K} \right)(e)
\]
is a diffeomorphism of $\mathcal{G}_{(m,r)}$ onto $G_{(m,r)}$. Thus the process $u_{t}^{(e)}= \tilde{\psi}_{e}^{-1}( \tilde{x}_{t} )$ is a hypoelliptic $\mathcal{G}_{(m,r)}$-valued diffusion and we denote by $\tilde{g}$ its Green function. We identify $\mathcal{G}_{(m,r)}$ to $\R^{n+d}$ via the basis $(X^{K})_{K \in \mathcal{A}}$.

For $\tilde{u} \in \R^{n+d}$ we write $\tilde{u}=(u,v)$ where $u:= (\tilde{u}_{K})_{K \in \mathcal{B}} \in \R^{n}$ and $v:= (\tilde{u}_{K})_{K \in \mathcal{A} \setminus \mathcal{B}} \in \R^{d}$ and we note that
\[
p_{x}(\tilde{u})=p_{x}((u,v))= u + M_{x} (v),
\]
where $M_{x}$ is the $n \times d$-matrix defined by
\[
M_{x}(v) = \left ( \sum_{\underset{\Vert K \Vert = \Vert J \Vert}{K \in \mathcal{A} \setminus \mathcal{B}} } a_{J}^{K} v_{K}  \right )_{J \in \mathcal{B}}. 
\]
Now for a test function $\varphi \in C_{c}^{\infty}(\R^{n})$ we obtain
\begin{align*}
\mathbb{E} \left [ \int_{0}^{+ \infty} \varphi(u_{t}^{(x)}) dt\right ] = \mathbb{E} \left [  \int_{0}^{+\infty} \varphi(p_{x}(\tilde{u}_{t}^{(e)})) dt\right ] &= \int_{\R^{n}\times \R^{d}} \varphi(u +M_{x}(v)) \tilde{g}(0,(u,v)) dudv \\ 
&= \int_{\R^{n}} \varphi(u) \left ( \int_{\R^{d}} \tilde{g}\left (0, \left (u-M_{x}(v), v \right ) \right )dv \right ) du.
\end{align*}  
Then $u_{t}^{(x)}$ admits a Green function denoted by $g^{(x)}$, which verifies:
\begin{align}
g^{(x)}(0,u)= \int_{\R^{d}} \tilde{g}\left (0, \left (u-M_{x}(v), v \right ) \right )dv. \label{gtilde}
\end{align}
In the proof of Fact 2 of Proposition \ref{prop1} in appendix we dominate the integrand by a integrable term. Thus $g^{(x)}(0,u)$ is finite for $u\neq 0$ and the dominated convergence theorem ensures that it is smooth as a function of $u$.  
\end{proof}
Since $u_{\eps^{2}t}^{(x)}$ has the same law as $T_{\eps}(u_{t}^{(x)})$ we deduce that (recall that $v=T_{\eps}(u) \Rightarrow dv = \eps^{Q}du$)
\begin{align}
g^{(x)}\left (0, T_{1/\varepsilon}(u)\right )= \varepsilon^{Q-2}g^{(x)}(0,u). \label{rescalingpt}
\end{align}
For $\eps= 1/ \vert y \vert_{x}$ we obtain
\begin{align} 
g^{(x)}(0,\varphi_{x}^{-1}(y))= \frac{1}{\vert y \vert_{x}^{Q-2}}g^{(x)}(0, \theta_{x}(y)).  \label{chouette}
\end{align}

Denote by $G^{(x, \eps)}$ the Green function of the \emph{rescaled diffusions} $v^{(x, \eps)}_{t}$ on $\tilde{U}^{\eps}= T_{1/\eps} \circ \varphi_{x}^{-1}(U)$. The following computation links $G^{(x, \eps)}$ to $G$.

Denoting by $J_{x}:= \vert \mathrm{Jac}(\varphi_{x}) \vert$ the Jacobian of $\varphi_{x}$ on $\tilde{U}^{\eps}$ we obtain for $u\in \tilde{U}^{\eps}$ and $\psi\in C_{c}^{0}(\tilde{U}^{\eps}\setminus \{u\})$
\begin{align*}
\int_{\tilde{U}^{\eps}}\psi(v)G^{(x,\eps)}(u,v)dv&= \mathbb{E}_{u} \left [ \int_{0}^{\tau_{\eps}} \psi(v^{(x,\eps)}_{t})dt \right ] \\
					&=\mathbb{E}_{\varphi_{x}\circ T_{\eps}(u)} \left [ \int_{0}^{\tau_{\eps}} \psi\left (T_{1/ \eps}\circ \varphi_{x}^{-1}(x_{\eps^{2}t}) \right )dt \right ] \\
					&= \mathbb{E}_{\varphi_{x}\circ T_{\eps}(u)} \left [ \eps^{-2} \int_{0}^{\tau} \psi \left (T_{1/ \eps}\circ \varphi_{x}^{-1}(x_{t})\right )dt \right ] \\
					&=\eps^{-2}\int_{U} \psi \left (T_{1/ \eps}\circ \varphi_{x}^{-1}(y)\right )G(\varphi_{x}\circ T_{\eps}(u),y)dy \\
					&= \eps^{Q-2} \int_{\tilde{U}^{\eps}} \psi(v)G\left (\varphi_{x}\circ T_{\eps}(u), \varphi_{x}\circ T_{\eps} (v)\right) J_{x}(T_{\eps}(v)) dv.
\end{align*}

Thus
\begin{align}
 G^{(x,\eps)}(u,v)=\eps^{Q-2}J_{x}(T_{\eps}(v)) G(\varphi_{x}\circ T_{\eps}(u),\varphi_{x}\circ T_{\eps}(v)), \label{zib}
\end{align}
and in particular, for $y\in U$ 
\begin{align}
G(x,y)= \frac{1}{J_{x}( \varphi_{x}^{-1}(y))\eps^{Q-2}}G^{(x,\eps)}(0, T_{1/\eps}\circ \varphi_{x}^{-1}(y)).  \label{bidule}
\end{align}

Let us now assume the following assumption about the dimensions of the spaces $C_{i}$.
\begin{Ass}\label{ass2}
We assume
\begin{enumerate}
\item $r\geq2$
\item $\dim C_{i}-\dim C_{i-1} \geq 1, \   \forall i=2,\dots,r$
\item $\dim C_{1} \geq2.$
 \end{enumerate}
\end{Ass}

Under these assumptions, and using Taylor expansion of $v_{t}^{(x,\eps)}$ as described in Proposition \ref{taylor},  we show the convergence of $G^{(x, \eps)}$ to $g^{(x)}$, uniformly on compact sets of $\R^{n} \setminus \{0\}$.
\begin{prop}\label{prop1}
For any compact sets $K \subset \R^{n}\setminus \{ 0\}$, we have:
\[
\sup_{u\in K}\left \vert G^{(x,\eps)}(0,u)-g^{(x)}(0,u) \right \vert \underset{\eps \to 0}{\longrightarrow} 0.
\]
\end{prop}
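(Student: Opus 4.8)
The plan is to prove convergence of the rescaled Green functions $G^{(x,\eps)}(0,\cdot)$ to $g^{(x)}(0,\cdot)$ by a compactness-plus-identification-of-the-limit argument, using the probabilistic representation of both as occupation-time densities together with the Taylor expansion of Proposition \ref{taylor}. First I would record that both $G^{(x,\eps)}(0,\cdot)$ and $g^{(x)}(0,\cdot)$ are, by \eqref{zib} and Proposition \ref{propestimapriori}, locally uniformly bounded on $\R^n\setminus\{0\}$ (and likewise all derivatives $X_{j_1}\cdots X_{j_k}$ of $G^{(x,\eps)}$ are, via the second estimate of Proposition \ref{propestimapriori}): the bound $|G(y,z)|\le C|z|_y^{Q-2}$ transfers, because $|\varphi_x\circ T_\eps(u)|_{\varphi_x\circ T_\eps(v)}$ is comparable (uniformly in small $\eps$) to $|u-v|_n$ by the homogeneity of $|\cdot|_n$ and Proposition \ref{loceq}/\ref{prop4}. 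Hypoellipticity (Schauder-type interior estimates for the operators $\Op^{(x,\eps)}$ whose coefficients converge in $C^\infty_{loc}$ to the constant-coefficient tangent operator) then upgrades the uniform $L^\infty$ bound to uniform $C^\infty$ bounds on compact subsets of $\R^n\setminus\{0\}$, so that $\{G^{(x,\eps)}(0,\cdot)\}$ is precompact in $C^0_{loc}(\R^n\setminus\{0\})$ and it suffices to show every subsequential limit equals $g^{(x)}(0,\cdot)$.

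To identify the limit I would test against $\psi\in C^\infty_c(\R^n\setminus\{0\})$ and pass to the limit in
\[
\int \psi(u)\,G^{(x,\eps)}(0,u)\,du=\E\!\left[\int_0^{\tau_\eps}\psi\bigl(v^{(x,\eps)}_t\bigr)\,dt\right].
\]
The key input is Proposition \ref{taylor}: $v^{(x,\eps)}_t=u^{(x)}_t+\eps R^{(x)}(\eps,t)$ with $R^{(x)}$ bounded in probability on $[0,T]$, hence $v^{(x,\eps)}_t\to u^{(x)}_t$ uniformly on compacts in probability; combined with the fact (to be checked) that the exit time $\tau_\eps=\tau/\eps^2$ tends to $+\infty$ in probability — which follows because $\tilde U^\eps=T_{1/\eps}\circ\varphi_x^{-1}(U)$ exhausts $\R^n$ as $\eps\to0$ and $u^{(x)}$ does not explode — one gets $\int_0^{\tau_\eps}\psi(v^{(x,\eps)}_t)\,dt\to\int_0^\infty\psi(u^{(x)}_t)\,dt$. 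Taking expectations (uniform integrability is provided by the a priori bound, since $\int\psi(u)G^{(x,\eps)}(0,u)\,du$ is controlled by $\sup|\psi|\cdot\int_{\mathrm{supp}\,\psi}C|u|_n^{-(Q-2)}\,du<\infty$ uniformly) yields
\[
\lim_{\eps\to0}\int\psi(u)\,G^{(x,\eps)}(0,u)\,du=\E\!\left[\int_0^\infty\psi(u^{(x)}_t)\,dt\right]=\int\psi(u)\,g^{(x)}(0,u)\,du,
\]
by the very definition of $g^{(x)}$ in Proposition \ref{tangent}. Since any $C^0_{loc}$-subsequential limit of $G^{(x,\eps)}(0,\cdot)$ must agree with $g^{(x)}(0,\cdot)$ as a distribution on $\R^n\setminus\{0\}$, and both are continuous there, the limit is $g^{(x)}(0,\cdot)$ and the convergence is uniform on compact subsets of $\R^n\setminus\{0\}$.

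The main obstacle is making the passage from convergence in probability of the paths to convergence of the occupation-time integrals fully rigorous near the exit, i.e. controlling the contribution of times $t$ close to $\tau_\eps$ and the event $\{T\ge\tau_\eps\wedge T^\eps_U\}$ on which the Taylor expansion of Proposition \ref{taylor} is not asserted. This is where Assumption \ref{ass2} and the a priori estimates of Proposition \ref{propestimapriori} do the real work: the estimate $\int_0^{\tau_\eps}\ind_{\{v^{(x,\eps)}_t\in A\}}\,dt$ has expectation $\int_A G^{(x,\eps)}(0,u)\,du\le C\int_A|u|_n^{-(Q-2)}\,du$, which, thanks to $Q-2<Q$ and the structure of $|\cdot|_n$, is small when $A$ is a small neighbourhood of $0$ or of $\partial\tilde U^\eps$ shrinks appropriately; one also needs that $\psi$ has compact support away from $0$ so the singular region is harmless, and that $\mathrm{supp}\,\psi\subset\tilde U^\eps$ for $\eps$ small. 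Assembling these localizations — a near-origin cutoff handled by the a priori bound, a near-boundary cutoff handled by $\tau_\eps\to\infty$, and the bulk handled by Proposition \ref{taylor} — is the technical heart of the argument; the rest is the soft compactness/identification scheme above.
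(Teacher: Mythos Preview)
Your proposal is correct and follows essentially the same architecture as the paper: Ascoli compactness of $\{G^{(x,\eps)}(0,\cdot)\}_\eps$ on $K$, followed by identification of the (unique) limit via the occupation-time representation and the Taylor expansion of Proposition~\ref{taylor}. Two execution differences are worth flagging. First, for equicontinuity the paper does not invoke Schauder-type hypoelliptic interior estimates; it bounds $\partial_{u_i}G^{(x,\eps)}(0,u)$ directly from the second a~priori estimate of Proposition~\ref{propestimapriori} via the chain rule and \eqref{zib} (this is Proposition~\ref{propGxe}), which is lighter and avoids tracking uniformity of hypoelliptic constants along the family $\Op^{(x,\eps)}$. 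Second, your ``uniform integrability'' line in the middle is not quite what is used: boundedness of $\int\psi\,G^{(x,\eps)}$ does not by itself give UI of the random integrals, and the paper instead performs the explicit time-splitting you anticipate in your final paragraph, namely
\[
\Bigl|\E\!\int_0^{\tau_\eps}\!\!f(v^{(x,\eps)}_t)\,dt-\E\!\int_0^\infty\!\!f(u^{(x)}_t)\,dt\Bigr|
\le \E\Bigl[\ind_{T\le\tau_\eps}\!\int_0^T\!|f(v^{(x,\eps)}_t)-f(u^{(x)}_t)|\,dt\Bigr]
+\E\!\int_T^{\tau_\eps}\!|f(v^{(x,\eps)}_t)|\,dt
+\E\!\int_T^\infty\!|f(u^{(x)}_t)|\,dt
+2\|f\|_\infty T\,\Prob(T\ge\tau_\eps),
\]
and handles the four pieces separately (Facts~1--3): the first by Proposition~\ref{taylor}, the third by showing $\int_{B(0,\rho)}g^{(x)}(0,u)\,du<\infty$ via \eqref{gtilde} and the homogeneous bound on $\tilde g$ (this is where Assumption~\ref{ass2} enters, through Lemma~(A.7) of \cite{B-A_Grad}), and the second by the Markov property at time $T$ together with a uniform-in-$\eps$ potential bound $\int_{B(0,\rho)}\eps^{Q-2}|v^x_\eps|_{u^x_\eps}^{-(Q-2)}\,dv\le c$ and a decay as $\|u\|\to\infty$ (Lemma~\ref{lem1}), plus $\liminf_T\limsup_\eps\mu_T^{(x,\eps)}(B(0,R))=0$ (Lemma~\ref{lem2}). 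Your final paragraph correctly identifies that this localization is the technical heart; the paper's Lemmas~\ref{lem1}--\ref{lem2} are the concrete implementation.
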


Considering \eqref{chouette} and \eqref{bidule}, taking for $K$, the unit sphere of $\R^{n}$ for the homogeneous norm $\vert \cdot \vert_{n}$, and $\eps= \vert y \vert_{x}$, Theorem \ref{thm1}  follows from Proposition \ref{prop1}.

As an illustration of the interest of that, we apply it in the next section to a diffusion in the Poincar\'e group constructed as a lift of the relativistic diffusion defined by Dudley in \cite{Dud66}; this leads to a Wiener criterion of thinness and a Poincar\'e cone condition. 

\section{A Wiener criterion and a Poincar\'e cone condition in the Poincar\'e group} \label{sec3}

\subsection{Geometric framework.}

Denote by $(\xi^{0}, \dots, \xi^{d} )$ the coordinates of a point $\xi$ with respect to the canonical basis $(e_{0}, e_{1}, \dots, e_{d})$ of $\R^{d+1}$ and set $\R^{1,d}$ the space $\R^{d+1}$ endowed with the Minkowski's quadratic form $q$
\begin{align*}
q(\xi)=\left ( \xi^{0} \right )^{2}-\sum_{i=1}^{d} \left(\xi^{i} \right)^{2}.
\end{align*}
Denote by $SO(1,d)$ the sub-group of  $SL(\R^{d+1})$ made up of direct $q$-isometries, and by $SO_{0}(1,d)$ the connected component of the identity in $SO(1,d)$. The Poincar\'e group is the group $G:=SO_{0}(1,d) \ltimes \R^{1,d}$ of affine isometries with group law defined by
\begin{align*}
(g,\xi)(g',\xi')=(gg', \xi +g\xi').
\end{align*}
We can see $G$ as the following matrix sub-group of $SL(d+2)$
\begin{align*}
G=\left \{\left(\begin{array}{cc}g & \xi \\0 & 1\end{array}\right); \ g \in SO_{0}(1,d), \ \xi \in \R^{(1,d)}\right \}.
\end{align*}
The Lie algebra of $SO(1,d)$ is
\begin{align*}
so(1,d)=\left \{ \left(\begin{array}{cc}0 & {}^t{(u_{i})} \\(u_{i}) & (u_{ij})\end{array}\right); \ (u_{i})\in \R^{d},(u_{ij})\in \R^{d\times d}; \ (u_{ji})=-(u_{ij}) \right \}.
\end{align*}
Thus, $SO_{0}(1,d)=\exp( so_{(1,d)})$ and $G=\exp(\mathfrak{g})$ where
\begin{align*}
\mathfrak{g}= \left \{ \left(\begin{array}{ccc}0 &  {}^t{(u_{i})} & u_{0} \\(u_{i}) & (u_{ij}) & (u_{i0}) \\0 & 0 & 0 \end{array}\right); (u_{i})_{i=1,\dots, d} \in \R^{d}, \ (u_{ji})=-(u_{ij}), \ (u_{0}, (u_{i0})_{i=1,\dots, d})  \in \R^{1,d} \right \}.
\end{align*} 
Denote by $\Hyp^{d}$ the half-unit sphere of $\R^{1,d}$
\[
\Hyp^{d}:= \left \{ \xi \in \R^{1,d} \vert \quad q(\xi)=1 \ \mathrm{and} \ \xi^{0}>0   \right \}.
\]
Endowing $T\Hyp^{d}$ with the metric $-q\vert_{T\Hyp^{d}}$ turns $\Hyp^{d}$ into a Riemaniann manifold of constant negative curvature. This is the hyperboloid model for the hyperbolic space. 

\subsection{Relativistic diffusion.}
We introduce a left invariant diffusion on $G$ which is a lift of the relativistic diffusion on $\Hyp^{d} \times \R^{1,d}$ introduced by Dudley in \cite{Dud66}. The asymptotic behavior of this $G$-valued diffusion was studied in \cite{Bailleul-Raugi}. A relativistic diffusion can be seen as a stochastic perturbation of the geodesic flow on a Lorentzian manifold. For more information on this subject see \cite{Bailleul2010}.

For  $i=1\dots d$, denote by $X_{i}$ the left invariant vector field on  $G$ defined by
\begin{align*}
X_{i}(g,\xi)=(g,\xi)\underset{:=E_{i}\in \mathfrak{g}}{\underbrace{\left(\begin{array}{ccc}0 & {}^t{e_{i}}&0 \\e_i & \mathbf{0} &0\\0 & 0&0\end{array}\right)}}.
\end{align*}
We denote by $X_{0}$ the left invariant vector field on  $G$ defined by
\begin{align*}
X_{0}(g,\xi)=(g,\xi)\underset{:=E_{0}\in \mathfrak{g}}{\underbrace{\left(\begin{array}{ccc}0 & 0&1 \\0 & \mathbf{0} &0\\0 & 0&0\end{array}\right)}}.
\end{align*}

We denote by $\{ (g_{s},\xi_{s}) \}_{s\geq 0}$ the diffusion on $G$ solving
\begin{align*}
d(g_{s},\xi_{s})= \sum_{i=1}^{d} X_{i}(g_{s},\xi_{s})\circ dB^{i}_{s}+X_{0}(g_{s},\xi_{s})ds.
\end{align*}
it has generator
\begin{align*}
\Op= \frac{1}{2}\sum_{i=1}^{d} X_{i}^{2}+ X_{0}.
\end{align*}

Note that $(g_{t}(e_{0}), \xi_{t})$ takes values in $\Hyp^{d}\times \R^{1,d}$. We have the following proposition (cf \cite{Bailleul2010}, \cite{Bailleul-Raugi}, \cite{F-LJ})
\begin{prop}
The process $(g_{t}(e_{0}),\xi_{t})$ in $\Hyp^{d} \times \R^{1,d}$ is Dudley's relativistic diffusion and has generator
\[
\frac{1}{2}\Delta_{\Hyp^{d}} + H_{0}
\]
where the vector field $H_{0}$ generates the geodesic flow in $T^{1}\R^{1,d}\equiv \Hyp^{d}\times \R^{1,d}$. In other words $g_{t}(e_{0})$ is a Brownian motion in $\Hyp^{d}$ and $\xi_{t}$ is its time integral: $\xi_{t}=\int^{t}_{0}g_{s}(e_{0}) ds$.
\end{prop}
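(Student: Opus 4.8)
The plan is to push the generator $\Op=\tfrac12\sum_{i=1}^{d}X_i^2+X_0$ forward through the map $\pi:(g,\xi)\mapsto(g(e_0),\xi)$ from $G$ onto $\Hyp^d\times\R^{1,d}$, checking that $\Op(\phi\circ\pi)=\bigl((\tfrac12\Delta_{\Hyp^d}+H_0)\phi\bigr)\circ\pi$ for every $\phi\in C^\infty_c(\Hyp^d\times\R^{1,d})$. Since the right-hand side depends only on $\phi\circ\pi$, the image of $\{(g_s,\xi_s)\}$ under $\pi$ then solves the martingale problem for $\tfrac12\Delta_{\Hyp^d}+H_0$ and, by the uniqueness available for this (hypoelliptic) operator, is exactly Dudley's relativistic diffusion, cf.\ \cite{Dud66}, \cite{Bailleul-Raugi}, \cite{Bailleul2010}. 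Throughout, write $\bar E_i\in so(1,d)$ for the upper-left $(d+1)\times(d+1)$ block of $E_i$; one has $\bar E_i e_0=e_i$ and $\bar E_i e_i=e_0$, while $E_0$ has vanishing $so(1,d)$-block and only its last column is nonzero, equal to $e_0$.

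First the translation part. Because the last column of each $E_i$ with $i\geq1$ vanishes, the flow of $X_i$ is $(g,\xi)\mapsto(g\exp(t\bar E_i),\xi)$ and does not move $\xi$, whereas the flow of $X_0$ is $(g,\xi)\mapsto(g,\xi+t\,g(e_0))$. Hence along the diffusion $d\xi_s=g_s(e_0)\,ds$, so with $\xi_0=0$ we get $\xi_t=\int_0^t g_s(e_0)\,ds$; and for $F:=\phi\circ\pi$,
\[
X_0F(g,\xi)=\frac{d}{dt}\Big|_{0}\phi\bigl(g(e_0),\xi+t\,g(e_0)\bigr)=d_\xi\phi\bigl(g(e_0),\xi\bigr)[g(e_0)]=(H_0\phi)\circ\pi ,
\]
since $g_s(e_0)\in\Hyp^d$ and, on the flat space $\R^{1,d}$, the geodesic spray at $(u,\xi)$ is the vector $(0,u)$, i.e.\ $H_0$ is the field generating the geodesic flow on $T^1\R^{1,d}\equiv\Hyp^d\times\R^{1,d}$. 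In particular $X_0$ contributes nothing to the $\Hyp^d$-component, and it remains to see that $\tfrac12\sum_i X_i^2$ restricts to $\tfrac12\Delta_{\Hyp^d}$.

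Fix $g\in SO_0(1,d)$ and set $\gamma(t):=g\exp(t\bar E_i)(e_0)$. Then $\dot\gamma(t)=g\exp(t\bar E_i)(e_i)$ and $\ddot\gamma(t)=g\exp(t\bar E_i)(e_0)=\gamma(t)$, which is $q$-orthogonal to $T_{\gamma(t)}\Hyp^d$; hence $\gamma$ is a geodesic of $(\Hyp^d,-q|_{T\Hyp^d})$, of unit speed since $-q(\dot\gamma,\dot\gamma)=-q(e_i,e_i)=1$, with $\gamma(0)=g(e_0)$ and $\dot\gamma(0)=g(e_i)$; moreover the moving frame vector $g\exp(t\bar E_i)(e_i)$ is precisely $\dot\gamma(t)$. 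Using that $X_i$ acts on $G$ as $\tfrac{d}{dt}\big|_{0}$ along $g\mapsto g\exp(t\bar E_i)$, one gets $X_iF(g,\xi)=d_u\phi\bigl(g(e_0),\xi\bigr)[g(e_i)]$ and then, by the two remarks just made,
\[
X_i^2F(g,\xi)=\frac{d^2}{dt^2}\Big|_{0}\phi(\gamma(t),\xi)=\mathrm{Hess}_u\phi\bigl(g(e_0),\xi\bigr)\bigl(g(e_i),g(e_i)\bigr),
\]
the last step because $\gamma$ is a geodesic, so the Christoffel correction vanishes. As $\bigl(g(e_1),\dots,g(e_d)\bigr)$ is a $(-q)$-orthonormal basis of $T_{g(e_0)}\Hyp^d$, summing over $i$ yields $\sum_i X_i^2F=(\Delta_{\Hyp^d}\phi)\circ\pi$ (trace of the Hessian, no first-order term). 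Adding the contribution of $X_0$ gives $\Op F=\bigl((\tfrac12\Delta_{\Hyp^d}+H_0)\phi\bigr)\circ\pi$, which is the claim.

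The argument is mostly bookkeeping; the points requiring care are the normalisation conventions — that $-q|_{T\Hyp^d}$ makes $(e_1,\dots,e_d)$ orthonormal, so that $\tfrac12\sum_i X_i^2$ produces $\tfrac12\Delta_{\Hyp^d}$ with the right constant — and the identification of $t\mapsto\exp(t\bar E_i)(e_0)$ with a unit-speed geodesic whose companion frame moves by parallel transport. Conceptually the latter is just the statement that $G$ is the bundle of positively oriented orthonormal frames of $\R^{1,d}$, that $X_1,\dots,X_d$ are the canonical horizontal vector fields of the Levi--Civita connection, and that the present construction is the Lorentzian counterpart of the Eells--Elworthy horizontal lift; for these standard facts one may consult \cite{Bailleul2010} and \cite{F-LJ}.
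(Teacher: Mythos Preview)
Your proof is correct. The paper itself does not prove this proposition: it simply states it with a pointer to \cite{Bailleul2010}, \cite{Bailleul-Raugi}, \cite{F-LJ} and moves on. Your argument supplies the missing details by pushing the generator $\Op$ forward through $\pi:(g,\xi)\mapsto(g(e_0),\xi)$, which is exactly the Eells--Elworthy philosophy the paper alludes to in the introduction. The key computations---that the flow of $X_0$ is $(g,\xi)\mapsto(g,\xi+t\,g(e_0))$, that $t\mapsto g\exp(t\bar E_i)(e_0)$ is a unit-speed geodesic of $\Hyp^d$ so that $X_i^2F$ equals the Hessian evaluated on $(g(e_i),g(e_i))$, and that summing over the orthonormal frame $(g(e_1),\dots,g(e_d))$ of $T_{g(e_0)}\Hyp^d$ yields the Laplace--Beltrami operator---are all correct. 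One small point worth making explicit is that each $X_i^2F$ individually depends on $g(e_i)$ and hence does \emph{not} factor through $\pi$; only the sum does, because the trace of the Hessian is basis-independent. You implicitly use this when concluding $\sum_i X_i^2F=(\Delta_{\Hyp^d}\phi)\circ\pi$, and it is precisely what makes the projection well defined.
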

A simple computation shows that, for $i,j=1\dots d$ 
\begin{align*}
[X_{i},X_{j}](g,\xi)&=(g,\xi)\underset{:=E_{ij}\in \mathfrak{g}}{\underbrace{\left(\begin{array}{ccc}0 & 0 & 0 \\0 & e_i \otimes e_j- e_j\otimes e_i & 0 \\0 & 0 & 0  \end{array}\right)}}\\
[X_{i},X_{0}](g,\xi)&=(g,\xi)\underset{:=E_{i0}\in \mathfrak{g}}{\underbrace{\left(\begin{array}{ccc}0 & 0 & 0 \\0 &\mathbf{0}& e_{i} \\0 & 0 & 0  \end{array}\right)}}\\
\end{align*}
Thus at every point $(g,\xi)$ of $G$ we have
\[ \mathrm{Vect}\{X_{0}, X_{i},  [X_{i},X_{j}], [X_{i},X_{0}],\  i,j=1\dots d \}=T_{(g,\xi)}G,
\]
so $\Op$ is hypoelliptic by H\"ormander's Theorem.Moreover we have
\begin{align}
[X_{i},[X_{i},X_{0}]]=X_{0},
\end{align}
and then, 
\[ \mathrm{Vect}\{X_{i},  [X_{i},X_{j}], [X_{i},X_{0}],[X_{i},[X_{i},X_{0}]]\  i,j=1\dots d \}=T_{(g,\xi)}G.
\]
By H\"ormander's Theorem $g_{t}$ has a smooth density with respect to the $Haar$ measure of $G$.
With the notation of  Section \ref{sec2} the vectors fields $X_{i}$ induce the following graduation of $T_{(g,\xi)}G= (g,\xi)\mathfrak{g}$:
\begin{align}
C_{1}(g,\xi)&= \left \{ (g,\xi)\left(\begin{array}{ccc}0 & {}^t{(u_{i})} & 0 \\(u_{i}) & 0 & 0 \\0 & 0 & 0\end{array}\right); (u_{i}) \in \R^{d}  \right \} \\
C_{2}(g,\xi)&= \left \{ (g,\xi)\left(\begin{array}{ccc}0 & {}^t{(u_{i})} & u_{0} \\(u_{i}) & (u_{ij}) & 0 \\0 & 0 & 0\end{array}\right); u_{0}\in \R, (u_{i})\in \R^{d}, u_{ji}=-u_{ji}   \right \} \\
C_{3}(g,\xi)&=T_{(g,\xi)}G.
\end{align}
Thus we have $r(g,\xi)=3$.
We choose the triangular basis $\B:=(X_{i},X_{0}, [X_{i},X_{j}], [X_{i},X_{0}])_{i<j=1\dots d}$  for $T_{(g,\xi)}G=(g,\xi)\mathfrak{g}$.
The graded dimension $Q$ is constant on $G$ and is computed explicitly
\[
Q(g,\xi)= d+ 2(d(d-1)/2 +1) + 3d=d^{2}+3d+2.
\]
Note that assumptions \ref{geomconst} and \ref{ass2} are fulfilled.

The family of dilations on $\mathfrak{g}$ is
\begin{align}
T_{\eps}\left(\begin{array}{ccc}0 &  {}^{t}{(u_{i})} & u_{0} \\(u_{i}) & (u_{ij}) & (u_{i0}) \\0 & 0 & 0 \end{array}\right)=\left(\begin{array}{ccc}0 &  \eps \ {}^t{(u_{i})} & \eps^{2}u_{0} \\ \eps (u_{i}) & \eps^{2}(u_{ij}) & \eps^{3}(u_{i0}) \\0 & 0 & 0 \end{array}\right)
\end{align}

The homogeneous norm is given at $\mathbf{e}=(id,0)$ by the formula
\begin{align}
\left \vert \exp\left(\left(\begin{array}{ccc}0 &  {}^{t}{(u_{i})} & u_{0} \\(u_{i}) & (u_{ij}) & (u_{i0}) \\0 & 0 & 0 \end{array}\right)\right ) \right \vert_{\mathbf{e}}= \left( (\sum_{i=1}^{d} u_{i}^{2})^{Q/2} + (u_{0}^{2}+ \sum_{1=i<j=d} u_{ij}^{2})^{Q/4}+ (\sum_{i=1}^{d} u_{i0}^{2})^{Q/6} \right)^{1/Q}
\end{align} 

The $\mathfrak{g}$-valued tangent process is explicitly given by
\begin{align}
u_{t}^{\mathbf{e}}=\left(\begin{array}{ccccc}0 & B_{t}^1 & \cdots & B_{t}^d & t \\B_{t}^1 &  &   &   & \frac{1}{2}\left ( \int_{0}^t B^{1}_s ds -\int_{0}^t s\circ dB_{s}^1\right) \\\vdots &   & \left(\frac{1}{2} \left( \int_{0}^{t}B^{i}_s\circ dB^{j}_s-\int_{0}^{t}B^{j}_s\circ dB^{i}_s\right) \right)_{i=1\dots d}^{j=1\dots d} &   & \vdots \\B_{t}^d &   &   &   & \frac{1}{2}\left ( \int_{0}^t B^{d}_s ds -\int_{0}^t s\circ dB_{s}^d\right) \\0 &   & 0 & 0 & 0\end{array}\right)
\end{align}

Proposition 8 of \cite{Baill06} establishes that the support of $\{(g_{t},\xi_{t})\}_{t\geq0}$ coincides with the future half cone at $(g_{0},\xi_{0})$.
Thus the Green function is strictly positive on this domain:
\[ G(\mathbf{e}, (g,\xi))> 0 \Longleftrightarrow q(\xi)>0  \ \text{and} \ \xi^{0}>0. \]

By scaling, the support of $T_{1/\eps}\circ \exp^{-1}(g_{\eps^{2}s},\xi_{\eps^{2}s})$ is the half cone
\[ \left \{ (g,\xi); \ (\eps^{2} \xi^{0})^{2}- \Vert \eps^{3} \vec{\xi} \Vert^{2} >0 , \xi^{0}>0 \right \}.\]
When $\eps$ goes to $0$ this half cone becomes eventually the half space $\{(g,\xi) \in  G \vert \xi^{0}>0 \}$. 
From Proposition \ref{prop1} we deduce that the  support of the tangent process coincides with this half space.

We call a \textbf{homogeneous cone} with vertex $(g_{0},\xi_{0})$, a subset $C_{h}(g_{0},\xi_{0})$ of $G$ which is invariant under the dilations  $T_{\eps}$ at $(g_{0},\xi_{0})$ such that the ``sole'' \[\left \{(g,\xi) \in C_{h}(g_{0},\xi_{0}); \ \vert (g,\xi) \vert_{(g_{0},\xi_{0})}=1\right \}\]
is a compact subset of the half space
\[
\left \{  (g,\xi) \in G; \ (\xi-\xi_{0})^{0} >0 \right \}.
\]
Since, for such a cone $C_{h}(g_{0}, \xi_{0})$, the sole is compact in a domain where the Green function $g^{(g_{0},\xi_{0})}$ of the tangent process is positive, we can find two positive constants $\alpha, \beta$ such that 
\[
\alpha \leq g^{(g_{0}, \xi_{0})}(0,\cdot) \leq \beta
\]
in the sole. 
By Theorem  \ref{thm1} we can find a neighbourhood $U$ of $(g_{0}, \xi_{0})$ such that for every set $B \subset U$ in the homogeneous cone $C_{h}(g_{0},\xi_{0})$ we have
\begin{align}
\forall (g,\xi) \in B, \ \ \frac{\alpha}{\vert (g,\xi) \vert_{(g_{0}, \xi_{0})}^{Q-2}} \leq G\left ((g_{0},\xi_{0}), (g,\xi) \right) \leq \frac{\beta}{\vert (g,\xi) \vert_{(g_{0}, \xi_{0})}^{Q-2}}. \label{doubleinegalite}
\end{align}
\textbf{Recall:} $Q=d(d+3)+2.$
\begin{rmq}
The right inequality in \eqref{doubleinegalite} remains true even if the sole is not entirely contained in the half space $\left \{  (g,\xi) \in G; \ (\xi-\xi_{0})^{0} >0 \right \}$.\end{rmq}
\subsection{A Wiener criterion and a Poincar\'e cone condition}

Recall a point $(g,\xi)$ is said to be regular with respect to a set $B$ if $\Prob_{(g,\xi)}(T_{B}=0)=1$, where $T_{B}=\inf\{s>0; (g_{s},\xi_{s}) \in B  \}$ is the entrance time in $B$.  

We denote by $B^{r}$ the set of regular points for $B$; by continuity of trajectories we have $\mathring{B} \subset B^{r}\subset \bar{B}$. It is shown in  \cite{Baill06} that $\Op$ admits an adjoint (with respect to the $Haar$ measure of $G$) without zero-order term. Dual processes theory developed in  \cite{BG68} can be applied and we have

\begin{prop}
There exists only one measure $\mu_{B}$ supported by $B^{r}$ such that
\[
\Prob_{(g,\xi)}[T_{B}< + \infty ]= \int G\left ((g,\xi), (g',\xi') \right)\mu_{B}(d(g',\xi')).
\]
\end{prop}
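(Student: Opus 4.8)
\emph{Sketch of the approach.} This proposition is the classical representation of a hitting probability as the Green potential of an equilibrium (capacitary) measure. The hypotheses recalled just above --- that $\Op$ has, with respect to the Haar measure of $G$, an adjoint $\Op^{*}$ with no zero-order term (\cite{Baill06}), together with H\"ormander's hypoellipticity (hence smooth transition densities) --- serve precisely to place us inside the duality framework of \cite{BG68}, whose Riesz decomposition theorem we invoke.

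\textbf{Step 1 ($h_{B}$ is a Green potential).} Set $h_{B}(g,\xi):=\Prob_{(g,\xi)}[T_{B}<+\infty]$; by the strong Markov property $h_{B}$ is excessive for the semigroup of the relevant (transient) process, whose finite Green kernel is $G$. Transience is clear: $\xi^{0}_{s}=\int_{0}^{s}\bigl(g_{u}(e_{0})\bigr)^{0}\,du$ is nondecreasing with integrand $\geq 1$ (since $g_{u}(e_{0})\in\Hyp^{d}$), so $\xi^{0}_{s}\geq s\to+\infty$ and the process eventually leaves every bounded set for good. Hence, for the relatively compact $B$, $\E_{(g,\xi)}[h_{B}(g_{s},\xi_{s})]=\Prob_{(g,\xi)}[\{(g_{u},\xi_{u})\}\text{ meets }B\text{ at some time}\geq s]\to 0$ as $s\to\infty$, so $h_{B}$ is a potential. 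The Riesz decomposition theorem in the dual setting (\cite{BG68}) then yields a positive Radon measure $\mu_{B}$ with $h_{B}(g,\xi)=\int G\bigl((g,\xi),(g',\xi')\bigr)\,\mu_{B}(d(g',\xi'))$.

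\textbf{Step 2 (uniqueness and support).} Uniqueness is the standard injectivity of the Green kernel on positive Radon measures, built into the Riesz decomposition; equivalently, since $G$ is a fundamental solution of $\Op$, applying the differential operator $\Op$ (up to sign) to $h_{B}$ recovers $\mu_{B}$, and a differential operator sends a given function to a single distribution. For the support: $h_{B}=h_{B^{r}}$ because $B$ and $B^{r}$ are hit at the same time $\Prob_{(g,\xi)}$-a.s.\ ($B\setminus B^{r}$ being semipolar); and the equilibrium measure of a set is carried by its regular points, i.e.\ the measure representing $h_{B^{r}}$ lives on the finely closed set $B^{r}$ (\cite{BG68}, theory of r\'eduites/balayage). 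Heuristically, under time reversal $\mu_{B}$ is the last-exit distribution from $B$, and the co-process quits $B$ from a regular point off a polar set, whence $\mu_{B}(G\setminus B^{r})=0$.

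\textbf{Main obstacle.} No new computation is needed; the two non-formal inputs are (i) transience, handled above, and (ii) checking that the abstract machinery of \cite{BG68} --- a well-behaved dual process, the absolute-continuity/duality hypotheses, the Riesz decomposition, and the structure of equilibrium measures --- genuinely applies to this hypoelliptic (not elliptic) diffusion. That is exactly what the quoted regularity of $\Op$ provides, after which the proposition follows by invoking the relevant theorems. The subtlest among these, that $\mu_{B}$ is carried by $B^{r}$ and not merely by $\bar B$, is entirely classical.
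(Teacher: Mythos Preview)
Your proposal is correct and matches the paper's approach: the paper does not give an independent proof of this proposition but simply records that, since $\Op$ has an adjoint without zero-order term (\cite{Baill06}), the dual-process machinery of \cite{BG68} applies, which is exactly the Riesz/equilibrium-measure argument you have unpacked. Your transience argument via $\xi^{0}_{s}\geq s$ is a nice concrete addition; note that in the local setting of the paper one works with the process killed upon exiting a bounded neighbourhood $U$, where transience is automatic, but your global argument is valid as well.
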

We call capacity of $B$, and denote by $C(B)$, the total mass of $\mu_{B}$. We also  have
\begin{align}
C(B)=\sup \{ \mu(B); \mu \in \mathcal{M}(B), G\mu \leq 1  \}. \label{meseq}
\end{align}
Here $\mathcal{M}(B)$ is the set of finite non-negative measures supported in $B$ and $G\mu= \int G(\cdot,(g',\xi') ) \mu(d(g',\xi'))$.
Fix $\lambda <1$. We denote by
\[
B_{n}:= \left \{ (g,\xi) \in B; \ \lambda^{n+1} \leq \vert (g,\xi) \vert_{(g_{0},\xi_{0})} < \lambda^{n}  \right \}
\]
the homogeneous slices of $B$. Using the double inequality \eqref{doubleinegalite} and a Borel-Cantelli lemma due to J. Lamperti in \cite{Lamp63} we have, as in the elliptic situation ( see \cite{Bass91} ), the following Wiener criterion:

\begin{prop}[Wiener criterion] \label{Wiener}
Let $B$ be a subset of $G$ which is included in  a homogeneous  cone $C_{h}(g_{0},\xi_{0})$.
Then $(g_{0}, \xi_{0})$ is regular for $B$ if and only if $\sum_{n} \lambda^{d(d+3)n} C(B_{n})=\infty$. 
\end{prop}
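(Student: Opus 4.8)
The plan is to follow the classical Wiener-test strategy, adapting the elliptic argument (as in \cite{Bass91}) using the two-sided bound \eqref{doubleinegalite} available on the homogeneous cone, together with the scaling behaviour of the homogeneous norm. Write $h(g,\xi):=\Prob_{(g,\xi)}[T_B<+\infty]=\int G((g,\xi),\cdot)\,\mu_B$. The key point is that $(g_0,\xi_0)$ is regular for $B$ if and only if $h(g_0,\xi_0)=1$ (by the $0$--$1$ law for the entrance time, which holds here because the dual process theory of \cite{BG68} applies), and more quantitatively one shows that regularity is equivalent to the divergence of $\sum_n C(B_n)$ weighted by the order of magnitude of $G$ on the slice $B_n$.

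First I would estimate, for each slice $B_n$, the quantity $\Prob_{(g_0,\xi_0)}[T_{B_n}<+\infty]=\int G((g_0,\xi_0),\cdot)\,\mu_{B_n}$. On $B_n$ the homogeneous norm $|\cdot|_{(g_0,\xi_0)}$ is comparable to $\lambda^n$, so by \eqref{doubleinegalite}
\begin{align}
\alpha\,\lambda^{-n(Q-2)}\,C(B_n)\ \leq\ \Prob_{(g_0,\xi_0)}[T_{B_n}<+\infty]\ \leq\ \beta\,\lambda^{-n(Q-2)}\,C(B_n),\notag
\end{align}
and since $Q-2=d(d+3)$ this is exactly $\lambda^{d(d+3)n}C(B_n)$ up to constants. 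Hence if $\sum_n \lambda^{d(d+3)n}C(B_n)<\infty$ then $\sum_n \Prob_{(g_0,\xi_0)}[T_{B_n}<\infty]<\infty$, and by the first Borel--Cantelli lemma only finitely many slices are ever visited from a start near $(g_0,\xi_0)$ — combined with the fact that $B\cap U\subset\bigcup_n B_n$ and that trajectories started at $(g_0,\xi_0)$ must leave every neighbourhood, this gives $\Prob_{(g_0,\xi_0)}[T_B=0]<1$, so $(g_0,\xi_0)$ is irregular. (One must be mildly careful that only the slices with $n$ large matter, since regularity is a local property; slices with bounded $n$ contribute a convergent tail and are harmless.)

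For the converse, assume $\sum_n \lambda^{d(d+3)n}C(B_n)=\infty$. Here the upper bound alone is not enough: one needs a \emph{second-moment} argument to force infinitely many visits, which is where Lamperti's conditional Borel--Cantelli lemma from \cite{Lamp63} enters. The idea is to apply it to the events $A_n=\{T_{B_n}<\infty\}$ (or rather to a suitable subsequence of well-separated slices): one needs a lower bound on $\sum_n \Prob[A_n]$, supplied by the left inequality in \eqref{doubleinegalite} as above, and an upper bound on the pairwise correlations $\Prob[A_n\cap A_k]$. The correlation bound comes from the strong Markov property at $T_{B_k}$ (for $n<k$, say the inner slice is reached first) together with the estimate, uniform for $(g,\xi)\in B_k$, that $\Prob_{(g,\xi)}[T_{B_n}<\infty]=\int G((g,\xi),\cdot)\,\mu_{B_n}\leq C\,\Prob_{(g_0,\xi_0)}[T_{B_n}<\infty]$; this last comparison uses the triangular inequality \eqref{intri} for the homogeneous norm — for a point $(g,\xi)$ in the inner slice $B_k$ and a point in the outer slice $B_n$, one has $|\cdot|_{(g,\xi)}\leq c_0(|\cdot|_{(g_0,\xi_0)}+|\cdot|_{(g_0,\xi_0)})$ comparable to $\lambda^n$, so the Green kernels from $(g,\xi)$ and from $(g_0,\xi_0)$ to the outer slice are of the same order. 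Lamperti's lemma then yields $\Prob_{(g_0,\xi_0)}[A_n \text{ infinitely often}]>0$, hence $\Prob_{(g_0,\xi_0)}[T_B=0]>0$, and another $0$--$1$ law argument (or the fact that the set of regular points is essentially all of its closure in the support of the harmonic measure) upgrades this to regularity.

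The main obstacle, and the step requiring real care rather than bookkeeping, is the correlation estimate $\Prob[A_n\cap A_k]\leq C\,\Prob[A_n]\,\Prob[A_k]$ needed to run Lamperti's lemma: it rests on the uniform Harnack-type comparison of Green functions from different base points within the cone, which is precisely where Theorem~\ref{thm1}, Proposition~\ref{prop4} (the triangular inequality \eqref{intri}) and the positivity constants $\alpha,\beta$ on the sole all have to be combined. A secondary subtlety is the localisation: since \eqref{doubleinegalite} only holds for $B\subset U$ with $U$ a possibly small neighbourhood of $(g_0,\xi_0)$, one must check that regularity of $(g_0,\xi_0)$ for $B$ depends only on $B\cap U$, which follows from continuity of trajectories and the strong Markov property, and that the discarded part $B\setminus U$ is at positive distance from the vertex so contributes nothing to the slices with large $n$. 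Once these two points are settled, the equivalence with $\sum_n\lambda^{d(d+3)n}C(B_n)=\infty$ drops out as in the classical Euclidean Wiener test.
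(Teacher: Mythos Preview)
Your approach is exactly what the paper does: it gives no detailed proof of this proposition, simply invoking the double inequality \eqref{doubleinegalite}, Lamperti's Borel--Cantelli lemma \cite{Lamp63}, and the elliptic pattern in \cite{Bass91}; your fleshing-out (first Borel--Cantelli for the easy direction, Lamperti with a strong-Markov correlation bound via Proposition~\ref{prop4} for the hard direction) is the standard route and is correct in structure. One slip to fix: from your displayed bound $\Prob_{(g_0,\xi_0)}[T_{B_n}<\infty]\asymp\lambda^{-n(Q-2)}C(B_n)$ it does \emph{not} follow that ``this is exactly $\lambda^{d(d+3)n}C(B_n)$'' --- the exponent is $-d(d+3)n = (2-Q)n$, not $+d(d+3)n$ (the printed proposition appears to carry the same sign typo, as comparison with Proposition~\ref{ptitcomp} and the corollary's proof confirms), but your displayed estimate is the correct one and the Borel--Cantelli/Lamperti argument goes through unchanged with the right sign.
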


Let  $H$ be the closure of a domain of $\mathfrak{g}$ included in
\[ \left \{ \left(\begin{array}{ccc}0 &  {}^{t}{(u_{i})} & u_{0} \\(u_{i}) & (u_{ij}) & (u_{i0}) \\0 & 0 & 0 \end{array}\right)  \in \mathfrak{g}; \ u_{0} > 0 \right \}.\]
We denote by $H_{\eps}:= \exp (T_{\eps} (H))$ a ``small'' compact set in the future of $e$. To study the behavior of $C(H_{\eps})$ when $\eps \to 0$ we need the following lemma, where $u_{\eps}$ and $v_{\eps}$ are two arbitrary points of $H_{\eps}$ and $\theta_{u_{\eps}}(v_{\eps}):=T_{1/ \vert v_{\eps} \vert_{u_{\eps}}}(\exp_{u_{\eps}}^{-1}(v_{\eps}))$.

\begin{lem}\label{lem3000}
The following limits exist
\[ \lim_{\eps \to 0}\frac{\vert v_{\eps} \vert_{u_{\eps}}}{\eps}:= \alpha(u,v) > 0 \]
\[ \lim_{\eps \to 0}  \theta_{u_{\eps}}(v_{\eps}):= \beta(u,v) \in \mathfrak{g}-\{0\}. \] 
\end{lem}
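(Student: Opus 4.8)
We want to understand the asymptotics, as $\eps\to 0$, of the homogeneous norm $\vert v_\eps\vert_{u_\eps}$ and the angular variable $\theta_{u_\eps}(v_\eps)$, where $u_\eps=\exp(T_\eps(u))$ and $v_\eps=\exp(T_\eps(v))$ are two points of $H_\eps$ (here $u,v\in H\subset\mathfrak g$ are fixed). The key object is $\exp_{u_\eps}^{-1}(v_\eps)$, i.e. the vector $w_\eps\in\mathfrak g$ such that $v_\eps=\exp_{u_\eps}(w_\eps)$, which in the Poincaré group — using left-invariance of the vector fields $X^J$ and the fact that $\varphi_{u_\eps}=L_{u_\eps}\circ\exp$ after identifying $T_{u_\eps}G$ with $\mathfrak g$ via left-translation — reduces to computing $\exp^{-1}\bigl(u_\eps^{-1}v_\eps\bigr)$ via the Baker–Campbell–Hausdorff formula.

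**The plan.** First I would make the left-invariance reduction explicit: since all the $X^J$ are left-invariant, $\varphi_{u_\eps}^{-1}(v_\eps)$ is given by BCH applied to $\exp(-T_\eps u)\exp(T_\eps v)$, namely $\exp^{-1}(u_\eps^{-1}v_\eps)= BCH(-T_\eps u, T_\eps v)$. Because $\mathfrak g$ is nilpotent of step $3$ (we showed $r=3$), BCH is a finite sum: $BCH(-T_\eps u,T_\eps v)=T_\eps v-T_\eps u+\tfrac12[-T_\eps u,T_\eps v]+(\text{step-3 brackets})$. Next I would exploit the homogeneity of $T_\eps$: the dilations act as $\eps^{\Vert J_i\Vert}$ on the graded components, and the Lie bracket is graded ($[\,\cdot_k,\cdot_l\,]$ lands in degree $k+l$), so $T_\eps$ commutes with every bracketing operation in the sense $[T_\eps a,T_\eps b]=T_\eps[a,b]$. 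Hence $BCH(-T_\eps u,T_\eps v)=T_\eps\bigl(BCH(-u,v)\bigr)$. Setting $\gamma(u,v):=BCH(-u,v)=\exp^{-1}(\exp(-u)\exp(v))\in\mathfrak g$, which is nonzero whenever $u\neq v$ (and in particular when $H$ is a genuine domain with $u,v$ distinct — or more carefully, $\gamma(u,v)\neq 0$ for the pairs we care about because $\exp$ is injective on $\mathfrak g$ and $u\neq v$ forces $\exp(-u)\exp(v)\neq e$), we get
\[
\varphi_{u_\eps}^{-1}(v_\eps)=T_\eps\bigl(\gamma(u,v)\bigr).
\]
Then, by homogeneity of the norm, $\vert v_\eps\vert_{u_\eps}=\vert\varphi_{u_\eps}^{-1}(v_\eps)\vert_n=\vert T_\eps\gamma(u,v)\vert_n=\eps\,\vert\gamma(u,v)\vert_n$, so
\[
\lim_{\eps\to 0}\frac{\vert v_\eps\vert_{u_\eps}}{\eps}=\vert\gamma(u,v)\vert_n=:\alpha(u,v),
\]
which is finite and, as just argued, strictly positive. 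For the angular variable, $\theta_{u_\eps}(v_\eps)=T_{1/\vert v_\eps\vert_{u_\eps}}\bigl(\varphi_{u_\eps}^{-1}(v_\eps)\bigr)=T_{1/(\eps\alpha(u,v))}\bigl(T_\eps\gamma(u,v)\bigr)=T_{1/\alpha(u,v)}\bigl(\gamma(u,v)\bigr)$, which is already independent of $\eps$; so the limit exists trivially and equals $\beta(u,v):=T_{1/\alpha(u,v)}(\gamma(u,v))\in\mathfrak g\setminus\{0\}$, with $\vert\beta(u,v)\vert_n=1$ by construction.

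**Main obstacle and loose ends.** The genuinely substantive point is the commutation identity $T_\eps\circ BCH=BCH\circ(T_\eps\times T_\eps)$, i.e. that $T_\eps$ is a Lie-algebra automorphism-up-to-grading making BCH equivariant; this rests on the grading being compatible with the bracket ($C_i$ brackets into $C_{i+j}$), which is exactly what the triangular-basis/graded-dimension setup of Section \ref{sec2} encodes, and on BCH terminating (nilpotency, $r=3$). One has to be slightly careful that $\varphi_{u_\eps}^{-1}$ really is $\exp^{-1}\circ L_{u_\eps^{-1}}$: this uses that the chosen basis $\mathcal B=(X_i,X_0,[X_i,X_j],[X_i,X_0])$ consists of left-invariant fields, so $\exp(\sum w_k X^{J_k})(u_\eps)=u_\eps\exp(\sum w_k E_{J_k})$ in the matrix group, and the identification $T_\eps$ on $\R^n\cong\mathfrak g$ matches the abstract $T_\eps$ on $\mathfrak g$ defined via the grading. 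A secondary point is verifying $\gamma(u,v)\neq 0$: if the lemma is to be applied with $u,v$ ranging over a possibly-degenerate parameter set one should note $\gamma(u,v)=0\iff u=v$, and the statement should be read for distinct $u,v$ (or the strict positivity of $\alpha$ interpreted accordingly). Everything else is a routine unwinding of the definitions of $T_\eps$, $\vert\cdot\vert_n$, and $\theta$.
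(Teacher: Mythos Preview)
Your argument has a genuine gap: you assert that the Poincar\'e Lie algebra $\mathfrak g$ is nilpotent of step $3$ and that the dilations $T_\eps$ are compatible with the bracket, i.e.\ $[T_\eps a,T_\eps b]=T_\eps[a,b]$. Neither is true. The fact that $r=3$ means only that the \emph{filtration} $C_1\subset C_2\subset C_3=\mathfrak g$ stabilises at step $3$; it does not mean $\mathfrak g$ is nilpotent. Indeed $\mathfrak g=so(1,d)\ltimes\R^{1,d}$ contains the semisimple piece $so(1,d)$, so its lower central series never vanishes. Concretely, the paper itself records the relation $[X_i,[X_i,X_0]]=X_0$: the left-hand side would have weight $1+3=4$ under your grading hypothesis, but $X_0$ sits in $V_2$. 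Similarly $[X_i,[X_j,X_k]]$ lands back in $V_1$. Thus $T_\eps$ is \emph{not} a Lie algebra automorphism, the BCH series does \emph{not} terminate in $\mathfrak g$, and the identity $BCH(-T_\eps u,T_\eps v)=T_\eps\bigl(BCH(-u,v)\bigr)$ is false. Your conclusion that $\varphi_{u_\eps}^{-1}(v_\eps)$ is \emph{exactly} $T_\eps\gamma(u,v)$, with $\theta_{u_\eps}(v_\eps)$ literally independent of $\eps$, therefore does not hold.

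What rescues the lemma --- and what the paper actually does --- is that the filtration property $[C_k,C_l]\subset C_{k+l}$ forces every grading-violating contribution to come with \emph{extra} powers of $\eps$. Writing out $w=BCH(-T_\eps u,T_\eps v)$ term by term, one finds $w_i=\eps(v_i-u_i)+o(\eps)$, $w_0=\eps^2(v_0-u_0)+o(\eps^2)$, $w_{ij}=\eps^2\bigl(v_{ij}-u_{ij}-\tfrac12(v_iu_j-u_iv_j)\bigr)+o(\eps^2)$, $w_{i0}=\eps^3\bigl(v_{i0}-u_{i0}-\tfrac12(u_iv_0-u_0v_i)\bigr)+o(\eps^3)$; the $o(\cdot)$ terms collect both the higher BCH brackets and the off-grading pieces of the low brackets. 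Only after dividing by the appropriate power of $\eps$ and letting $\eps\to0$ do these error terms disappear, yielding a genuine limit $\alpha(u,v)$ (the homogeneous norm of the displayed matrix) and $\beta(u,v)=T_{1/\alpha(u,v)}$ of that matrix. Your intuition is morally right in that this limit is BCH computed in the \emph{associated graded} nilpotent Lie algebra, but the passage to that limit is the whole content of the proof and cannot be skipped.
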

\begin{proof}
Let $u,v \in \mathfrak{g}$ be such that $u_{\eps}=\exp(T_{\eps}(u))$ and $v_{\eps}=\exp(T_{\eps}(v))$.
We want to find $w\in \mathfrak{g}$ such that $v_{\eps}=u_{\eps}\times \exp(w)$, \textit{i.e}, $\exp(w)= \exp(-T_{\eps}(u))\exp(T_{\eps}(v)).$ 

By the Campell-Hausdorff formula we obtain
\begin{align}
w= T_{\eps}(v)- T_{\eps}(u) &-\frac{1}{2}[T_{\eps}(u),T_{\eps}(v)]  \label{CH} \\
\nonumber &+\frac{1}{12}( [-T_{\eps}(u),[-T_{\eps}(u),T_{\eps}(v)]]+ [T_{\eps}(v),[T_{\eps}(v),-T_{\eps}(u)]]) + \cdots 
\end{align}

A simple computation gives
\begin{align*}
[T_{\eps}(v),T_{\eps}(u)]= \sum_{i=1}^{d}o(\eps) X_{i}&+ \sum_{1\leq i<j\leq d}\left( \eps^{2}(u_{i}v_{j}-u_{j}v_{i})+o(\eps^2) \right)[X_{i},X_{j}]+\\ &\sum_{i=1}^{d}\left ( \eps^{3}(v_{i}u_{0}-v_{0}u_{i})+ o(\eps^{3}) \right )[X_{i}, X_{0}] + o(\eps^{2})X_{0}.
\end{align*}
 With \eqref{CH} we get
 \[
 w_{i}=\eps (v_{i}-u_{i}) +o(\eps), \quad w_{0}=\eps^{2}(v_{0}-u_{0})+o(\eps^{2}),
 \]
 \[w_{ij}=\eps^{2}(v_{ij}-u_{ij} -\frac{1}{2}(v_{i}u_{j}- u_{i}v_{j}))+o(\eps^{2}),
 \]
 \[
 w_{i0}=\eps^{3}(v_{i0}-u_{i0}-\frac{1}{2}(u_{i}v_{0}-u_{0}v_{i}))+ o(\eps^{3}).
 \]
 By the definition of the homogeneous norm we have $\frac{\vert v_{\eps} \vert_{u_{\eps}}}{\eps}= \frac{\vert w\vert_{e} }{\eps}$ and this quantity converges, when $\eps$ goes to $0$, to the homogeneous norm $\alpha(u,v)\neq 0$ of
 \[
 \left (\begin{matrix}        0 & {}^{t} (v_{i}-u_{i}) & (v_{0}-u_{0}) \\
       (v_{i}-u_{i}) &  (v_{ij}-u_{ij} -\frac{1}{2}(v_{i}u_{j}- u_{i}v_{j}))&(v_{i0}-u_{i0}-\frac{1}{2}(u_{i}v_{0}-u_{0}v_{i})) \\
       0&0&0
    \end{matrix} \right ) \in \mathfrak{g}.
 \]

Moreover, since by definition $\theta_{u_{\eps}}(v_{\eps})=T_{1/\vert w \vert_{e}}(w)$ we have
 \[
 \lim_{\eps \to 0} \theta_{u_{\eps}}(v_{\eps})=  \beta(u,v),
 \]
 where
 \[
   \beta(u,v):=\frac{1}{\alpha(u,v)}\left (\begin{matrix}        0 & {}^{t} (v_{i}-u_{i}) & (v_{0}-u_{0}) \\
       (v_{i}-u_{i}) &  (v_{ij}-u_{ij} -\frac{1}{2}(v_{i}u_{j}- u_{i}v_{j}))&(v_{i0}-u_{i0}-\frac{1}{2}(u_{i}v_{0}-u_{0}v_{i})) \\
       0&0&0
    \end{matrix} \right ) \in \mathfrak{g}.
 \]
\end{proof}
Set
\[
q(H):=\frac{m(H)}{\max_{u\in \partial H} \int_{H} \frac{\mathbf{\Phi}_{e}(\beta(u,v))}{\alpha(u,v)^{Q-2}}m(dv)} \raise 2pt\hbox{.}
\]

\textbf{Notation:} To simplify notations, set $\mathbf{\Phi}_{x}(\cdot):=  \frac{1}{J_{x}(0)}g^{(x)}(0, \cdot) $ where $g^{(x)}$ is the Green function of the tangent process at $x$. We denote by  $m$ the image by $exp^{-1}$ of the  $Haar$ measure on $G$.

\begin{prop}[Capacities of small compact sets] \label{ptitcomp}
\[
\liminf_{\eps \to 0} \frac{C(H_{\eps})}{\eps^{Q-2}}\geq q(H).
\]
\end{prop}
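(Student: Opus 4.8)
The plan is to bound $C(H_\eps)$ from below by testing the variational characterisation \eqref{meseq} against an explicit full-dimensional measure carried by $H_\eps$, to compute its mass and its potential, and to pass to the limit with the help of Theorem \ref{thm1} and Lemma \ref{lem3000}. The boundary $\partial H$ will enter through the fact that the equilibrium measure of $H_\eps$ is supported on $\partial H_\eps$. Concretely, I would take for $\nu_\eps$ the image under $\exp$ of the restriction of $m$ to $T_\eps(H)$; then $\nu_\eps$ is carried by $H_\eps$ and, since the dilation $T_\eps$ has Jacobian $\eps^{\sum_i\Vert J_i\Vert}=\eps^{Q}$, one has $\nu_\eps(H_\eps)=m(T_\eps H)=\eps^{Q}m(H)$.

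Let $\mu_{H_\eps}$ be the measure of the preceding proposition, so that $\Prob_x(T_{H_\eps}<\infty)=\int G(x,y)\,\mu_{H_\eps}(dy)$ and $C(H_\eps)$ is the total mass of $\mu_{H_\eps}$. Since $\Op$ has no zero-order term, the capacitary potential $\int G(\cdot,y)\,\mu_{H_\eps}(dy)$, which equals $1$ on $\mathring{H_\eps}$, is $\Op$-harmonic there, so its Riesz measure $\mu_{H_\eps}$ charges no point of $\mathring{H_\eps}$; hence $\mu_{H_\eps}$ is supported by $\partial H_\eps$ (see \cite{Bass91}). As $\nu_\eps$ is absolutely continuous with respect to the Haar measure it gives no mass to $\partial H_\eps$, and since $\Prob_x(T_{H_\eps}<\infty)=1$ for every $x\in\mathring{H_\eps}$, Tonelli's theorem yields
\[
\eps^{Q}m(H)=\nu_\eps(H_\eps)=\int\Prob_x(T_{H_\eps}<\infty)\,\nu_\eps(dx)=\int_{\partial H_\eps}\Bigl(\int G(x,y)\,\nu_\eps(dx)\Bigr)\mu_{H_\eps}(dy)\le\Bigl(\sup_{y\in\partial H_\eps}\int G(x,y)\,\nu_\eps(dx)\Bigr)C(H_\eps),
\]
hence $C(H_\eps)\ge\eps^{Q}m(H)/\sup_{y\in\partial H_\eps}\int G(x,y)\,\nu_\eps(dx)$.

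It then remains to estimate $p_\eps(y):=\int G(x,y)\,\nu_\eps(dx)$ on $\partial H_\eps$. Writing $x=\exp(T_\eps v)$ with $v\in H$ and $y=\exp(T_\eps u)$ with $u\in\partial H$ we get $p_\eps(\exp(T_\eps u))=\eps^{Q}\int_H G(\exp(T_\eps v),\exp(T_\eps u))\,m(dv)$. Applying a version of Theorem \ref{thm1} uniform with respect to the base point ranging over $U$ — the estimates of \cite{N-S-W} it relies upon being locally uniform — we obtain $G(\exp(T_\eps v),\exp(T_\eps u))=(1+o(1))\,\vert\exp(T_\eps u)\vert_{\exp(T_\eps v)}^{-(Q-2)}\,\mathbf{\Phi}_{\exp(T_\eps v)}(\theta_{\exp(T_\eps v)}(\exp(T_\eps u)))$ uniformly in $u,v\in H$; Lemma \ref{lem3000} then lets one pass to the limit in the radial and angular parts, while $\exp(T_\eps v)\to\mathbf{e}$ gives $\mathbf{\Phi}_{\exp(T_\eps v)}\to\mathbf{\Phi}_{\mathbf{e}}$. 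The a priori bound of Proposition \ref{propestimapriori} together with \eqref{compnormeucli} — which makes the resulting singularity $m$-integrable in $v$ over the compact set $H$, exactly as in the proof of Fact 2 of Proposition \ref{prop1} — licenses dominated convergence, and a direct computation with Lemma \ref{lem3000} identifies the limit:
\[
\eps^{-2}\,p_\eps(\exp(T_\eps u))\underset{\eps\to0}{\longrightarrow}\int_H\frac{\mathbf{\Phi}_{\mathbf{e}}(\beta(u,v))}{\alpha(u,v)^{Q-2}}\,m(dv),
\]
uniformly in $u\in\partial H$. Since $\partial H$ is compact and the right-hand side is continuous in $u$, the supremum over $\partial H_\eps$ of $\eps^{-2}p_\eps$ converges to $\max_{u\in\partial H}\int_H\mathbf{\Phi}_{\mathbf{e}}(\beta(u,v))\,\alpha(u,v)^{-(Q-2)}\,m(dv)$, and combining this with the lower bound for $C(H_\eps)$ gives $\liminf_{\eps\to0}\eps^{-(Q-2)}C(H_\eps)\ge q(H)$.

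The delicate point is the last one: Theorem \ref{thm1} is formulated at a fixed base point, so the argument requires upgrading it to a form uniform over base points $\exp(T_\eps v)$ lying in $U$, which means rerunning the rescaling scheme of Section \ref{sec2} with a moving base point and checking that the constants in Propositions \ref{prop1} and \ref{propestimapriori} and in \cite{N-S-W} are locally uniform; simultaneously one needs a $v$-integrable majorant, valid uniformly in $u$, to justify passing to the limit under the integral. A more routine point, but one that must be verified, is the localisation of $\mu_{H_\eps}$ on $\partial H_\eps$ and the fact that $\nu_\eps$ ignores the irregular part of $\partial H_\eps$, which together make the Tonelli identity above exact.
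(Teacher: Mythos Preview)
Your overall strategy---test the variational formula with a full-measure $\nu_\eps$ on $H_\eps$, reduce the estimate to $\partial H_\eps$, and pass to the limit via Theorem~\ref{thm1} and Lemma~\ref{lem3000}---is precisely the paper's. The difference lies in how you reach the boundary: the paper uses the strong Markov property at the entrance time $\tau$ of $H_\eps$, obtaining $G\nu_\eps(x)=\E_x[G\nu_\eps(g_\tau,\xi_\tau)]$ and hence $\|G\nu_\eps\|_\infty=\sup_{\partial H_\eps}G\nu_\eps$, which it feeds directly into \eqref{meseq}. You instead pair $\nu_\eps$ against the equilibrium measure $\mu_{H_\eps}$ and invoke its support on $\partial H_\eps$.

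This difference is not cosmetic here, because $G$ is not symmetric. The paper's route keeps the boundary point $u_\eps$ in the \emph{first} slot of $G$ and integrates over the second: $G\nu_\eps(u_\eps)=\int_H G(u_\eps,v_\eps)\,m(dv)$. Your route puts the boundary point $y=\exp(T_\eps u)$ in the \emph{second} slot and integrates over the first: $p_\eps(y)=\int G(x,y)\,\nu_\eps(dx)$. Theorem~\ref{thm1} is stated with the base point in the first slot, so applying it together with Lemma~\ref{lem3000} to your integrand gives $\mathbf{\Phi}_e\bigl(\beta(v,u)\bigr)/\alpha(v,u)^{Q-2}$, not $\mathbf{\Phi}_e\bigl(\beta(u,v)\bigr)/\alpha(u,v)^{Q-2}$ as you write. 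From the explicit computation in the proof of Lemma~\ref{lem3000} one checks that the Campbell--Hausdorff element for $(v,u)$ is the negative of the one for $(u,v)$, so $\alpha(v,u)=\alpha(u,v)$ but $\beta(v,u)=-\beta(u,v)$; since $g^{(e)}(0,\cdot)$ is supported in the half-space $\{u_0>0\}$, $\mathbf{\Phi}_e$ is not even and the two integrands genuinely differ. Your argument therefore proves $\liminf_{\eps\to0}\eps^{-(Q-2)}C(H_\eps)\ge q'(H)$ for a swapped constant $q'(H)\neq q(H)$, not the proposition as stated. For the Poincar\'e cone corollary any positive lower bound would do, but Proposition~\ref{ptitcomp} itself is not what you have established. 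The cleanest repair is the paper's: use the strong Markov identity \eqref{bordsuffit} to reduce $G\nu_\eps$ to $\partial H_\eps$, which keeps the boundary point in the first argument of $G$ and delivers $\beta(u,v)$ in the correct order.
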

\begin{proof}
Denote by $\nu_{\eps}:=\ind_{H_{\eps}}Haar$ the image of the measure $\nu:= \ind_{H}m$  by the map $\exp \circ T_{\eps}$. 

By \eqref{meseq}, to obtain a lower bound of $C(H_{\eps})$ it is sufficient to get an upper bound for $G\nu_{\eps}$. If $\tau$ denotes the entrance time in $H_{\eps}$, we have, for $(g,\xi) \in  G$: 
\begin{align*}
G\nu_{\eps}(g,\xi)&=\int_{H_{\eps}} G((g,\xi), (g',\xi')) Haar(d(g',\xi')) \\
&= \E_{(g,\xi)}\left [ \int_{\tau}^{+\infty} \ind_{H_{\eps}}(g_{t},\xi_{t})dt \right ]\\
&= \E_{(g,\xi)}\left [   \E_{(g_{\tau},\xi_{\tau})} \left [ \int_{0}^{+\infty} \ind_{H_{\eps}}(g_{t},\xi_{t})dt \right ] \right ],
\end{align*}
hence
\begin{align}
G\nu_{\eps}(g,\xi)=\E_{(g,\xi)}\left [G\nu_{\eps}(g_{\tau},\xi_{\tau}) \right ]. \label{bordsuffit}
\end{align}

Thus it is sufficient to find an upper bound for $G\nu_{\eps}$ on $\partial H_{\eps}$. 
For some $u_{\eps}\in \partial H_{\eps}$, by definition of $\nu_{\eps}$, we have:
\[
G\nu_{\eps}(u_{\eps})= \int_{H} G(u_{\eps}, v_{\eps}) m(dv),
\]
where $v_{\eps}:=\exp(T_{\eps}(v))$. For any $\eta >0$, Theorem \ref{thm1} provides some $\eps_{0}>0$ such that for all $\eps<\eps_{0}$ 
\[
\forall u_{\eps}, v_{\eps} \in H_{\eps}, \quad G(u_{\eps}, v_{\eps})\leq \frac{\eta+ \mathbf{\Phi}_{u_{\eps}}(\theta_{u_{\eps}}(v_{\eps}))}{\vert v_{\eps} \vert_{u_{\eps}}^{Q-2}} \raise 2pt \hbox{.}
\]
Moreover, by Lemma \ref{lem3000}, we can find $\eps_{1}>0$ such that for all $\eps<\eps_{1}$
\[
\forall u_{\eps}, v_{\eps} \in H_{\eps}, \quad \eps^{Q-2} \frac{\eta+ \mathbf{\Phi}_{u_{\eps}}(\theta_{u_{\eps}}(v_{\eps}))}{\vert v_{\eps} \vert_{u_{\eps}}^{Q-2}} \leq \frac{\eta+\mathbf{\Phi}_{e}(\beta(u,v))}{\alpha(u,v)^{Q-2}}+\eta \raise 2pt \hbox{.}
\]
Thus,
\begin{align*}
 \forall u_{\eps} \in \partial H_{\eps} \quad \eps^{Q-2}G\nu_{\eps}(u_{\eps}) &\leq \int_{H} \left ( \frac{\eta+\mathbf{\Phi}_{e}(\beta(u,v))}{\alpha(u,v)^{Q-2}}+\eta \right )m(dv). \\
 &\leq  \eta \times m(H) + \max_{u\in \partial H} \int_{H}  \frac{\eta+\mathbf{\Phi}_{e}(\beta(u,v))}{\alpha(u,v)^{Q-2}} m(dv).
\end{align*}
By \eqref{bordsuffit} we have
\[
\Vert G\nu_{\eps} \Vert_{\infty} \leq \frac{1}{\eps^{Q-2}} \left (  \eta \times m(H) + \max_{u\in \partial H} \int_{H}  \frac{\eta+\mathbf{\Phi}_{e}(\beta(u,v))}{\alpha(u,v)^{Q-2}} m(dv)  \right) \raise 2pt \hbox{.}
\]
By \eqref{meseq}, the previous upper bound provides the following lower bound for the capacity:
\begin{align*}
C(H_{\eps}) &\geq \frac{\eps^{Q-2}\nu_{\eps}(H_{\eps})}{\eta \times m(H) + \max_{u\in \partial H} \int_{H}  \frac{\eta+\mathbf{\Phi}_{e}(\beta(u,v))}{\alpha(u,v)^{Q-2}} m(dv)}\raise 2pt \hbox{.}
\end{align*}
Since $\nu_{\eps}(H_{\eps})=m(H)$, letting $\eta$ go to $0$ we finally obtain
\[
\liminf_{\eps \to 0} \frac{C(H_{\eps})}{\eps^{Q-2}} \geq q(H):= \frac{m(H)}{\max_{u\in \partial H} \int_{H} \frac{\mathbf{\Phi}_{e}(\beta(u,v))}{\alpha(u,v)^{Q-2}}m(dv)}\raise 2pt \hbox{.}
\]

\end{proof}

Propositions \ref{Wiener} and \ref{ptitcomp} above provide a Poincar\'e's cone condition of regularity:

\begin{cor}[Poincar\'e cone condition of regularity]
If  a subset  $B$ contains a homogeneous cone based at $(g_{0},\xi_{0})$  then this point is regular for $B$. 
\end{cor}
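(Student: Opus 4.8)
The plan is to combine the Wiener criterion (Proposition \ref{Wiener}) with the lower bound on capacities of small compacts (Proposition \ref{ptitcomp}), exploiting the scaling structure of the homogeneous slices. Assume $B$ contains a homogeneous cone $C_{h}(g_{0},\xi_{0})$. Up to a left translation bringing $(g_{0},\xi_{0})$ to $\mathbf{e}$, we may work at the origin, where $G$ carries the dilations $T_{\eps}$. The key observation is that the $n$-th homogeneous slice $B_{n}$ contains the slice of the cone $C_{h}$ between the homogeneous radii $\lambda^{n+1}$ and $\lambda^{n}$, and this slice is \emph{itself} the image under $\exp\circ T_{\lambda^{n}}$ of a fixed set — namely, $T_{\lambda^{n}}$ maps the ``unit shell'' $\{w\in\mathfrak{g};\ \lambda \le |\exp(w)|_{\mathbf{e}}<1,\ \exp(w)\in C_{h}\}$ onto the shell at scale $\lambda^{n}$, by homogeneity of $|\cdot|_{\mathbf{e}}$ and invariance of $C_{h}$ under $T_{\eps}$. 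So if $H$ denotes (the $\exp^{-1}$ of) this fixed unit shell — a compact set contained in $\{u_{0}>0\}$, since the sole of $C_{h}$ lies in the half-space $(\xi-\xi_{0})^{0}>0$ — then $B_{n} \supset H_{\lambda^{n}} := \exp(T_{\lambda^{n}}(H))$.

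From here I would argue: capacity is monotone (a larger set has larger capacity, which is immediate from the measure-theoretic characterization \eqref{meseq} since any admissible measure for the smaller set is admissible for the larger), so $C(B_{n}) \ge C(H_{\lambda^{n}})$. By Proposition \ref{ptitcomp}, $C(H_{\eps}) \ge q(H)\,\eps^{Q-2}(1+o(1))$ as $\eps\to 0$, with $q(H)>0$ — positivity of $q(H)$ holds because $H$ sits in the region where $\mathbf{\Phi}_{\mathbf{e}}(\beta(u,v))$ is bounded below by a positive constant (the Green function of the tangent process is strictly positive on the half-space $\{\xi^{0}>0\}$, which contains $H$ and all the relevant $\beta(u,v)$) and $\alpha(u,v)$ is bounded above on the compact $H$, so $m(H)$ divided by the max integral is a strictly positive finite number. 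Hence there is $c>0$ with $C(B_{n}) \ge c\,\lambda^{n(Q-2)}$ for all $n$ large enough. Since $Q-2 = d(d+3)$, the Wiener series satisfies
\[
\sum_{n} \lambda^{d(d+3)n}\,C(B_{n}) \ \ge\ c\sum_{n\ \mathrm{large}} \lambda^{d(d+3)n}\,\lambda^{n(Q-2)} \ =\ c\sum_{n\ \mathrm{large}} \lambda^{2d(d+3)n},
\]
wait — I must double-check the exponent: the Wiener criterion weight is $\lambda^{d(d+3)n} = \lambda^{(Q-2)n}$, and $C(B_{n})\gtrsim \lambda^{(Q-2)n}$, giving terms $\gtrsim \lambda^{2(Q-2)n}$, which is summable, not divergent. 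So this naive bound is \emph{not} enough, and that signals the real subtlety.

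The resolution — and the main obstacle — is that the Wiener criterion of Proposition \ref{Wiener} is stated with a \emph{fixed} $\lambda<1$, but regularity is insensitive to the choice of $\lambda$, and more importantly $C(B_{n})$ for a full cone slice is much larger than $C(H_{\lambda^{n}})$ for a \emph{single} small box: the slice $B_{n}$ is a ``fat'' shell, and one should lower-bound its capacity by the capacity of a shell, not of a box, obtaining $C(B_{n}) \gtrsim \lambda^{-n(Q-2)}\cdot\lambda^{n(Q-2)} \cdot (\text{something})$... more precisely, by rescaling \emph{the whole shell} via $T_{\lambda^{n}}$ and using $C(T_{\eps}A) = \eps^{Q-2}C(A)$ (the scaling law of capacity under $T_{\eps}$, which follows from \eqref{zib}–\eqref{bidule} and \eqref{meseq} exactly as \eqref{rescalingpt} was derived), one gets $C(B_{n}) \ge C(T_{\lambda^{n}}\,\mathrm{shell}) = \lambda^{n(Q-2)}\,C(\mathrm{shell})$ with $C(\mathrm{shell})$ a fixed positive constant — giving again only $\lambda^{2(Q-2)n}$. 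Thus the honest route is: partition the cone more cleverly, or observe that containing a homogeneous cone forces $(g_0,\xi_0)\in B^r$ directly by a scaling/self-similarity argument (a zero-one law: the event $\{T_B = 0\}$ is in the germ $\sigma$-field, and by $T_\eps$-invariance of the cone together with the Blumenthal-type argument the diffusion, which has support filling the future half-space near time $0$ after rescaling, enters the cone at arbitrarily small times with probability one). Concretely I would: (i) rescale time-space by $T_{1/\eps}$; (ii) use that by Proposition \ref{prop1} the rescaled process converges to the tangent process whose support is the half-space $\{\xi^0>0\}$; (iii) since the homogeneous cone, rescaled, still contains a fixed cone and the tangent process immediately enters $\{\xi^0>0\}$, deduce $\Prob_{(g_0,\xi_0)}(T_{C_h} \le \eps^2) \to$ a positive limit, uniformly, forcing $\Prob_{(g_0,\xi_0)}(T_B = 0) = 1$ by Blumenthal's zero-one law. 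I expect step (iii) — transferring the support statement for the \emph{limiting} tangent process into a genuine hitting statement for the actual diffusion at small times — to be the crux, and it is precisely here that the quantitative capacity estimate of Proposition \ref{ptitcomp} feeding the Wiener series (with the correct bookkeeping of which shells contribute) does the job rigorously: one shows $\sum_n \lambda^{(Q-2)n} C(B_n) = \infty$ because each $B_n$, being a full cone slice rather than a box, has $C(B_n) \asymp \lambda^{-n \cdot 0}$... i.e. one must verify that $C(B_n)$ does \emph{not} decay, using that $B_n$ rescaled to unit size is a \emph{fixed} shell of positive capacity independent of $n$ — and THAT requires the upper Green bound in \eqref{doubleinegalite} to convert the fixed shell's positive capacity statement back through the scaling. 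I would present this last point carefully, as it is the genuine content.
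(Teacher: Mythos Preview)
Your strategy is exactly the paper's: reduce to the cone itself, write its slices as $B_{n}=\exp\bigl(T_{\lambda^{n}}(H)\bigr)$ for the fixed unit shell $H$, invoke Proposition~\ref{ptitcomp} to get $C(B_{n})\geq c\,\lambda^{n(Q-2)}$ for large $n$, and conclude by the Wiener criterion. Your derivation $C(B_{n})\gtrsim \lambda^{n(Q-2)}$ is correct.

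The obstacle you hit is not a genuine one; it is a sign slip in the exponent of the Wiener weight. Recall how the Wiener criterion is obtained from the two-sided Green bound \eqref{doubleinegalite}: on the shell $B_{n}$ one has $G\bigl((g_{0},\xi_{0}),\cdot\bigr)\asymp \lambda^{-n(Q-2)}$, so the equilibrium potential $\Prob_{(g_{0},\xi_{0})}[T_{B_{n}}<\infty]=\int G\bigl((g_{0},\xi_{0}),\cdot\bigr)\,d\mu_{B_{n}}$ is comparable to $\lambda^{-n(Q-2)}C(B_{n})$, and the Borel--Cantelli/Lamperti argument yields regularity iff
\[
\sum_{n}\lambda^{-n(Q-2)}\,C(B_{n})\;=\;\sum_{n}\lambda^{-n\,d(d+3)}\,C(B_{n})\;=\;\infty.
\]
(The statement of Proposition~\ref{Wiener} in the paper carries the opposite sign, and the paper's one-line proof of the corollary compensates with a matching typo ``$C(B_{n})$ of order $\lambda^{n(2-Q)}$''; both should be flipped.) With the correct weight and your bound $C(B_{n})\geq c\,\lambda^{n(Q-2)}$, the general term is bounded below by $c>0$ and the series diverges. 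That is the whole argument; the scaling law for capacity, the Blumenthal zero--one detour, and the attempt to show that $C(B_{n})$ ``does not decay'' are all unnecessary.
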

\begin{proof}
It suffices to remark that the vertex of an homogeneous cone is regular for it. Indeed, the slices $B_{n}$ are such that $B_{n}=T_{\lambda^{n}}^{(g_{0},\xi_{0})}(B_{0})$ and the capacities $C(B_{n})$ are, by Proposition \ref{ptitcomp}, of order $\lambda^{n(2-Q)}$. Thus $\sum_{n} \lambda^{(Q-2)n} C(B_{n})$ diverges and  we conclude by the Wiener criterion of Proposition \ref{Wiener}. 
\end{proof}

\begin{rmq} 
\begin{itemize}

\item In our case the Green function is not symmetric and we cannot properly mimic the proof of Theorem 6.1 given by Chaleyat -Maurel and Le Gall in \cite{CM-LG} to obtain a precise equivalent of the capacity of small compact sets. As a consequence, it seems to be difficult to obtain more pathwise properties such as limit theorems for the Wiener sausage ( section 7 of \cite{CM-LG}).

\item In \cite{FLJ07} Franchi and Le Jan defined relativistic diffusions with values in the unit tangent bundle, $T^{1}\mathcal{M}$, over any Lorentz manifold $\mathcal{M}$. Roughly speaking, their diffusions are obtained by  ``rolling without slipping'' the space $\Hyp^d \times \R^{1,d}$ over $T^{1}\mathcal{M}$ along some trajectory of Dudley's diffusion. The asymptotic behavior of such diffusions in Robertson-Walker space-times was studied in \cite{Angst09}. These diffusions are projections of diffusions $r_{t}$ in the orthonormal frame bundle $\mathcal{O}\mathcal{M}$ which are solution of the SDE
\[
d r_{t} = \sigma \sum_{i=1}^{d} V_{i}(r_{t}) \circ dB^{i}_{t} + H_{0}(r_{t})dt,
\]
where the $V_{i}$ are vertical vector fields on $\mathcal{O}\mathcal{M}$ corresponding to the infinitesimal action on the fibres of the infinitesimal boost $e_{i}\otimes e_{0} + e_{0}\otimes e_{i} \in so(1,d)$. The vector field $H_{0}$ stands for the horizontal infinitesimal parallel displacement of the frame along the geodesic  started in the direction of  the timelike vector of the frame. 

It can be shown that the tangent process associated to $r_{t}$ is the same that the one associated to $g_{t}$ and we obtain a Wiener criterion and a Poincar\'e cone condition for $r_{t}$ as well. 

\item Since we dispose (see \cite{Bailleul-Raugi}) of a geometric description of  the Poisson boundary of Dudley's diffusion, we can expect a Wiener criterion describing the thinness at the boundary. This question is in the spirit of the works of Ancona who provides, for example in \cite{Anc}, an answer in the case of a Brownian motion in a Cartan-Hadamard manifold. Unfortunately the method used in the present article cannot provide such an asymptotic result, as we have looked here only at the infinitesimal behavior.
\end{itemize}
\end{rmq}

\section{Appendix}

\subsection{Proof of Propositions \ref{loceq} and \ref{propestimapriori}}
\begin{proof}[Proof of Proposition \ref{loceq}]
Introduce first an intermediate pseudo-distance $d_{\B}$ which is a slight modification  of $\vert \cdot \vert$. For this denote as in \cite{N-S-W} by $C_{3}(\delta, \B)$ the class of absolutely continuous maps $\phi: [0,1] \to U$ which  satisfy the differential equation
\[
\phi'(t) = \sum_{ J \in \B} a_{J} X^{J}(\phi(t))
\]
almost everywhere with constants $a_{J}$ such that 
\[
\vert a_{J} \vert < \delta^{\Vert J \Vert}.
\]
Then define for $y, z \in U$
\[
d_{\B}(y,z)= \inf \{ \delta >0 \vert \quad \exists \phi \in C_{3}(\delta,\B) \ \mathrm{such \ that} \ \phi(0)=y, \ \phi(1)=z \}.
\]
Recall that $U$ has been taken small enough for $\varphi_{y}$ be a bijection onto $U$. Thus, for $y, z \in U$ there exists a unique $(a_{J})_{J\in \B}$ such that 
\[
z= \exp \left ( \sum_{J \in \B} a_{J} X^{J} \right )(y),
\]
 \[d_{\B}(y,z)= \max_{J \in \B} \vert a_{J} \vert^{1/\Vert J \Vert}, \]
and
\[
d_{\B}(y,z) \leq \vert z \vert_{y} \leq C d_{\B}(y,z),
\]
where 
\[
C= \left ( \sum_{k=1}^{r} (\mathrm{dim} C_{k} -\mathrm{dim}C_{k-1})^{Q/k}\right)^{1/Q}.
\]
We need to show that $d_{\B}$ is locally equivalent to $\rho$. Since $C_{3}(\delta, \B) \subset C(\delta)$ we deduce 
\[
\forall y, z \in U, \quad \rho(y,z) \leq d_{\B}(y,z).
\]
To show that $d_{\B}$ is locally dominated by $\rho$ we follow the proof of Lemma 2.16 of \cite{N-S-W}; let introduce for that purpose the following notations. For a $n$-tuple $\B'$ of multi-indices of order smaller than $r$ we denote by $\Vert \B' \Vert$ the sum of the orders of its components
\[
\Vert \B' \Vert := \sum_{J \in \B'} \Vert J \Vert. 
\]
Denote also by $\lambda_{\B'}(y)$ the determinant of $(X^{J})_{J \in \B'}$ at $y$. Recall that the $n$-tuple $\B$ has been chosen such that $(X^{J}(y))_{J\in \B}$ is a triangular basis of $T_{y} \mathcal{M}$. This means that if $\B'$ is such that $\lambda_{\B'}(y) \neq 0$ then $\Vert \B  \Vert \leq \Vert \B' \Vert$. Moreover 
\[
M:=\sup_{\B', y} \vert \lambda_{\B'}(y) \vert < +\infty,
\]
where the supremum is taken over all $y\in U$ and over the finite set of the $n$-tuple $\B'$ of multi-indices of order smaller than $r$. We obtain for a $n$-tuple $\B'$ of multi-indices, for $y \in U$ and $\delta \in ]0,1]$
\begin{align*}
\vert \lambda_{\B'}(y) \vert \delta^{\Vert \B' \Vert } \leq M \delta^{\Vert \B' \Vert} \leq M \delta^{\Vert \B  \Vert} \leq \frac{M}{\inf_{z \in U} \vert \lambda_{\B}(z) \vert } \vert \lambda_{\B}(y) \vert \delta^{\Vert \B \Vert}.
\end{align*}
Thus, taking \[t := \frac{\inf_{y\in U} \vert \lambda_{B}(y) \vert}{ M} > 0,\] we have
\[
\forall y \in U, \forall \delta \in ]0,1] \quad \vert \lambda_{\B}(y)\vert  \delta^{\Vert \B \Vert} \geq t \sup_{\B'} \vert \lambda_{\B'} (y) \vert \delta^{\Vert \B' \Vert}.  
\]
Now we apply the lemma 2.16 of \cite{N-S-W} and find $\eta >0$ and $\eps(t) >0$ such that 
\[
\forall y \in U, \forall \delta \in ]0,1], \quad B(y, \eta \eps(t) \delta ) \subset B_{\B}(y, \eps(t) \delta ),
\]
where $B$ and $B_{\B}$ are the balls associated respectively to $\rho$ and $d_{\B}$. Thus if $\rho(y,z)\leq \eta \eps(t) $ then 
\begin{align}
\rho(y,z)= \inf \left \{ n \eps(t) \delta , \ \ z \in B(y, \eta \eps(t) \delta) \right \}\geq \eta \inf \left \{ \eps(t)\delta, \ \ z \in B_{\B}(y, \eps(t) \delta ) \right \} = \eta d_{\B}(y,z). \label{boules}
\end{align}
Then $U$ can be chosen small enough in order that \eqref{boules} be verified for $y, z \in U$. 
\end{proof}
\begin{proof}[Proof of Proposition \ref{propestimapriori}]
It is enough to verify that there exists a positive constant $C$ such that $ \mathrm{Vol}( B( y, \rho(y,z))) \geq C \vert z \vert_{y}^{Q}$ for $y,z\in U$. By Proposition \ref{loceq}, we can find $C' >0$ such that $B(y, \rho(y,z)) \supset B_{\B}(y, C' \vert z \vert_{y} ) $. Then
\begin{align*}
 \mathrm{Vol}( B( y, \rho(y,z))) \geq  \mathrm{Vol} (B_{\B}(y, C' \vert z \vert_{y}))&= \int_{B_{\B}(y, C' \vert z \vert_{y})} d\tilde{z} \\
 &= \int_{\vert u \vert_{n} \leq C' \vert z \vert_{y}} \vert \mathrm{Jac} \ \varphi_{y}(u) \vert du.
\end{align*} 
Since 
\[
\inf_{y \in U, u \in \varphi^{-1}_{y}(U)}  \vert \mathrm{Jac} \ \varphi_{y}(u) \vert >0,
\]
there exists $C'' >0$ such that 
\[
\mathrm{Vol}( B( y, \rho(y,z))) \geq C'' \int_{\vert u \vert_{n} < C' \vert z \vert_{y}} du,
\]
and remarking that $d(T_{\eps} u)= \eps^{Q} du$ we finally obtain, putting $C:= C' C''$,
\[
\mathrm{Vol}( B( y, \rho(y,z))) \geq C \vert z \vert_{y}^{Q}. 
\]
\end{proof}
\subsection{Proof of Proposition \ref{taylor} }
\begin{proof}
According to Theorem 4.1 of \cite{Castell93} (p 234) for $t \leq T$, we have
\[
x_{t}^{\eps}= \exp \left ( \sum_{k=1}^{r} \sum_{L,  \Vert L \Vert =k} \eps^{k}c_{t}^{L}X^{L} \right)(x) + \eps^{r+1}\tilde{R}(\eps,t)
\]
where $\tilde{R}(\eps,t)$ is such that there exists $\alpha, c >0$ for which
\begin{align}
\lim_{\eps \to 0} \Prob \left (\sup_{0\leq s\leq T} \Vert \tilde{R}(\eps,s) \Vert > R, \ T< \tau_{\eps} \wedge T^{\eps}_{U} \right)\leq \exp \left ( -\frac{R^{\alpha}}{cT} \right ) \label{tildeR}
\end{align}
for every $R>c$.
Let us consider the surjection $\psi_{x}$ defined by: 
\[
\psi_{x}( (v_{L})_{\Vert L \Vert \leq r})= \exp \left ( \sum_{L, \Vert L \Vert \leq r} v_{L} X^{L}\right )(x), 
\]
so that 
\begin{align*}
v^{(x, \eps)}_{t}& = T_{1/\eps} \circ \varphi_{x}^{-1} \left ( \psi_{x} \left ( (\eps^{\Vert L \Vert} c_{t}^{L} )_{\Vert L \Vert \leq r} \right )  + \eps^{r+1} \tilde{R}(\eps,t) \right ) \\
&= T_{1 / \eps} \circ \varphi_{x}^{-1} \circ \psi_{x}  \left ( (\eps^{\Vert L \Vert} c_{t}^{L} )_{\Vert L \Vert \leq r} \right ) + \eps \bar{R}(\eps,t) \\
&= \left (\eps^{-\Vert J_{i} \Vert }  (\varphi^{-1}_{x} \circ \psi_{x})_{i} \left ( (\eps^{\Vert L \Vert} c_{t}^{L} )_{\Vert L \Vert \leq r} \right )  \right )_{i=1, \dots ,n } +  \eps \bar{R}(\eps,t),
\end{align*}
where $ \Vert \bar{R}(\eps,t) \Vert \leq Lip( \varphi_{x}^{-1} ) \Vert \tilde{R}(\eps,t)  \Vert$, since $ \Vert T_{1/ \eps} (u) \Vert \leq \eps^{-r} \Vert u \Vert$ for $\eps < 1$. We need to prove that, for $J \in \mathcal{B}$, the Taylor expansion
\begin{align}
(\varphi^{-1}_{x} \circ \psi_{x})_{J} \left ( (\eps^{\Vert L \Vert} c_{t}^{L} )_{\Vert L \Vert \leq r} \right ) = \eps^{\Vert J \Vert} \sum_{L, \Vert L \Vert = \Vert J \Vert} c_{t}^{L} a_{J}^{L}(x) + \eps^{\Vert J \Vert+1} R_{J} (\eps, t) \label{remain}
\end{align}
is such that $R_{J}(\eps, t)$ is bounded in probability as $\tilde{R}(\eps,t)$. For this, we use the following lemma.
\begin{lem}
For $J \in \mathcal{B}$ and $L, \Vert L \Vert \leq r$
\[
\frac{\partial (\varphi^{-1}_{x} \circ \psi_{x})_{J}  }{ \partial v_{L}} (0) = a_{J}^{L}(x) 
\] 
which, by triangularity, is $0$ if $\Vert L \Vert < \Vert J\Vert$. Under the assumption \ref{geomconst} we obtain moreover, for $p\geq 2$ and $L_{1} \dots L_{p}$ with $\sum_{j} \Vert L_{j} \Vert \leq \Vert J \Vert$ , that
\[
\frac{\partial^{p} (\varphi^{-1}_{x} \circ \psi_{x})_{J} }{\partial v_{L_{1}} \cdots \partial v_{L_{p}}}(0)= 0.
\]
\end{lem}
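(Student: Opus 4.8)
The statement to prove is the lemma computing the derivatives of $(\varphi_x^{-1}\circ\psi_x)_J$ at $0$. The plan is to work locally around $x$ and exploit that both $\varphi_x$ and $\psi_x$ are built from the exponential map of linear combinations of the bracket vector fields $X^L$, together with the triangularity of the basis $\B$ and the vanishing $a_J^L=0$ whenever $\Vert J\Vert>\Vert L\Vert$.

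First I would compute the first-order term. Write $F:=\varphi_x^{-1}\circ\psi_x$, a smooth map defined near $0$ in the space of $(v_L)_{\Vert L\Vert\le r}$ with values in $\R^n$, and note $F(0)=0$ since $\psi_x(0)=x=\varphi_x(0)$. Differentiating $\psi_x$ at $0$ in the direction $v_L$ gives $\frac{d}{ds}\big|_{s=0}\exp(sX^L)(x)=X^L(x)$, so $\frac{\partial\psi_x}{\partial v_L}(0)=X^L(x)$. Since $\varphi_x$ is a diffeomorphism with $d\varphi_x(0)$ sending the $i$-th coordinate vector to $X^{J_i}(x)$, the chain rule yields $\frac{\partial F_J}{\partial v_L}(0)=\big(d\varphi_x(0)^{-1}X^L(x)\big)_J$, which is exactly the coefficient $a_J^L(x)$ in the expansion $X^L=\sum_{J\in\B}a_J^L X^J$; by triangularity this vanishes when $\Vert L\Vert<\Vert J\Vert$. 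This part is routine.

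The main work — and the expected obstacle — is the vanishing of the higher derivatives $\frac{\partial^p F_J}{\partial v_{L_1}\cdots\partial v_{L_p}}(0)$ for $p\ge2$ when $\sum_j\Vert L_j\Vert\le\Vert J\Vert$. The natural approach is to compare $F$ with its universal counterpart in the free nilpotent group $G_{(m,r)}$: by the Chen–Strichartz/Baker–Campbell–Hausdorff formalism, composing flows of the form $\exp(\sum v_L X^L)$ is governed, up to order $r$, by universal Lie-polynomial identities that hold in any Lie algebra (this is precisely the freeness-up-to-order-$r$ argument already used in the excerpt to derive \eqref{trucdutsu}). So it suffices to prove the identity in the free nilpotent model, where $\varphi_x^{-1}\circ\psi_x$ becomes an explicit polynomial map expressed via BCH, and then transport it back. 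Concretely, one writes $\psi_x((v_L))=\exp(Z)(x)$ where $Z=Z((v_L))$ is the BCH combination of the $v_LX^L$; the point $\exp(Z)(x)$ is then re-expressed in the $\varphi_x$-chart, i.e. as $\exp(\sum_i F_i X^{J_i})(x)$, and BCH again expresses each $F_i$ as a Lie polynomial in the $v_L$. Now assign to each generator $v_L$ the weight $\Vert L\Vert$ (the same grading that defines the dilations $T_\eps$): the vector fields satisfy the grading $[C_i,C_j]\subset C_{i+j}$, so any iterated bracket appearing with a monomial $v_{L_1}\cdots v_{L_p}$ lands in $C_{\Vert L_1\Vert+\cdots+\Vert L_p\Vert}$. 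Expanding such a bracket in the triangular basis $\B$ can only involve $X^J$ with $\Vert J\Vert\le\sum_j\Vert L_j\Vert$, hence the coefficient of $X^{J_i}$ in any monomial of total weight strictly less than $\Vert J_i\Vert$ is forced to vanish, and for total weight exactly $\Vert J_i\Vert$ with $p\ge2$ it also vanishes because order-$k$ homogeneity in the grading is achieved by monomials either of degree one (the linear term, already handled) or, for $p\ge2$, with strictly smaller total weight — more precisely, Assumption \ref{geomconst} guarantees the grading is preserved in a neighbourhood, so the homogeneous component of $F_J$ of weight $\Vert J\Vert$ is \emph{linear} in the $v_L$. That last equality of homogeneous components is the crux; I would make it rigorous by using the scaling relation $\varphi_x^{-1}\circ\psi_x\circ(\text{weight dilation})=T_\eps\circ(\varphi_x^{-1}\circ\psi_x)$ up to lower-order corrections, which pins down each $F_J$ as $T_\eps$-homogeneous of degree $\Vert J\Vert$ to leading order, forcing all $p\ge2$ monomials of weight $\le\Vert J\Vert$ to drop out.

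To finish, I would conclude by combining the two computations: $F_J$ has no constant term, its linear part is $\sum_{\Vert L\Vert=\Vert J\Vert}a_J^L(x)v_L$ (plus terms in $v_L$ with $\Vert L\Vert>\Vert J\Vert$, which contribute to higher $\eps$-powers after substituting $v_L=\eps^{\Vert L\Vert}c_t^L$), and all its degree-$\ge2$ monomials have weight $\ge\Vert J\Vert+1$; substituting $v_L=\eps^{\Vert L\Vert}c_t^L$ then yields exactly the expansion \eqref{remain} with remainder $R_J(\eps,t)$ a finite sum of monomials in the iterated integrals $c_t^L$ with $\eps$-dependent coefficients bounded on $[0,1]$, hence bounded in probability uniformly on $[0,T]$ by the same Fernique-type estimate \eqref{tildeR} controlling $\tilde R(\eps,t)$. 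The delicate point throughout is bookkeeping the grading carefully enough that ``total weight $\le\Vert J\Vert$ and degree $\ge2$'' genuinely forces vanishing; everything else is Taylor's formula and the BCH expansion.
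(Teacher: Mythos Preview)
Your first-order computation is correct and matches the paper. The higher-order argument, however, has a genuine gap, and the paper takes a much more direct route that you have missed.

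The gap is in your reduction to BCH. The map $F=\varphi_x^{-1}\circ\psi_x$ compares two exponential charts built from flows of vector fields on $\mathcal{M}$; re-expressing $\exp\bigl(\sum_L v_L X^L\bigr)(x)$ as $\exp\bigl(\sum_i F_i X^{J_i}\bigr)(x)$ is \emph{not} a BCH computation (BCH concerns products of exponentials in a Lie group), and the coefficients $a_J^L$ are functions on $U$, not constants, so the ``universal Lie-polynomial'' picture from $G_{(m,r)}$ does not transfer directly. Your central claim---that the weight-$\Vert J\Vert$ homogeneous component of $F_J$ is \emph{linear} in the $v_L$---is asserted, not proved: the scaling relation you invoke holds only ``up to lower-order corrections'', and controlling those corrections is exactly the content of the lemma. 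Nothing in your argument pins down why a degree-$p\ge 2$ monomial of total weight \emph{equal} to $\Vert J\Vert$ must have zero coefficient.

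The paper avoids all of this by computing the derivatives explicitly (following \cite{B-A.89}):
\[
\frac{\partial^{p} (\varphi_{x}^{-1} \circ \psi_{x})_{J}}{ \partial v_{L_{1}} \cdots \partial v_{L_{p}}} (0)= \sum_{ \sigma \in \mathfrak{S}_{p} } X^{L_{\sigma(1)}} \cdots X^{L_{\sigma(p-1)}} \bigl(a_{J}^{L_{\sigma(p)}}\bigr) (x).
\]
This formula makes the role of Assumption~\ref{geomconst} transparent: it guarantees that $a_J^{L}\equiv 0$ \emph{identically on $U$} (not merely at $x$) whenever $\Vert L\Vert<\Vert J\Vert$. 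If $p\ge 2$ and $\sum_j\Vert L_j\Vert\le\Vert J\Vert$, then for every $\sigma$ one has $\Vert L_{\sigma(p)}\Vert\le\Vert J\Vert-(p-1)<\Vert J\Vert$, so $a_J^{L_{\sigma(p)}}$ is the zero function and all its derivatives vanish. Your grading heuristic never isolates this identical vanishing, which is precisely what the constant-geometry assumption buys and what makes the lemma a two-line computation rather than a delicate homogeneity argument.
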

\begin{proof}
Following \cite{B-A.89} p 93-94 a direct computation shows that
\[
\frac{\partial  (\varphi_{x}^{-1} \circ \psi_{x})_{J} }{\partial u_{L}}(0)= a_{J}^{L}(x), 
\]

\[
\frac{\partial^{2} (\varphi_{x}^{-1} \circ \psi_{x})_{J}}{\partial u_{L_{1}} \partial u_{L_{2}}} (0) = X^{L_{1}} (a_{J}^{L_{2}} )(x) + X^{L_{2}} (a_{J}^{L_{1}}) (x)
\]
and by iteration
\[
\frac{\partial^{p} (\varphi_{x}^{-1} \circ \psi_{x})_{J}}{ \partial u_{L_{1}} \cdots \partial u_{L_{p}}} (0)= \sum_{ \sigma \in \mathfrak{S}_{p} } X^{L_{\sigma(1)}} \cdots X^{L_{\sigma(p-1)}} (a_{J}^{L_{\sigma(p)}}) (x).
\]
By Assumption \ref{geomconst}, if $\Vert L \Vert < \Vert J \Vert$ then $a_{J}^{L}=0$ on $U$, and thus if $\sum_{j} \Vert L_{j} \Vert \leq \Vert J \Vert$ and $p\geq 2$ then all the terms in the sum are $0$. 
\end{proof}
By this result we write the Taylor expansion up to the order $\Vert J \Vert +1$, when $\eps \to 0$,
\[
(\varphi^{-1}_{x} \circ \psi_{x})_{J} \left ( (\eps^{\Vert L \Vert} c_{t}^{L} )_{\Vert L \Vert \leq r} \right ) = \sum_{L, \Vert J \Vert \leq \Vert L \Vert \leq r} \eps^{\Vert L \Vert} c_{t}^{L} a^{L}_{J} + \eps^{\Vert J \Vert +1}\hat{R}_{J}(\eps, t)
\]
with $\vert \hat{R}_{J}(\eps, t) \vert \leq \Vert (\eps^{\Vert L \Vert -1} c_{t}^{L})_{\Vert L \Vert \leq r} \Vert^{\Vert J \Vert +1} \times  \Vert D^{\Vert J\Vert+1} (\varphi^{-1}_{x} \circ \psi_{x})_{J} \Vert_{\infty}  $. 
Thus the remainder term $R_{J}(\eps,t)$ defined by \eqref{remain} is 
\[
R_{J}(\eps,t)= \hat{R}_{J}(\eps, t) + \sum_{L, \Vert J \Vert +1 \leq \Vert L \Vert \leq r} \eps^{\Vert L \Vert - \Vert J \Vert -1} c_{t}^{L} a_{J}^{L},
\]
and as in the proof of Lemma 4.1 of \cite{Castell93}, by Properties P1 and P2 p 238 of \cite{Castell93} we deduce that there exist $\alpha_{J}, c_{J} > 0$ such that 
\begin{align}
\forall \eps \leq 1, \quad \forall R \geq c_{J}, \quad \Prob \left [ \sup_{0\leq t \leq T}  \Vert R_{J}(\eps,t) \Vert \leq R; T < T_{U}^{\eps} \right ] \leq \exp \left ( - \frac{R^{\alpha_{J}}}{c_{J}T} \right ). \label{Ri}
\end{align}
Now since $R^{(x)}(\eps,t) = \bar{R}(\eps,t) + ( R_{J}(\eps,t) )_{J \in \mathcal{B}}$ the proposition \ref{taylor} follows from \eqref{tildeR} and \eqref{Ri}.
\end{proof}
\subsection{Proof of Proposition \ref{prop1}}

The proof follows the pattern of Ben Arous and Gradinaru's work \cite{B-A_Grad}. Fix $K$ a compact set of $\R^{n} \setminus\{ 0\}$. The following Proposition ensures, using Ascoli's Theorem, that the family $\{ G^{(x,\eps)}(0,\cdot) \}_{\eps>0}$ is relatively compact for the uniform norm on $K$. Thus, to prove Proposition \ref{prop1} it is enough to show that $G^{(x,\eps)}(0,\cdot)$ converges weakly to $g^{(x)}(0, \cdot)$, which appears to be the unique limit point of $\{ G^{(x,\eps)}(0,\cdot) \}_{\eps>0}$.
\begin{prop}\label{propGxe}
We have
\[
\limsup_{\eps >0} \sup_{u \in K}\vert G^{(x,\eps)}(0,u) \vert < + \infty,
\]
and for $i=1\dots n$
\begin{align}
\limsup_{\eps >0} \sup_{u \in K}  \left \vert \frac{\partial }{\partial u_{i}}G^{(x,\eps)}(0,u) \right \vert < + \infty. \label{Gxe}
\end{align}
\end{prop}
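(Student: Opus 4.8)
The plan is to read off everything from the pullback identity \eqref{bidule} (equivalently \eqref{zib} with the first argument set to $0$) together with the \emph{a priori} bounds of Proposition \ref{propestimapriori}. Rewriting \eqref{bidule} as an identity in $u$, with $y=\varphi_x(T_\eps u)$, one has
\[
G^{(x,\eps)}(0,u) = \eps^{Q-2}\, J_x\!\bigl(T_\eps u\bigr)\, G\!\bigl(x,\varphi_x(T_\eps u)\bigr),
\]
and the right-hand side is a smooth function of $(\eps,u)$ on $(0,\infty)\times(\R^n\setminus\{0\})$. In particular the $\limsup$ over $\eps\ge\eps_0$ is trivially finite for each fixed $\eps_0>0$, so it is enough to bound both quantities for $\eps$ small. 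Since $|\varphi_x(T_\eps u)|_x=|T_\eps u|_n=\eps|u|_n$, Proposition \ref{propestimapriori} gives $|G(x,\varphi_x(T_\eps u))|\le C(\eps|u|_n)^{-(Q-2)}$, hence $|G^{(x,\eps)}(0,u)|\le C\,J_x(T_\eps u)\,|u|_n^{-(Q-2)}$. As $\eps\to0$ the point $T_\eps u$ tends to $0$ uniformly for $u\in K$, so $J_x(T_\eps u)$ stays bounded, while $|u|_n$ is bounded below on $K$; this proves the first assertion.

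For the gradient I differentiate the displayed identity, using $\partial_{u_i}(f\circ T_\eps)(u)=\eps^{\Vert J_i\Vert}(\partial_i f)(T_\eps u)$. The term in which $\partial_{u_i}$ falls on $J_x(T_\eps u)$ is handled exactly as above and even gains a harmless factor $\eps^{\Vert J_i\Vert}\le1$. The remaining term is $\eps^{Q-2}\eps^{\Vert J_i\Vert}J_x(T_\eps u)\,H_i(T_\eps u)$ with $H_i(w):=\partial_{w_i}\bigl[G(x,\varphi_x(w))\bigr]$. Writing the differential of $\varphi_x$ in the triangular frame, $\partial_{w_i}\varphi_x(w)=\sum_k c_{ik}(w)\,X^{J_k}(\varphi_x(w))$ with smooth $c_{ik}$, gives $H_i(w)=\sum_k c_{ik}(w)\,(X^{J_k}G)(x,\varphi_x(w))$. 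Each $X^{J_k}$, read as a differential operator, is a finite combination of compositions $X_{j_1}\cdots X_{j_l}$ all of order $\Vert J_k\Vert$, so Proposition \ref{propestimapriori} yields $|(X^{J_k}G)(x,\varphi_x(w))|\le C\,|w|_n^{-(Q-2)-\Vert J_k\Vert}$.

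The point that makes the powers of $\eps$ cancel is the weighted triangularity of the Jacobian of $\varphi_x$: under Assumption \ref{geomconst}, and by the same iterated-differentiation computation as in the lemma used for Proposition \ref{taylor} (this is precisely the property of exponential coordinates of the second kind that underlies the estimates of \cite{N-S-W}), one has $c_{ik}(0)=\delta_{ik}$, $|c_{ik}(w)|\le C$ in general, and $|c_{ik}(w)|\le C\,|w|_n^{\Vert J_k\Vert-\Vert J_i\Vert}$ when $\Vert J_k\Vert>\Vert J_i\Vert$, for $w$ in a neighbourhood of $0$. Substituting, every term of $H_i(w)$ is bounded by $C\,|w|_n^{-(Q-2)-\Vert J_i\Vert}$ (for $\Vert J_k\Vert\le\Vert J_i\Vert$ because $|w|_n<1$, for $\Vert J_k\Vert>\Vert J_i\Vert$ because the vanishing of $c_{ik}$ absorbs the extra negative power), hence so is $H_i(w)$. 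Therefore the remaining term is at most $C\eps^{Q-2}\eps^{\Vert J_i\Vert}(\eps|u|_n)^{-(Q-2)-\Vert J_i\Vert}=C\,|u|_n^{-(Q-2)-\Vert J_i\Vert}$, which is bounded on $K$, proving \eqref{Gxe}. The only nonroutine ingredient is the weighted-triangular bound on the $c_{ik}$; granting it, together with the continuity and boundedness of $J_x$ near $0$, everything else is bookkeeping with the dilation exponents.
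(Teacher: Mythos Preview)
Your proof is correct and takes the same route as the paper's: pull back via \eqref{zib} and invoke the \emph{a priori} estimates of Proposition~\ref{propestimapriori}, using $|\varphi_x(T_\eps u)|_x=\eps|u|_n$. For the derivative bound the paper simply writes $d_{u_\eps}\varphi_x(\partial_{u_i})=X^{J_i}+O(\eps)$ and points to Lemma~2.12 of \cite{N-S-W}; your explicit weighted-triangularity bound $|c_{ik}(w)|\le C|w|_n^{\Vert J_k\Vert-\Vert J_i\Vert}$ for $\Vert J_k\Vert>\Vert J_i\Vert$ is precisely the content hidden in that reference, so you have unpacked the one nonroutine step (and, incidentally, handled the product-rule term coming from $J_x(T_\eps u)$ that the paper silently absorbs by writing $J_x(0)$).
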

\begin{proof}
By \eqref{rescalingpt} for $u\in K$,  $G^{(x,\eps)}(0,u) = \eps^{Q-2} J_{x}(0) G(x,\varphi_{x} \circ T_{\eps} (u))$ so
\[
\forall u \in K, \quad \vert G^{(x,\eps)}(0,u) \vert \leq C \eps^{Q-2} \vert G(x,\varphi_{x} \circ T_{\eps} (u)) \vert \underset{\mathrm{  Prop \  \ref{propestimapriori}}}{\leq} C' \frac{\eps^{Q-2}}{\vert \varphi_{x} \circ T_{\eps} (u) \vert_{x}^{Q-2} }= \frac{C'}{\vert u\vert_{n}^{Q-2}} \raise 2pt \hbox{.} 
\]
Note that the derivatives in the second inequality of Proposition \ref{propestimapriori} are taken with respect to the second variable of $G$. To simplify notation set $G^{x}(\cdot )=G(x,\cdot)$, $u_{\eps}^{x} = \varphi_{x} \circ T_{\eps} (u)$ and $u_{\eps}= T_{\eps} (u)$.
\begin{align*}
\left \vert \frac{\partial }{\partial u_{i}}G^{(x,\eps)}(0,u) \right \vert &= \vert J_{x}(0) \eps^{Q-2} \frac{\partial}{ \partial u_{i} } \left (  G^{x} \circ \varphi_{x} \circ T_{\eps} (u) \right ) \vert \\
&= \left \vert J_{x}(0) \eps^{Q-2 + \Vert J_{i} \Vert} d_{u_{\eps}^{x}} G^{x} \circ d_{u_{\eps}} \varphi_{x} \left ( \frac{\partial}{\partial u_{i}} \right ) \right \vert.
\end{align*}
By the smoothness of $d_{\bullet} \varphi_{x} \left ( \frac{\partial}{\partial u_{i}} \right )$ (see also Lemma 2.12 of \cite{N-S-W} )
\[
d_{u_{\eps}} \varphi_{x} \left ( \frac{\partial}{\partial u_{i}} \right ) = d_{0} \varphi_{x}\left ( \frac{\partial}{\partial u_{i}} \right ) + O(\eps) = X^{J_{i}} + O(\eps),
\]
where $O(\eps)$ is uniform with respect to $u \in K$.
Thus $d_{u_{\eps}^{x}} G^{x} \circ d_{u_{\eps}} \varphi_{x} \left ( \frac{\partial}{\partial u_{i}} \right ) = X^{J_{i}}G^{x}( u_{\eps}^{x} ) + O(\eps)$ and
\begin{align*}
\left \vert \frac{\partial }{\partial u_{i}}G^{(x,\eps)}(0,u) \right \vert &= C' \eps^{Q-2 + \Vert J_{i} \Vert} \left \vert  X^{J_{i}}G^{x}( u_{\eps}^{x} ) + O(\eps) \right \vert \\
&\underset{\mathrm{Prop \  \ref{propestimapriori}}}{\leq} C' \frac{\eps^{Q-2+ \Vert J_{i} \Vert}(1+ O(\eps))}{\vert u_{\eps}^{x} \Vert_{x}^{Q-2 + \Vert J_{i} \Vert}} \leq \frac{C'(1+ O(\eps))}{\vert u \vert_{n}^{Q-2+ \Vert J_{i} \Vert}}\raise 2pt \hbox{.}
\end{align*}
Taking $\sup_{u\in K}$ then $\limsup_{\eps}$ we obtain \eqref{Gxe}. 
\end{proof}

\begin{prop}\label{prop2}
Let $K$ be  a compact set in $\R^{n}\setminus \{ 0\}$ and $f$ be a smooth function supported on $K$. We have:
\[  
\lim_{\eps \to 0 }\left \vert \int f(u) G^{(x,\eps)}(0,u) du - \int f(u) g^{(x)}(0,u)du \right \vert  =0 .
\]
\end{prop}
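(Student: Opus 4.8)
The plan is to read both sides as expected occupation times, to split the time axis at a large but fixed horizon $T$, to treat the finite-horizon part through the stochastic Taylor expansion of Proposition \ref{taylor}, and to absorb the two tails through \emph{a priori} Green estimates. Concretely, I would start from
\[
\int f(u)\,G^{(x,\eps)}(0,u)\,du=\E\Big[\int_0^{\tau_\eps}f\big(v^{(x,\eps)}_t\big)\,dt\Big],\qquad \int f(u)\,g^{(x)}(0,u)\,du=\E\Big[\int_0^{\infty}f\big(u^{(x)}_t\big)\,dt\Big],
\]
the right-hand integral being finite since $g^{(x)}(0,\cdot)$ is finite off the origin (Proposition \ref{tangent}) and $0\notin\mathrm{supp}\,f$; then I fix $T>0$ and split each integral at time $T$ (at $\tau_\eps\wedge T$ on the left).

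For the finite-horizon parts I would first note that $\Prob(\tau_\eps\wedge T^\eps_U\le T)\to0$ as $\eps\to0$: indeed $\tau_\eps$ has the law of $\tau/\eps^2$ with $\tau>0$ a.s., while the curve $t\mapsto\exp\big(\sum_{\Vert L\Vert\le r}\eps^{\Vert L\Vert}c^L_t X^L\big)(x)$ defining $T^\eps_U$ converges uniformly on $[0,T]$ to the constant $x$, which lies in the interior of $U$, the iterated integrals $c^L_t$ being bounded in probability. On the event $\{T<\tau_\eps\wedge T^\eps_U\}$, Proposition \ref{taylor} gives $v^{(x,\eps)}_t=u^{(x)}_t+\eps R^{(x)}(\eps,t)$ with $\sup_{[0,T]}\Vert R^{(x)}(\eps,\cdot)\Vert$ bounded in probability; as $f$ is smooth with compact support it is globally Lipschitz, so $\sup_{0\le t\le T}|f(v^{(x,\eps)}_t)-f(u^{(x)}_t)|\le \mathrm{Lip}(f)\,\eps\,\sup_{0\le t\le T}\Vert R^{(x)}(\eps,\cdot)\Vert\to0$ in probability, and it is bounded by $2\Vert f\Vert_\infty$. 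Bounded convergence then gives $\E\big[\int_0^{\tau_\eps\wedge T}f(v^{(x,\eps)}_t)\,dt\big]\to\E\big[\int_0^{T}f(u^{(x)}_t)\,dt\big]$ as $\eps\to0$, for each fixed $T$.

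For the tails, the tangent-process contribution $\E\big[\int_T^\infty|f(u^{(x)}_t)|\,dt\big]$ tends to $0$ as $T\to\infty$ by monotone convergence, because $\E\big[\int_0^\infty|f(u^{(x)}_t)|\,dt\big]=\int|f|\,g^{(x)}(0,\cdot)<\infty$. For the rescaled diffusion I would use the Markov property at time $T$:
\[
\E\Big[\mathbf{1}_{\tau_\eps>T}\int_T^{\tau_\eps}\big|f(v^{(x,\eps)}_t)\big|\,dt\Big]=\E\big[\mathbf{1}_{\tau_\eps>T}\,\psi_\eps(v^{(x,\eps)}_T)\big],\qquad \psi_\eps(w):=\int_K|f(u)|\,G^{(x,\eps)}(w,u)\,du,
\]
and combine the scaling identity \eqref{zib}, Proposition \ref{propestimapriori} and the homogeneity of $\rho$ and of the volume (Corollary 4.1 and the estimates of \cite{N-S-W}) to obtain $G^{(x,\eps)}(w,u)\le C\,d(w,u)^{-(Q-2)}$ for $\eps$ small, with $d$ a fixed homogeneous quasi-metric on $\R^n$; thus $\psi_\eps(w)\le C\Vert f\Vert_\infty\,|K|\,d(w,K)^{-(Q-2)}$ is small when $w$ is far from $K$ and uniformly bounded when $w$ stays within a fixed distance of $K$ (since $Q-2<Q$). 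What then remains — and this is the one genuinely technical point — is to bound $\Prob\big(\tau_\eps>T,\ d(v^{(x,\eps)}_T,K)\le D\big)$ by a quantity vanishing as $T\to\infty$, uniformly in $\eps$; I would get this from the same uniform Green bound, which keeps $\E\big[\int_0^{\tau_\eps}\mathbf{1}_{K'}(v^{(x,\eps)}_t)\,dt\big]$ bounded uniformly on a slightly larger compact $K'$, together with a uniform-in-$\eps$ lower bound for the expected occupation of $K'$ started anywhere in the $D$-neighbourhood of $K$, i.e.\ a uniform transience estimate for the rescaled diffusions.

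Putting the pieces together: given $\delta>0$, I would choose $T$ so large that the tangent tail and $\limsup_{\eps\to0}$ of the rescaled tail are each below $\delta$, then let $\eps\to0$ using the finite-horizon convergence, obtaining $\limsup_{\eps\to0}\big|\int f\,G^{(x,\eps)}(0,\cdot)-\int f\,g^{(x)}(0,\cdot)\big|\le 3\delta$; since $\delta$ is arbitrary this proves the proposition. Apart from the uniform tail estimate, every step reduces to Proposition \ref{taylor}, the Lipschitz bound on $f$, and dominated or monotone convergence.
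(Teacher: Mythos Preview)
Your overall architecture is exactly the paper's: occupation-time representation, split at a horizon $T$, handle the finite-horizon piece via Proposition~\ref{taylor}, and treat the two tails separately. The finite-horizon argument and the tangent-process tail are fine and match the paper's Facts~1 and~2.

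The step you yourself flag as ``the one genuinely technical point'' is where the sketch has a real gap. Your proposed route---uniform boundedness of $\E\big[\int_0^{\tau_\eps}\ind_{K'}(v^{(x,\eps)}_t)\,dt\big]$ together with a uniform-in-$\eps$ lower bound on the expected occupation of $K'$ from starting points near $K$---yields, via the Markov property, only that $\Prob\big(\tau_\eps>T,\ v^{(x,\eps)}_T\in K'\big)$ is uniformly \emph{bounded}; it does not force this probability to vanish as $T\to\infty$ uniformly in~$\eps$. Integrability of a nonnegative function of $T$ forces $\liminf_T$ of that function to vanish, but only for a \emph{fixed} function, not uniformly over an $\eps$-indexed family. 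The paper closes this by reusing Proposition~\ref{taylor} a second time, now at the single fixed time~$T$: since $v^{(x,\eps)}_T=u^{(x)}_T+\eps R^{(x)}(\eps,T)$ with remainder bounded in probability, one gets
\[
\limsup_{\eps\to0}\,\mu^{(x,\eps)}_T\big(B(0,R)\big)\ \le\ \Prob\big(u^{(x)}_T\in B(0,R+1)\big),
\]
and the right-hand side depends only on the tangent process. As $t\mapsto\Prob\big(u^{(x)}_t\in B(0,R+1)\big)$ is integrable (this is exactly the content of Fact~2), its $\liminf$ as $T\to\infty$ is~$0$, which is all that is needed. This second invocation of the Taylor expansion is the missing idea in your tail control.

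A smaller inaccuracy: the clean bound $G^{(x,\eps)}(w,u)\le C\,d(w,u)^{-(Q-2)}$ for a \emph{fixed} homogeneous quasi-metric $d$ on $\R^n$ is not directly available. What \eqref{zib} and Proposition~\ref{propestimapriori} give is $G^{(x,\eps)}(w,u)\le C\,\eps^{Q-2}\big/\,|v^x_\eps|_{u^x_\eps}^{\,Q-2}$, with the homogeneous norm based at the $\eps$-dependent point $u^x_\eps=\varphi_x\circ T_\eps(w)$; $\rho$ itself is not homogeneous under the dilations $T_\eps$. Turning this into the uniform statement you want requires the smooth dependence of $\varphi_y^{-1}$ on $y$ near $x$ (to handle $w$ in a bounded region) together with the triangle inequality and Euclidean comparison of Proposition~\ref{prop4} (to handle $\Vert w\Vert$ large). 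The paper isolates exactly this as a separate lemma showing that $\int_{B(0,\rho)}\eps^{Q-2}\,|v^x_\eps|_{u^x_\eps}^{-(Q-2)}\,dv$ is bounded uniformly in $(\eps,u)$ and tends to $0$ as $\Vert u\Vert\to\infty$ uniformly in~$\eps$.
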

\begin{proof}
By definition of the Green functions we have:
\begin{align*}
\left \vert \int f(u) G^{(x,\eps)}(0,u) du - \int f(u) g^{(x)}(0,u)du \right \vert = \left \vert \E \left [ \int_{0}^{\tau_{\eps}} f(v_{t}^{(x,\eps)}) dt -\int_{0}^{+\infty} f(u_{t}^{(x)})dt \right ] \right \vert . 
\end{align*}

Now fixing $T>0$, we can decompose the term $\int_{0}^{\tau_{\eps}}  f(v_{t}^{(x,\eps)}) dt -\int_{0}^{+\infty} f(u_{t}^{(x)})dt $ into: 
\begin{align*}
 \ind_{T\leq \tau_{\eps}} &\int_{0}^{T}\left( f(v_{t}^{(x,\eps)})-f(u_{t}^{(x)})\right )dt + \ind_{T\leq\tau_{\eps}}\int_{T}^{\tau_{\eps}}f(v_{t}^{(x,\eps)})dt -  \int_{T}^{+\infty}f(u_{t}^{(x)})dt  \\
 &+ \ind_{T>\tau_{\eps}}\int_{0}^{\tau_{\eps}}f(v_{t}^{(x,\eps)})dt- \ind_{T>\tau_{\eps}}\int_{0}^{T}f(u_{t}^{(x)})dt.
\end{align*}
Thus we have the inequality:
\begin{align*}
&\left \vert \E \left [ \int_{0}^{\tau_{\eps}}  f(v_{t}^{(x,\eps)}) dt -\int_{0}^{+\infty} f(u_{t}^{(x)})dt \right ] \right \vert  
 \leq \E \left [ \ind_{T\leq\tau_{\eps}} \int_{0}^{T} \vert f(v_{t}^{(x,\eps)})-f(u_{t}^{(x)}) \vert dt \right ] \\ &+ \E \left[ \ind_{T\leq \tau_{\eps}}\int_{T}^{\tau_{\eps}} \vert f(v_{t}^{(x,\eps)})\vert dt \right ] + 
  \E \left [ \int_{T}^{+\infty}\vert f(u_{t}^{(x)})\vert dt\right ]+ 2\Vert f\Vert_{\infty}T\Prob( T\geq \tau_{\eps}).
\end{align*}

Now, taking first the $\limsup_{\eps \to 0}$ and secondly the $\liminf_{T\to \infty}$, Proposition \ref{prop2} follows from the following facts:

\begin{fact}\label{fact1}
For $T>0$ fixed, 
\begin{align}
\lim_{\eps \to 0}\E \left [ \ind_{T\leq\tau_{\eps}} \int_{0}^{T} \vert f(v_{t}^{(x,\eps)})-f(u_{t}^{(x)}) \vert dt \right ] =0. \label{1}
\end{align}
\end{fact}

\begin{fact}\label{fact2}
\[
 \lim_{T\to +\infty} \E \left [ \int_{T}^{+\infty}\vert f(u_{t}^{(x)})\vert dt\right ] =0.
 \]
\end{fact}

\begin{fact}\label{fact3}
\[
\liminf_{T\to +\infty} \limsup_{\eps \to 0} \E \left[ \ind_{T\leq \tau_{\eps}}\int_{T}^{\tau_{\eps}} \vert f(v_{t}^{(x,\eps)})\vert dt \right ] =0.
\]
\end{fact}
\begin{itemize}
\item The proof of Fact \ref{fact1} is a consequence of the Taylor expansion $v_{t}^{(x,\eps)}=u_{t}^{(x)}+\eps R^{(x)}(\eps,t)$, where 
\[
\lim_{\eps \to 0} \Prob \left (\sup_{0\leq s\leq T} \Vert R^{(x)}(\eps,s) \Vert > R, \ T< \tau_{\eps} \wedge T_{U}^{\eps} \right)\leq \exp \left ( -\frac{R^{\alpha}}{cT} \right ).
\]

Indeed, by Proposition \ref{taylor}, for $\eta>0$ we can find $\eps_{0}>0$ and $R>0$ such that for all $\eps< \eps_{0}$
\[
\Prob \left (\sup_{0\leq s\leq T} \Vert R^{(x)}(\eps,s) \Vert > R, \ T< \tau_{\eps} \wedge T_{U}^{\eps} \right)\leq \frac{\eta}{2T \Vert f \Vert}\raise 2pt \hbox{.}
\]
So, decomposing the expectation in \eqref{1} depending on wether the event $\left \{ \underset{0\leq s\leq T}{\sup} \Vert R^{(x)}(\eps,s) \Vert > R, \ T\leq \tau_{\eps} \wedge T_{U}^{\eps} \right \}$ holds or not we obtain 
\begin{align*}
\E \left [ \ind_{T\leq\tau_{\eps}} \int_{0}^{T} \vert f(v_{t}^{(x,\eps)})-f(u_{t}^{(x)}) \vert dt \right ] \leq \eta + \eps T R \Vert Df \Vert_{\infty} + 2T \Vert f \Vert_{\infty} \Prob [ T > T_{U}^{\eps} ].
\end{align*}
As in \cite{Castell93}, p. 238, for $\eps$ sufficiently small 
\[
\Prob [ T > T_{U}^{\eps} ] \leq \sum_{L, \Vert L \Vert \leq r} \exp \left ( -\frac{c_{L}}{\eps^{2\Vert L \Vert} T } \right ).
\]
Hence for all $\eta >0$ we have
\[
\limsup_{\eps \to 0} \E \left [ \ind_{T\leq\tau_{\eps}} \int_{0}^{T} \vert f(v_{t}^{(x,\eps)})-f(u_{t}^{(x)}) \vert dt \right ] \leq \eta,
\]
and Fact \ref{fact1} is proved.

\item To prove Fact \ref{fact2}, it suffices to show that
\begin{align}
\E \left [ \int_{0}^{+\infty} \ind_{B(0,\rho)}(u_{t}^{(x)}) dt \right ] < \infty, \label{integrabilite}
\end{align}
where $B(0,\rho)$ is the ball of radius $\rho$ for the homogenous norm $\vert \cdot \vert_{n}$. Recall (cf proof of Proposition \ref{tangent}) that $\tilde{u}_{t}^{(x)}$ is the image of the diffusion $u_{t}^{(e)}$ which lives in a r-step free nilpotent Lie algebra $\mathcal{G}_{(m,r)}$ by the projection map $p_{x}$. Denote by $\tilde{Q}= \sum_{i=1}^{r} i \times \dim V_{i}$ the homogeneous dimension of $\mathcal{G}_{(m,r)}$ and set $\tilde{T}_{\eps}(\tilde{u})= (\eps^{\Vert K \Vert} \tilde{u}_{K} )_{K \in \mathcal{A}}$ the associated dilations. Recalling that $\tilde{g}$ is the Green function of $\tilde{u}_{t}^{(e)}$ we have
\begin{align}
\E \left [ \int_{0}^{+\infty} \ind_{B(0,\rho)}(u_{t}^{(x)}) dt \right ]  &= \int_{B(0,\rho)} g^{(x)}(0,u)du \underset{by \ \eqref{gtilde}}{=} \int_{B(0,\rho) \times \R^{d}} \tilde{g}(0,(u-M_{x}(v), v))du dv. \label{esp}
\end{align}
The homogeneous character of $\tilde{g}$ with respect to the dilations reads
\[
\tilde{g}\left (0, \tilde{T}_{\eps}(\tilde{u}) \right )= \frac{1}{\eps^{\tilde{Q}-2}} \tilde{g}(0,\tilde{u} ) \raise 2pt \hbox{.}
\]
Taking $\eps^{-1}= \vert \tilde{u} \vert_{n+d}:= \left [ \sum_{k=1}^{r}\left (\sum_{ K \in \mathcal{A}, \Vert K \Vert=k} \tilde{u}_{K}^{2} \right)^{\frac{\tilde{Q}}{2k} }\right ]^{\frac{1}{\tilde{Q}}}$, we obtain the following upper bound for $\tilde{g}$ 
\[
\exists c>0, \forall \tilde{u} \neq 0, \quad \vert \tilde{g}(0, \tilde{u} ) \vert  \leq \frac{c}{\vert \tilde{u} \vert_{n+d}^{\tilde{Q}-2}}\raise 2pt \hbox{.}
\]
Thus with \eqref{esp} we obtain the upper bound
\begin{align}
\E \left [ \int_{0}^{+\infty} \ind_{B(0,\rho)}(u_{t}^{(x)}) dt \right ] \leq c \int_{ B(0, \rho) \times \R^{d}} \frac{1}{\vert (u -M_{x}(v) ,v)  \vert_{n+d}^{\tilde{Q}-2}} du dv. \label{machin}
\end{align}
For $k=1, \dots, r$, set 
\[
(u,v)_{k}:= \left ( (u_{K})_{K \in \mathcal{B}},(v_{K})_{K \in \mathcal{A}\setminus \mathcal{B}}\right )_{ \Vert K \Vert =k} \in \R^{\mathrm{dim} V_{k}}.
\]
The following endomorphism of $\R^{\mathrm{dim} V_{k}}$
\begin{align*}
(u,v)_{k}  \longmapsto  \left ( \left (u_{J} -\sum_{\underset{\Vert K \Vert =k}{K \in \mathcal{A} \setminus \mathcal{B}} } a_{J}^{K} v_{K} \right)_{J \in \mathcal{B}, \Vert J \Vert = k }, (v_{K})_{K \in \mathcal{A}\setminus \mathcal{B},\Vert K \Vert =k } \right )
\end{align*}
which is the $k^{\mathrm{th}}$ block of $(u,v) \mapsto (u-M_{x} (v),v)$, is an isomorphism. Therefore there exists a constant $C_{k}$ such that 
\[
\forall (u,v)_{k}, \quad C_{k}\left ( \sum_{\underset{\Vert J \Vert =k}{J \in B}} (u_{J})^{2} + \sum_{\underset{\Vert K \Vert =k}{K \in \mathcal{A}\setminus \mathcal{B}} }(v_{K})^{2} \right ) \leq \sum_{\underset{\Vert J \Vert =k}{J \in B}} \left ( u_{J} -\sum_{\underset{\Vert K \Vert =k}{K \in \mathcal{A} \setminus \mathcal{B}} } a_{J}^{K} v_{K} \right )^{2} +  \sum_{\underset{\Vert K \Vert =k}{K \in \mathcal{A}\setminus \mathcal{B}} }(v_{K})^{2}.
\]
Thus we can find a constant $C=\min \{ C_{k}, k=1, \dots, r \} >0$ such that
\[
\forall (u,v)\in \R^{n+d}, \quad C \vert (u,v) \vert_{n+d} \leq \vert (u-M_{x}(v) , v) \vert_{n+d}.
\]
Moreover, Lemma (A.7) of \cite{B-A_Grad} states exactly that 
\[
\int_{B(0, \rho) \times \R^{d}} \frac{1}{\vert (u,v) \vert_{d}} dudv < +\infty,
\] 
and we deduce, by \eqref{machin}, that
\[
\E \left [ \int_{0}^{+\infty} \ind_{B(0,\rho)}(u_{t}^{(x)}) dt \right ] < +\infty.
\] 
\item The proof of Fact \ref{fact3} is very similar to the proof of Proposition (4.1) of \cite{B-A_Grad}, and we prove two lemmas to conclude.
 Denote by $\mu_{T}^{(x,\eps)}$ the measure whose density is $\ind_{T<\tau_{\eps}}$ with respect to the law of $v_{T}^{(x,\eps)}$. By the Markov property we have:
\[
\E_{0}\left [ \ind_{T<\tau_{\eps}} \int_{T}^{\tau_{\eps}} \vert f(v_{t}^{(x,\eps)}) \vert dt \right ]=\int_{T_{1/\eps}\circ \varphi_{x}^{-1}(U)} d\mu_{T}^{(x,\eps)}(u) \int_{T_{1/\eps}\circ \varphi_{x}^{-1}(U)} G^{(x,\eps)}(u,v)\vert f(v) \vert dv.
\]
For $u,v \in T_{1/\eps} \circ \varphi_{x}^{-1}(U)$, we set $u_{\eps}^{x}:=\varphi_{x}\circ T_{\eps}(u)$ and $v_{\eps}^{x}:=\varphi_{x}\circ T_{\eps}(v)$. \footnote{Be carefull: $u^{x}_{\eps}$ is a vector of $\R^{n}$ and  $u_{\eps}^{(x)}$ denotes the tangent process at time $\eps$.}
 
By \eqref{zib} we have:
 \begin{align*}
 G^{(x,\eps)}(u,v)=\eps^{Q-2}J_{x}(T_{\eps}(v))G(u_{\eps}^{x},v_{\eps}^{x}),
 \end{align*}
 and using Proposition \ref{propestimapriori} we can find $C>0$ such that for all $\eps>0$:
 \[
 \int_{T_{1/\eps}\circ \varphi_{x}^{-1}(U)} G^{(x,\eps)}(u,v)\vert f(v) \vert dv\leq \int_{B(0,\rho)}\frac{C\eps^{Q-2}}{\vert v_{\eps}^{x}\vert^{Q-2}_{u_{\eps}^{x}}}dv,
 \] 
where $\rho$ is large enough, so that the support of $f$ be included in $B(0,\rho)$.
 
 As in \cite{B-A_Grad} we show:
 \begin{lem}\label{lem1}
For all $R>0$ there exist $\eps_{0}>0$ and $c>0$ such that for all $\eps<\eps_{0}$ and $u\in \R^{n}$ such that $\Vert u \Vert \geq R$ we have:
\begin{align*}
 \int_{B(0,\rho)}\frac{\eps^{Q-2}}{\vert v_{\eps}^{x}\vert_{u_{\eps}^{x}}^{Q-2}}dv\leq c.
 \end{align*}
 Moreover,
 \begin{align*}
 \lim_{\Vert u\Vert \to \infty} \int_{B(0,\rho)}\frac{\eps^{Q-2}}{\vert v_{\eps}^{x}\vert_{u_{\eps}^{x}}^{Q-2}}dv =0,
 \end{align*}
 uniformly with respect to $\eps$.
 \end{lem}
 \begin{proof}[Proof of Lemma \ref{lem1}]
 For $u=0$, this means $u_{\eps}^{x}=x$, $\frac{\eps^{Q-2}}{\vert v_{\eps}^{x}\vert_{u_{\eps}^{x}}^{Q-2}}=\frac{1}{\vert \varphi_{x}(v) \vert_{x}^{Q-2}}=\frac{1}{\vert v \vert_{n}^{Q-2}} \raise 2pt \hbox{.}$
 
Lemma (A-1) of \cite{B-A_Grad} states, under assumption \ref{ass2}, that $\int_{B(0,\rho)} \frac{dv}{\vert v \vert_{d}^{Q-2}}$ is bounded by a constant depending only on $\rho$.
As the family of diffeomorphisms $\varphi_{y}^{-1}$ depends smoothly on $y\in U$, we can find a neighbourhood $U_{x}$ of $x$, a constant $C>0$ and some $\eps_{0}>0$ such that for all $y\in U_{x}$ and all $\eps<\eps_{0}$ we have $B(0,\eps \rho) \subset \varphi_{x}^{-1}(U) \cap \varphi_{y}^{-1}(U)$ and for all $w\in B(0,\eps \rho)$ we have $\vert \varphi_{y}^{-1}\circ \varphi_{x}(w) \vert_{n} \geq C \vert w \vert_{n}$. Thus we obtain
\begin{align}
\sup_{y\in U_{x}} \int_{B(0,\rho)} \frac{\eps^{Q-2}}{\vert v_{\eps}^{x}\vert_{y}^{Q-2}}dv &= \sup_{y\in U_{x}} \int_{B(0,\rho)} \frac{\eps^{Q-2}}{\vert \varphi_{y}^{-1}(v_{\eps}^{x})\vert_{n}^{Q-2}}dv 
=\sup_{y\in U_{x}} \int_{B(0,\rho)} \frac{\eps^{Q-2}}{\vert \varphi_{y}^{-1}\circ \varphi_{x}\circ T_{\eps}(v)\vert_{n}^{Q-2}}dv \notag \\
&\leq   \tilde{C}\int_{B(0,\rho)}\frac{\eps^{Q-2}}{\vert T_{\eps}(v)\vert_{n}^{Q-2}}dv \leq \tilde{C} \int_{B(0,\rho)} \frac{dv}{\vert v \vert_{n}^{Q-2}} \label{c} \raise 2pt \hbox{.}
\end{align}
Now choose $\eps_{1}>0$ such that for all $\eps<\eps_{1}$ and for all $u \in \R^{n}$ with $\Vert u\Vert \leq R$, we have $u_{\eps}^{x}\in U_{x}$. Using \eqref{c}, there is a constant $c$ such that for all $\eps<\min (\eps_{0}, \eps_{1})$ and for all $u \in \R^{n}$  with $\Vert u \Vert \leq R$
\[
\int_{B(0,\rho)}\frac{\eps^{Q-2}}{\vert v_{\eps}^{x}\vert_{u_{\eps}^{x}}^{Q-2}}dv\leq c.
\]

This is the first point of the lemma.
The second point will follow from \eqref{intri} and \eqref{compnormeucli} of Proposition \ref{prop4}.
For $\Vert u \Vert$ large enough such that $\frac{1}{c_{0}}\vert u_{\eps}^{x} \vert_{x}- \vert v_{\eps}^{x} \vert_{x} >0$ we have
\begin{align*}
 \int_{B(0,\rho)}\frac{\eps^{Q-2}}{\vert v_{\eps}^{x}\vert_{u_{\eps}^{x}}^{Q-2}}dv \leq \int_{B(0,\rho)}\frac{\eps^{Q-2}}{\left ( \frac{1}{c_{0}}\vert u_{\eps}^{x} \vert_{x}- \vert v_{\eps}^{x} \vert_{x} \right ) ^{Q-2}} dv 
 &\leq \int_{B(0,\rho)}\frac{1}{\left ( \frac{1}{c_{0}}\vert \varphi^{-1}(u) \vert_{x}- \vert \varphi^{-1}(v) \vert_{x} \right ) ^{Q-2}} dv \\
 &\leq \int_{B(0,\rho)}\frac{1}{\left ( \frac{c'}{c_{0}}\Vert u \Vert- \rho \right ) ^{Q-2}} dv.
\end{align*}
This inequality ensures that the convergence is uniform in $\eps$.
 \end{proof}
 
Let us return to the proof of Fact \ref{fact3}.
 
By the previous lemma we obtain
\begin{align*}
\E_{0}\left [ \ind_{T<\tau_{\eps}} \int_{T}^{\tau_{\eps}} \vert f(v_{t}^{(x,\eps)}) \vert dt \right ] &\leq c \mu_{T}^{(x,\eps)}(B(0,R)) +\underset{\underset{R\to +\infty}{\longrightarrow }0}{\underbrace{\sup_{u, \Vert u \Vert \geq R}  \int_{B(0,\rho)}\frac{C\eps^{Q-2}}{\vert v_{\eps}^{x}\vert^{Q-2}_{u_{\eps}^{x}}}dv}}.
\end{align*}

To end the proof of Fact \ref{fact3} it remains to prove the following fact.

\begin{lem}\label{lem2}
For all $R>0$,
\begin{align*}
\liminf_{T\to +\infty} \limsup_{\eps \to 0} \mu_{T}^{(x,\eps)}(B(0,R))=0.
\end{align*}
\end{lem}
\begin{proof}[Proof of Lemma \ref{lem2}]
By definition of $\mu_{T}^{(x,\eps)}$ we have, \[\mu_{T}^{(x,\eps)}(B(0,R))=\E_{0} \left[ \ind_{T<\tau_{\eps}} \ind_{B(0,R)}(v_{T}^{(x,\eps)}) \right ].\]
Let  $\chi$ be a smooth function which is equals to $1$ on $B(0,R)$ and which is supported on $B(0,R+1)$. We have
\[
\mu_{T}^{(x,\eps)}(B(0,R)) \leq \E_{0} \left[ \ind_{T<\tau_{\eps}} \chi(v_{T}^{(x,\eps)})\right].
\]
Using the Taylor expansion of $v_{T}^{(x,\eps)}$, as in the proof of the fact \ref{fact1}, we obtain:
\[
\E_{0} \left[ \ind_{T<\tau_{\eps}} \chi(v_{T}^{(x,\eps)})\right] \underset{\eps \to 0}{\longrightarrow} \E_{0} \left[  \chi(u_{T}^{(x)})\right].
\]
Moreover $ \E_{0} \left[ \chi(u_{T}^{(x)})\right] \leq  \E_{0} \left[ \ind_{B(0,R+1)}(u_{T}^{(x)})\right]$ and  $t\mapsto  \E_{0} \left[ \ind_{B(0,R+1)}(u_{t}^{(x)})\right]$ is a positive bounded function which is integrable according to \eqref{integrabilite}. Thus we obtain \[\liminf_{T\to +\infty} \E_{0} \left[ \ind_{B(0,R+1)}(u_{T}^{(x)})\right]=0.\]
\end{proof}
\end{itemize}
\end{proof}

\textbf{Acknowledgements}. The author wishes to thank J. Franchi and M. Gradinaru for reading this article and the anonymous referee for his comments and suggestions. 
\bibliography{bibli}
\bibliographystyle{plain}
\end{document}